\def\endemo{\hfill\hfill\qed\end{proof}}
\def\tr{\operatorname{tr}}
\def\ad{\operatorname{ad}}
\def\Span{\operatorname{Sp}}
\def\f{\mathfrak f}
\def\g{\mathfrak g}
\def\aa{\mathfrak a}
\def\m{\mathfrak m}
\def\q{\mathfrak q}
\def\V{\mathcal V}
\def\bV{\overline V}
\def\bH{\overline H}
\def\h{\mathfrak h}
\def\R{\mathbb R}
\def\vp{V^{\perp}}
\def\h{\mathfrak h}
\def\m{\mathfrak m}
\theoremstyle{plain}
\newtheorem{theorem}{Theorem}
\newtheorem{proposition}{Proposition}
\newtheorem{lemma}{Lemma}
\newtheorem{corollary}{Corollary}
\newtheorem{question}{Question}
\newtheorem{problem}{Problem}
\theoremstyle{definition}
\newtheorem{definition}{Definition}
\theoremstyle{remark}
\newtheorem{remark}{Remark}
\newtheorem{example}{Example}
\begin{document}

\title{Respectful decompositions  of Lie algebras}
\author{Grant Cairns and Yuri Nikolayevsky}
\address{Department of Mathematical and Physical Sciences, La Trobe University, Melbourne, Australia 3086}
\email{G.Cairns@latrobe.edu.au, Y.Nikolayevsky@latrobe.edu.au}

%\keywords foliation,  geodesic, integrability, sub-Riemannian
%\subjclass Primary 57R30 53C12

\begin{abstract}
One of Pierre Molino's principal mathematical achievements was his theory of Riemannian foliations. One of his last papers, published in 2001, showed that his theory could be extended to a large class of non-integrable distributions. The key example here is that of a \emph{respectful decomposition} of a Lie algebra $\g$; this is vector space decomposition $\g=H+V$ such that $[V,H]\subseteq H$. This paper will examine the basic properties of respectful decompositions.
\end{abstract}

\maketitle

\vskip-0.2in
%\hsize=6truein
\centerline{
\emph{Dedicated with affection  to the memory of Pierre Molino\footnote{This is an expanded version of a talk given by the first author at the \emph{Journ\'ees de G\'eom\'etrie Diff\'erentielle
en m\'emoire de Pierre Molino}, held on 16--17 June 2022}}
}
\vskip-0.5in

%\hsize=6truein

\section{Introduction}

It was shown in \cite{CM} that Molino's theory of Riemannian foliations extends  to a large class of non-integrable distributions. In the general case, a very similar procedure is pursued to the Riemannian case. We will not go into this construction here; details can be found in \cite{CM}. However the basic idea is to first lift to the transverse frame bundle, and then restrict to the orbit closures. In the Riemannian foliation case this ultimately reduces to the study of Lie foliations, in the sense of Fedida \cite{F}. In the general case, one is lead to the similar situation, the simplest form of which is provided by the following simple homogeneous structure.

\begin{definition} An ordered pair $(H,V)$ of complementary vector subspaces   of a Lie algebra $\g$ are said to form a \emph{respectful decomposition} of $\g$ if $[V,H]\subseteq H$.
In this case, we say that $V$ \emph{respects} $H$.
\end{definition}

On a Lie group $G$ corresponding to $\g$, we identify $H$ and $V$ with left-invariant distributions on $G$. Suppose  $V$ respects $H$ and consider a left-invariant Riemannian metric for which $H$ is orthogonal to $V$. 
When $H$ is a subalgebra, $H$ defines a Riemannian foliation on  $G$; in fact, $H$ defines a transversally parallelizable foliation \cite{M}.
When $V$ is a subalgebra, $V$ defines a totally geodesic foliation on $G$; in fact, $V$ defines a tangentially parallelizable foliation \cite{C}.
In the general case, even when $V$ is not a subalgebra,  $V$ is totally geodesic in the sense that every geodesic of $G$ tangent to $V$ at one point will remain everywhere tangent to $V$  \cite{CM}. For a nice account of totally geodesic non-integrable distributions, see \cite{ML}.

Note that when $H$ is a subalgebra, the \emph{respectful decomposition} hypothesis implies that $H$ is an ideal. Conversely, if $H$ is an ideal, then 
$V$  respects $H$ for every choice of complementary subspace $V$. So for respectful decompositions, the case where $H$ is a subalgebra is well understood.

In the case where $V$ is a subalgebra, the situation is nontrivial. There are several known results, especially in the nilpotent case, and there are several open problems. We will survey the situation in Section~\ref{S:tg}.
The main focus of this paper is to make a preliminary examination of the case where $\g$ is nilpotent and neither $H$ nor $V$ is a subalgebra. 

\begin{definition}
Suppose that $(H,V)$ is a respectful decomposition of a Lie algebra $\g$. We say that $(H,V)$ is \emph{open} if neither $H$ nor $V$ is a Lie subalgebra of $\g$.
\end{definition}

If $(H,V)$ is open, then obviously $\dim V\ge 2$.
The following theorem shows that the condition for a nilpotent Lie algebra $\g$ to admit an open respectful decomposition $(H, V)$ with $\dim V = 2$ can be expressed solely in terms of $V$.

\begin{theorem}\label{T:v2} Suppose that a nilpotent Lie algebra $\g$ has an open respectful decomposition $(H,V)$ with $\dim V = 2$. Then the following conditions hold:
\begin{enumerate}
\item \label{it:VV}
$[V,V] \ne 0$, 
\item \label{it:VVg} 
$V \cap [V,\g] =0$,
\item \label{it:Vg} 
$[V,\g] \ne [\g,\g]$.
\end{enumerate}
Conversely, given a subspace $V \subseteq \g$ of dimension 2 that satisfies \eqref{it:VV}, \eqref{it:VVg} and \eqref{it:Vg}, there exists $H \subseteq \g$ such that $(H,V)$ is an open respectful decomposition of $\g$.
\end{theorem}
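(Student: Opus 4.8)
The plan is to treat the two directions separately. Everything in the forward direction rests on a single observation: a respectful decomposition of a nilpotent $\g$ with $\dim V=2$ automatically satisfies $[V,\g]\subseteq H$.

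\emph{Forward direction.} Condition \eqref{it:VV} is immediate, since $[V,V]=0$ would make $V$ an abelian subalgebra, contrary to openness. To obtain $[V,V]\subseteq H$, observe that the respectful condition $[V,H]\subseteq H$ says exactly that $\ad_x$ preserves $H$ for each $x\in V$, so $\ad_x$ descends to an operator $\bar\ad_x$ on $\g/H$, which we identify with $V$ through the projection $\pi_V$ of $\g$ onto $V$ along $H$. Since $\g$ is nilpotent, every $\ad_x$, hence every $\bar\ad_x$, is a nilpotent operator. Fix a basis $e_1,e_2$ of $V$ and put $w=[e_1,e_2]$, so that $[V,V]$ is spanned by $w$ (this is where $\dim V=2$ is used). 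Under the identification, $\bar\ad_{e_1}$ acts on $V$ by $v\mapsto\pi_V([e_1,v])$; its matrix in the basis $e_1,e_2$ has first column zero, so its trace equals the $e_2$-component of $\pi_V(w)$, and nilpotency forces that component to vanish. Symmetrically, nilpotency of $\bar\ad_{e_2}$ kills the $e_1$-component of $\pi_V(w)$. Hence $\pi_V(w)=0$, i.e.\ $w\in H$ and $[V,V]\subseteq H$; combined with $[V,H]\subseteq H$ this gives $[V,\g]\subseteq H$. Now \eqref{it:VVg} follows at once from $V\cap[V,\g]\subseteq V\cap H=0$, and \eqref{it:Vg} follows too: if $[V,\g]=[\g,\g]$, then $[\g,\g]\subseteq H$, hence $[H,H]\subseteq H$ and $H$ is a subalgebra, contradicting openness.

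\emph{Converse.} Assume $\dim V=2$ and \eqref{it:VV}--\eqref{it:Vg}. By \eqref{it:VVg} the sum $[V,\g]\oplus V$ is direct, so for any vector space complement $W$ of $[V,\g]\oplus V$ in $\g$ the subspace $H:=[V,\g]\oplus W$ satisfies $\g=H\oplus V$ and $[V,H]\subseteq[V,\g]\subseteq H$; thus $(H,V)$ is a respectful decomposition for every such $W$. That $V$ is not a subalgebra is automatic: $[V,V]\subseteq[V,\g]$ meets $V$ trivially by \eqref{it:VVg} and is nonzero by \eqref{it:VV}, so $[V,V]\not\subseteq V$. It remains to choose $W$ so that $H$ is not a subalgebra. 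One has $[\g,\g]=[H,H]+[V,\g]$ for any complement of $V$, so, since $[V,\g]\subseteq H$ here, $H$ is a subalgebra if and only if $[\g,\g]\subseteq H$. By \eqref{it:Vg} choose $z\in[\g,\g]\setminus[V,\g]$; in $\bar\g:=\g/[V,\g]$ its image $\bar z$ is nonzero and $\bar V$ is a two-dimensional, in particular nonzero, subspace, and a routine linear-algebra argument produces a complement of $\bar V$ in $\bar\g$ avoiding $\bar z$. Pulling this complement back to the choice of $W$ yields $H$ with $z\notin H$, hence $[\g,\g]\not\subseteq H$, so $H$ is not a subalgebra and $(H,V)$ is open.

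\emph{Main obstacle.} The only genuinely substantive step is the inclusion $[V,\g]\subseteq H$ in the forward direction; this is where nilpotency is indispensable and where $\dim V=2$ enters, through the structure of nilpotent operators on a two-dimensional space. The conditions \eqref{it:VV}, \eqref{it:VVg}, \eqref{it:Vg} themselves, and the entire converse, are then bookkeeping, the one mild point being the linear-algebra lemma on separating a vector from a subspace complement.
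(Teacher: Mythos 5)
Your proof is correct and follows essentially the same route as the paper's: the key step $[V,\g]\subseteq H$ comes from nilpotency of the induced operators on $V\cong\g/H$ (the paper packages this as ``$\V$ is nilpotent, hence Abelian in dimension $2$,'' you reprove it via traces), and the converse builds $H$ as the preimage of a complement of $\pi(V)$ in $\g/[V,\g]$ avoiding the image of $[\g,\g]$ (the paper via a Grassmannian genericity argument, you via a direct construction). The differences are cosmetic.
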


As we show in Theorem~\ref{T:dim>=6}, no nilpotent Lie algebra of dimension at most 5 admits an open respectful decomposition. For algebras of dimension 6 we have the following result. 
 
\begin{theorem} \label{T:dimV=2}
Up to isomorphism, there are exactly 17 nilpotent Lie algebras of dimension 6 that admit an open respectful decomposition with $(H,V)$ with $\dim V =2, \, \dim H=4$.
\end{theorem}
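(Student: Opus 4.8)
The strategy is to combine Theorem~\ref{T:v2} with the classification of 6-dimensional nilpotent Lie algebras. First I would recall the standard list of nilpotent Lie algebras of dimension 6 (there are, depending on the field and on whether one counts the one-parameter family, around 24--26 of them in characteristic 0; I would fix a reference such as de Graaf's classification and adopt its notation). By Theorem~\ref{T:v2}, $\g$ admits an open respectful decomposition with $\dim V=2$, $\dim H=4$ if and only if $\g$ contains a 2-dimensional subspace $V$ satisfying conditions \eqref{it:VV}, \eqref{it:VVg}, \eqref{it:Vg}; so the problem reduces entirely to deciding, for each algebra on the list, whether such a $V$ exists. I would go through the list algebra by algebra. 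For each $\g$, condition \eqref{it:Vg} requires $[V,\g]\subsetneq[\g,\g]$; since $V$ is 2-dimensional, $[V,\g]$ has dimension at most $2\cdot(\dim\g - \dim Z(\g))$-ish but more usefully, we need a 2-plane $V$ whose bracket with all of $\g$ misses some element of $[\g,\g]$. Combined with \eqref{it:VV} (so $V$ is not abelian, meaning $[v_1,v_2]\neq 0$ for a basis $v_1,v_2$ of $V$) and \eqref{it:VVg} (so $V\cap[V,\g]=0$, which in particular forces $[v_1,v_2]\notin V$), this is a concrete condition on the structure constants.

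The bulk of the work is the case analysis. I would organize it by the dimension of the derived algebra $[\g,\g]$ and by the nilpotency class. Algebras with $[\g,\g]$ small (e.g. $\dim[\g,\g]=1$, which forces $\g$ to be a Heisenberg algebra plus an abelian factor) are easy to dispose of: if $\dim[\g,\g]=1$ then $[\g,\g]$ is spanned by a single vector $z$, and \eqref{it:Vg} demands $[V,\g]=0$, i.e. $V\subseteq Z(\g)$, which contradicts \eqref{it:VV}; so no such algebra qualifies. At the other extreme, algebras of high nilpotency class (the filiform algebras of dimension 6, class 5) should typically fail \eqref{it:Vg} because in a filiform algebra $[V,\g]$ tends to fill up most of $[\g,\g]$ whenever $V$ is not central; one checks this directly from the normal form. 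The interesting algebras are the "middle" ones — class 2 and class 3 with $\dim[\g,\g]\in\{2,3\}$ — and for these I would exhibit an explicit $V=\Span\{v_1,v_2\}$ in the given basis and verify \eqref{it:VV}--\eqref{it:VVg}--\eqref{it:Vg} by direct computation with the structure constants. Where an algebra fails, I would give the argument (typically: any 2-plane satisfying \eqref{it:VV} and \eqref{it:VVg} is forced, by the form of the bracket, to have $[V,\g]=[\g,\g]$, violating \eqref{it:Vg}).

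The main obstacle is bookkeeping and completeness: one must be certain that every algebra on the classification list has been examined, that the "non-existence" arguments really are exhaustive over all 2-planes $V$ (not just the obvious coordinate ones — this is where a careless argument could miss a qualifying $V$ or, conversely, fail to rule one out), and that the 17 algebras found are pairwise non-isomorphic (which follows from them being pairwise non-isomorphic on the original list, so this is free once we trust the classification). A secondary subtlety is the one-parameter family $\g$ among the 6-dimensional nilpotents: one must determine for which parameter values \eqref{it:VV}--\eqref{it:Vg} can be satisfied, and whether that contributes one isomorphism type, finitely many, or a whole subfamily to the count of 17. I would handle the non-existence direction by the following uniform reduction where possible: fix $v_1,v_2$ with $z:=[v_1,v_2]\neq 0$; condition \eqref{it:VVg} says $z\notin V$ and more; then analyze $\ad_{v_1},\ad_{v_2}$ as maps $\g\to[\g,\g]$ and show their combined image is all of $[\g,\g]$. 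Only after this reduction fails for a given $\g$ does one get a candidate, which is then verified explicitly. A table summarizing, for each of the $\sim 24$ algebras, whether it qualifies and (if so) an explicit witnessing $V$, would constitute the proof; the final tally is 17.
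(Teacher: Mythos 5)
Your plan is exactly the paper's proof: reduce via Theorem~\ref{T:v2} to the existence of a 2-dimensional subspace $V$ satisfying conditions \eqref{it:VV}--\eqref{it:Vg}, then run through de Graaf's classification, exhibiting an explicit $V$ for the 17 qualifying algebras and ruling out the rest (the paper disposes of the easy failures via the $\dim[\g,\g]\le 1$ and codimension-one-Abelian-ideal criteria, and checks the remaining handful by the same kind of exhaustive analysis over 2-planes that you describe, including the parameter families $L_{6,22}(\epsilon)$ and $L_{6,24}(\epsilon)$). The approach is correct and essentially identical.
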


Explicit details of the 17 algebras of Theorem~\ref{T:dimV=2} are given in Section~\ref{S:dimH4}; see Table~\ref{TableV2}.

If $(H,V)$ is open, then obviously $\dim H\ge 2$. In fact, we see in Lemma~\ref{L:JI}\eqref{L:H} that the case $\dim H= 2$ is impossible for open respectful decompositions.  Together with Theorem~\ref{T:dimV=2}, the following result gives a complete classification of nilpotent Lie algebras of dimension 6 admitting an open respectful decomposition.

\begin{theorem}\label{T:main}
Suppose that is a 6-dimensional nilpotent Lie algebra $\g$ possesses an open respectful decomposition $(H,V)$ with $\dim H=3$. Then the following conditions hold:
\begin{enumerate}
\item $\g$ doesn't have a codimension one Abelian ideal,
\item the center $Z(\g)$ of $\g$  has dimension $\le 2$,
\item the vector space sum $[\g,\g]+Z(\g)$ has dimension $\le 3$,
\item if $\dim Z(\g)=1$ and $\dim [\g,\g]=3$, then $[\g,[\g,[\g,\g]]]\not=0$.
\end{enumerate}
Up to isomorphism, there are exactly 14 nilpotent Lie algebras of dimension 6 satisfying the above four conditions. Conversely, each of these  14  algebras possesses an open respectful decomposition $(H,V)$ with $\dim H=3$.
\end{theorem}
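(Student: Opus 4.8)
The plan is to attack Theorem~\ref{T:main} by combining three ingredients: general structural constraints that any open respectful decomposition forces on a nilpotent $\g$ (which will yield the four necessary conditions), a finite enumeration of 6-dimensional nilpotent Lie algebras satisfying those conditions (using a standard classification list, e.g.\ de Graaf's or Magnin's tables), and explicit constructions of $(H,V)$ with $\dim H=3$ for the algebras that survive.

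First I would establish the four necessary conditions directly from the hypothesis that $(H,V)$ is open with $\dim H=3$, $\dim V=3$. The tool here is the identity $[V,H]\subseteq H$ together with the Jacobi identity; I expect to reuse the auxiliary results referenced in the excerpt (the lemma ``\texttt{L:JI}'' on small $\dim H$, and whatever is proved about $[V,\g]$, $[V,V]$ and the center). For (1), if $\g$ had a codimension-one abelian ideal $A$, then $H\cap A$ has dimension $\ge 2$ and one can show $V$ would have to normalize things so rigidly that $V$ or $H$ becomes a subalgebra, contradicting openness. For (2) and (3), the point is that $[V,H]\subseteq H$ forces $[V,\g]$ and $[\g,\g]$ to interact with $H$ in a controlled way; since $\dim H=3$ and $H$ is not a subalgebra, $[H,H]$ must stick out of $H$, and because $\g$ is nilpotent the descending central series must ``fit'' inside the $3$-dimensional complement, which bounds $\dim([\g,\g]+Z(\g))$ and $\dim Z(\g)$. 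Condition (4) is the most delicate: it rules out the borderline case $\dim Z(\g)=1$, $\dim[\g,\g]=3$ with $[\g,[\g,[\g,\g]]]=0$, i.e.\ a $2$-step-ish obstruction; I would prove it by assuming that triple bracket vanishes and deriving, via Jacobi applied to $V\times V\times H$ and the fact that $[V,V]\neq 0$ lies outside $V$ (from the analogue of Theorem~\ref{T:v2}(b)), that the respectful condition collapses.

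Next, having pinned down the four conditions, I would run through a classification list of the (finitely many, around 30--34) non-isomorphic 6-dimensional nilpotent Lie algebras and check mechanically which ones satisfy all four; this is a bookkeeping step producing the claimed 14 algebras, and I would present the outcome in a table (paralleling Table~\ref{TableV2}) rather than narrating each case. For the converse, for each of the 14 surviving algebras I would exhibit an explicit complementary pair $(H,V)$, $\dim H=3$, in a chosen basis and verify $[V,H]\subseteq H$ and that neither subspace is a subalgebra — again a finite, table-driven verification. A useful organizing device is to group the 14 algebras by the invariants appearing in conditions (2)--(4) (values of $\dim Z(\g)$, $\dim[\g,\g]$, nilpotency class), so that within each group the construction of $H$ and $V$ follows a uniform recipe.

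The main obstacle, I expect, is condition (4) together with the sharpness of the enumeration: showing that exactly those 14 algebras work requires both proving that every algebra failing one of the four conditions admits no such decomposition (the necessity direction, genuinely the hard analytic core) \emph{and} that no algebra on the boundary was miscounted — in particular handling the algebras where $[\g,\g]$ and $Z(\g)$ have the critical dimensions, where a naive dimension count does not immediately decide matters and one must use the finer Jacobi-identity bookkeeping. A secondary but real difficulty is ensuring the construction of $H$ in the converse direction genuinely gives an \emph{open} decomposition: one must check $H$ is not accidentally a subalgebra, which for $\dim H=3$ means verifying $[H,H]\not\subseteq H$, and simultaneously that $V$ (also $3$-dimensional) is not a subalgebra, so the explicit choices must be made with some care rather than taken off the shelf.
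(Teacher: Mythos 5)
Your plan — derive the four conditions as structural consequences of the respectful identity and the Jacobi identity, then run de Graaf's classification of 6-dimensional nilpotent Lie algebras against them to isolate the 14 survivors, and finally exhibit an explicit $(H,V)$ for each in a table — is exactly the architecture of the paper's proof, which obtains (a)--(d) from Lemmas~\ref{L:f4con}, \ref{L:ceng}, \ref{L:der} and \ref{L:trick} and records the decompositions in Table~\ref{TableV3}. The only point where your sketch diverges in mechanism is condition (d): rather than a direct Jacobi computation on $V\times V\times H$, the paper deduces it from the fact that $\dim Z(\g)=1$ forces $\bH=H+V_H\cong\f_4$, whose unique 3-dimensional Abelian ideal $\Span(h_2,h_3,v_3)$ contains $[\g,\g]$ and satisfies $[\bH,\Span(h_2,h_3,v_3)]=[\bH,\bH]\not\subseteq Z(\g)$ — but this is a difference of detail, not of route.
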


Explicit details of the 14 algebras of Theorem~\ref{T:main} are given in Section~\ref{S:dimH3}; see Table~\ref{TableV3}.

\begin{definition}
We say that a decomposition $(H,V)$ of a Lie algebra $\g$ is \emph{mutually respectful} if $V$ respects $H$, and $H$ respects $V$.
\end{definition}

\begin{corollary}\label{cor} If a 6-dimensional nilpotent Lie algebra $\g$ has an open mutually respectful decomposition $(H,V)$, then either $\g\cong \h_3\oplus \h_3$ or $\g\cong \R\oplus \h_5$. 
\end{corollary}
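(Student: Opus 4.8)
\emph{Proof proposal.} The plan is to first observe that mutual respectfulness is equivalent to the much stronger condition $[H,V]=0$: since $[V,H]$ and $[H,V]$ span the same subspace, the hypotheses $[V,H]\subseteq H$ and $[H,V]\subseteq V$ together give $[H,V]\subseteq H\cap V=0$. By Lemma~\ref{L:JI}\eqref{L:H}, together with $\dim H+\dim V=6$ and $\dim H,\dim V\ge2$, we have $\dim H\in\{3,4\}$. I would then pass to the subalgebras $A=\langle H\rangle$ and $B=\langle V\rangle$ generated by $H$ and $V$. A routine Jacobi argument shows that $\{x\in\g:[x,V]=0\}$ is a subalgebra containing $H$, hence $[A,V]=0$, and then likewise $[A,B]=0$. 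Consequently $A+B=\g$, while $C:=A\cap B$ centralises both $A$ and $B$, so $C\subseteq Z(\g)$. Since $H$ and $V$ are not subalgebras, $A\supsetneq H$ and $B\supsetneq V$; and $\dim A=6$ would force $V\subseteq Z(\g)$, contradicting $[V,V]\ne0$, so $\dim A,\dim B\le5$.

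Next I would rule out $\dim H=4$. Then $\dim A=5$, so $\dim(A+B)=6$ gives $\dim C=\dim B-1$; as $C\subseteq Z(\g)\cap B\subseteq Z(B)$, the centre of $B$ has codimension at most one, which forces $B$ abelian --- but then $[V,V]=0$, a contradiction. Hence $\dim H=\dim V=3$, and then $\dim A,\dim B\in\{4,5\}$, $\dim C=\dim A+\dim B-6\ge2$, and $\dim(A/C)=6-\dim B\le2$, likewise $\dim(B/C)\le2$. So $A/C$ and $B/C$ are abelian, whence $[A,A],[B,B]\subseteq C$ and therefore $[\g,\g]=[A,A]+[B,B]\subseteq C\subseteq Z(\g)$: \emph{$\g$ is two-step nilpotent}, and $2\le\dim Z(\g)\le4$ (the upper bound because a two-step nilpotent algebra of dimension $6$ with centre of dimension $\ge5$ is abelian, which admits no open decomposition).

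For the remaining analysis I would use that the bracket of the two-step algebra $\g$ descends to a surjective alternating map $\beta\colon\Lambda^2(\g/Z(\g))\to[\g,\g]$ whose scalarisations $\beta_\xi=\xi\circ\beta$ ($\xi\in[\g,\g]^*$) have trivial common radical. Writing $\bar H,\bar V$ for the images of $H,V$, one has $\bar H+\bar V=\g/Z(\g)$, $[H,H]=\beta(\Lambda^2\bar H)$ (and similarly for $V$), $\dim\bar H,\dim\bar V\ge2$ (because $[H,H],[V,V]\ne0$) and $\beta(\bar H,\bar V)=0$. If $\dim Z(\g)=4$, then $\bar H=\bar V=\g/Z(\g)$ (a $2$-space), so $\beta(\bar H,\bar V)=[\g,\g]\ne0$, a contradiction. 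If $\dim Z(\g)=3$, then at least one of $\bar H,\bar V$, say $\bar H$, is $2$-dimensional; its bracket spans a line, and $\beta(\bar H,\bar V)=0$ confines $\bar V$ to the intersection of the two $\beta$-perpendiculars of a basis of $\bar H$, a subspace of dimension $\le1$ --- again impossible. So $\dim Z(\g)=2$. If $\dim[\g,\g]=1$, then $\beta$ is, up to scalar, a single nondegenerate skew form on the $4$-space $\g/Z(\g)$, and $\g\cong\R\oplus\h_5$. If $\dim[\g,\g]=2$, a short argument shows $[H,H]$ and $[V,V]$ must be distinct lines $L_1,L_2$ with $L_1\oplus L_2=[\g,\g]$; taking $\xi_1,\xi_2$ with $\ker\xi_1=L_2$, $\ker\xi_2=L_1$, the nonzero skew forms $\beta_{\xi_1},\beta_{\xi_2}$ satisfy $\beta_{\xi_1}|_{\bar V}=0=\beta_{\xi_2}|_{\bar H}$ and $\beta_{\xi_i}(\bar H,\bar V)=0$, and chasing their ranks (each $2$ or $4$) forces $\g/Z(\g)=\bar H_0\oplus\bar V_0$ with $\beta$ block-diagonal and nondegenerate on each summand, so $\g\cong\h_3\oplus\h_3$. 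Conversely, $\h_3\oplus\h_3$ admits the open mutually respectful decomposition $H=\Span(e_1,e_2,f_3)$, $V=\Span(f_1,f_2,e_3)$ (with $[e_1,e_2]=e_3$, $[f_1,f_2]=f_3$), and $\R\oplus\h_5$ admits an analogous ``crossed'' one.

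The main obstacle will be the last case, $\dim Z(\g)=2$ with $\dim[\g,\g]=2$: ruling out the other $6$-dimensional two-step nilpotent algebras with $2$-dimensional centre and commutator requires careful bookkeeping with the pencil $\{\beta_\xi\}$ of skew forms on $\g/Z(\g)$ and its rank patterns. A shorter, less self-contained alternative is to invoke Theorem~\ref{T:main} once $\dim H=3$ is reached: $\g$ is then one of the $14$ algebras of Table~\ref{TableV3}, and one checks directly which of its two-step members admit a complementary pair with $[H,V]=0$, the survivors being exactly $\h_3\oplus\h_3$ and $\R\oplus\h_5$.
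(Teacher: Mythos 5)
Your opening observation --- that mutual respectfulness forces $[H,V]\subseteq H\cap V=0$ --- is exactly the paper's starting point, but from there the two arguments diverge. The paper's proof is a few lines long because it recycles the machinery of Section~\ref{S:dimH3}: applying Lemma~\ref{L:JI}\eqref{L:H} to both $(H,V)$ and $(V,H)$ gives $\dim H=\dim V=3$ immediately (you apply it only to $H$ and then spend a paragraph eliminating $\dim H=4$); Lemma~\ref{L:VHV6} with the roles of $H$ and $V$ swapped gives $\bH\cong\R\oplus\h_3$, Lemma~\ref{L:HRh} then gives $Z(\g)=\Span(h_3,v_3)$, and since $[H,V]=0$ the only surviving relations are $[h_1,h_2]=ah_3+v_3$ and $[v_1,v_2]=h_3+bv_3$, giving $\R\oplus\h_5$ when $ab=1$ and $\h_3\oplus\h_3$ otherwise. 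Your route --- passing to the subalgebras $A=\langle H\rangle$, $B=\langle V\rangle$, showing $[A,B]=0$ and $A\cap B\subseteq Z(\g)$, deducing that $\g$ is two-step nilpotent, and then analysing the induced alternating map $\beta$ on $\g/Z(\g)$ --- is essentially self-contained and would work for the corollary without most of Section~\ref{S:dimH3}; that is a genuine structural alternative, and the centraliser argument is clean and correct.

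However, as written the proposal is not complete, and you say so yourself: the case $\dim Z(\g)=2$, $\dim[\g,\g]=2$ is left as ``careful bookkeeping'', and in the case $\dim Z(\g)=3$ you assert without justification that the common $\beta$-perpendicular of a basis of $\bar H$ has dimension at most $1$, which is not true for an arbitrary alternating $\beta$ with values in a space of dimension greater than $1$. Both gaps close with one observation you never quite make: if $u\in\bar H\cap\bar V$, then $\beta(u,\g/Z(\g))=\beta(u,\bar H)+\beta(u,\bar V)\subseteq\beta(\bar V,\bar H)+\beta(\bar H,\bar V)=0$, so $u$ lies in the radical of $\beta$, which is trivial; hence $\bar H\cap\bar V=0$. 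Since $[H,H]\ne0$ and $[V,V]\ne0$ force $\dim\bar H,\dim\bar V\ge2$, this kills $\dim Z(\g)=3$ outright ($\dim\g/Z(\g)=3<\dim\bar H+\dim\bar V$), and when $\dim Z(\g)=\dim[\g,\g]=2$ it gives $\g/Z(\g)=\bar H\oplus\bar V$ with both summands $2$-dimensional, $\beta$ block-diagonal, and the two blocks taking values in independent lines (because $[\g,\g]=[H,H]+[V,V]$ is $2$-dimensional); that is precisely $\h_3\oplus\h_3$, with no rank-chasing and no appeal to Theorem~\ref{T:main} needed. With that lemma inserted, your proof is correct.
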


\emph{Notation}. The Lie algebras in this paper are real and finite dimensional.
For a set $\{x_1,\dots,x_n\}$ of elements of a vector space, their linear span is denoted $\Span(x_1,\dots,x_n)$.
We will refer to the \emph{Heisenberg} algebras $\h_{2n+1}$, which have  basis $\{x_i,y_i,z:\ i=1,\dots,n\}$, and relations $[x_i,y_i]=z$ for $i=1,\dots,n$. 
We also use the 4-dimensional \emph{filiform} algebra
$\f_4$ which has  basis $\{x_1,\dots,x_4\}$, and relations $[x_1,x_2]=x_{3},\ [x_1,x_3]=x_{4}$.

\section{The case where $V$ respects $H$ and $V$ is a subalgebra}\label{S:tg}

In this section we consider the case where $(H,V)$ is a respectful decomposition of Lie algebra $\g$ and $V$ is a subalgebra of  $\g$.
The particular case where $\dim V=1$, say $V=\Span(v)$, has naturally received considerable attention. Here $v$ determines a geodesic vector field on the Lie group $G$ corresponding to $\g$, for any left-invariant Riemannian metric for which $H$ is orthogonal to $V$.
Such an element $v$ is also called a \emph{homogeneous geodesic} in the literature. Note that 
for such an element, the orbit through the identity element is a geodesic and at the same time a subgroup of $G$. Let us fix some terminology. Recall that a \emph{metric Lie algebra} is a Lie algebra 
$\g$ equipped with an inner product $\langle,\rangle$. 

\begin{definition} For a metric Lie algebra $(\g,\langle,\rangle)$, a non-trivial element $v$  of $\g$ is called a \emph{geodesic}  if the span $V:=\Span(v)$ of $v$ respects the orthogonal complement $\vp$ of $V$.
\end{definition}

One has the following basic result.

\begin{lemma}[\cite{CHGN}]\label{L:geo}
If $\g$ is a  Lie algebra and $Y\in \g$ is nonzero, then there is an inner product $\langle,\rangle$ on $\g$ for which $Y$ is a geodesic if and only if there does not exist $X\in\g$ with $[X,Y]=Y$.
\end{lemma}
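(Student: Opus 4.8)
The plan is to reformulate the ``geodesic'' condition as a single linear-algebraic orthogonality statement, after which both implications become short. First I would unwind the definition: with $V=\Span(Y)$ and $\vp$ its orthogonal complement, saying that $Y$ is a geodesic means $[\vp,V]\subseteq\vp$, i.e.\ $\langle[Y,W],Y\rangle=0$ for every $W\perp Y$. Since $[Y,Y]=0$, writing an arbitrary $w\in\g$ as $w=W+tY$ with $W\perp Y$ gives $[Y,w]=[Y,W]$, so this is equivalent to $\langle[Y,w],Y\rangle=0$ for \emph{all} $w\in\g$. Thus $Y$ is a geodesic for $\langle,\rangle$ exactly when $Y$ is orthogonal to the linear subspace $[Y,\g]=\im(\ad Y)$. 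I would also record the elementary equivalence: there exists $X\in\g$ with $[X,Y]=Y$ if and only if $Y\in[Y,\g]$ (if $[Y,Z]=Y$ then $X=-Z$ works, and conversely $[X,Y]=Y$ gives $Y=-\ad Y(X)\in[Y,\g]$).

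For the ``only if'' direction, suppose some inner product makes $Y$ a geodesic and that there is an $X$ with $[X,Y]=Y$. Applying the reformulation with $w=-X$ gives $\langle Y,Y\rangle=\langle[Y,-X],Y\rangle=0$, which contradicts $Y\ne0$. Hence no such inner product can exist.

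For the ``if'' direction, assume no $X$ satisfies $[X,Y]=Y$, so by the observation above $Y\notin[Y,\g]$ and therefore $\Span(Y)\cap[Y,\g]=0$. I would then pick a vector-space complement $C$ of $[Y,\g]$ in $\g$ with $Y\in C$ (possible precisely because $\Span(Y)$ meets $[Y,\g]$ trivially), declare the decomposition $\g=C\oplus[Y,\g]$ to be orthogonal, and place any inner products on the two summands; the resulting form is a genuine inner product on $\g$. Since $Y\in C$ is then orthogonal to $[Y,\g]$, the reformulation shows $Y$ is a geodesic. The only points needing a little care are the passage between the $\vp$-restricted form of the geodesic condition and the global one, and the fact that $[Y,\g]$ is a bona fide subspace so that choosing a complement through $Y$ makes sense; neither is a serious obstacle, so the main content of the lemma is really just the translation in the first paragraph.
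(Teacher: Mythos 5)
Your proof is correct and complete. Note that the paper itself offers no proof of this lemma --- it is quoted from \cite{CHGN} --- so there is nothing internal to compare against; your argument (reformulating the geodesic condition as $Y\perp[Y,\g]$, observing that the existence of $X$ with $[X,Y]=Y$ is equivalent to $Y\in[Y,\g]$, and then either deriving $\langle Y,Y\rangle=0$ or building an inner product from a complement of $[Y,\g]$ through $Y$) is the standard one and matches the proof in the cited reference. All the small steps you flag as needing care --- the passage from the $\vp$-restricted condition to the global one via $[Y,Y]=0$, and the fact that $Y\notin[Y,\g]$ forces $\Span(Y)\cap[Y,\g]=0$ because $[Y,\g]$ is a subspace --- check out.
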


 The existence of geodesics for a given inner product was proven by Ka{\u\i}zer; see \cite{Ka,KS,Du}.

\begin{theorem}[Ka{\u\i}zer]\label{T:somegeo}
Every metric Lie algebra possesses a geodesic. 
\end{theorem}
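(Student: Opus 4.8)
The plan is to recast ``$v$ is a geodesic'' as the vanishing of a section of a vector bundle over real projective space, and to invoke a Stiefel--Whitney class to force that vanishing.

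\emph{Step 1 (reformulation).} Fix the inner product $\langle\cdot,\cdot\rangle$ on $\g$ and put $n=\dim\g$. Unwinding the definitions, a nonzero $v\in\g$ is a geodesic exactly when $\langle [w,v],v\rangle=0$ for all $w\perp v$; since $\langle[v,v],v\rangle=0$ trivially, this is equivalent to $\langle[w,v],v\rangle=0$ for all $w\in\g$, i.e.\ to $\mu(v):=\ad_v^{*}v=0$, where $\ad_v^{*}$ is the metric transpose of $\ad_v$. Two elementary features of the self-map $\mu\colon\g\to\g$ drive the argument: $\mu$ is quadratic, so $\mu(-v)=\mu(v)$; and $\langle\mu(v),v\rangle=\langle v,[v,v]\rangle=0$, so $\mu(v)\in v^{\perp}$ for every $v$. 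It therefore suffices to produce a single nonzero $v$ with $\mu(v)=0$.

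\emph{Step 2 (a section over $\mathbb{RP}^{n-1}$).} Let $\mathcal H\to\mathbb{RP}^{n-1}$ be the rank-$(n-1)$ real vector bundle whose fibre over a line $[v]$ is its orthogonal complement $v^{\perp}\subseteq\g$ (well defined since $v^{\perp}=(-v)^{\perp}$). Because $\mu$ is quadratic and takes values in $v^{\perp}$, the assignment $[v]\mapsto\mu(v)$ is a well-defined continuous section $\bar\mu$ of $\mathcal H$, and a zero of $\bar\mu$ is precisely a geodesic direction. The choice of $\mathbb{RP}^{n-1}$ rather than the unit sphere $S^{n-1}$ is essential: on $S^{n-1}$ the map $\mu$ is a tangent vector field, which must vanish when $n$ is odd (hairy ball theorem) but need not vanish when $n$ is even; only after descending to $\mathbb{RP}^{n-1}$ does the obstruction become nontrivial in all dimensions.

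\emph{Step 3 (the obstruction) and conclusion.} By construction $\mathcal H\oplus\gamma$ is the trivial bundle $\g\times\mathbb{RP}^{n-1}$, where $\gamma$ is the tautological line bundle, so $w(\mathcal H)\,w(\gamma)=1$ in $H^{*}(\mathbb{RP}^{n-1};\mathbb Z/2)$. With $a:=w_1(\gamma)$ the generator of $H^{1}(\mathbb{RP}^{n-1};\mathbb Z/2)$ this gives $w(\mathcal H)=(1+a)^{-1}=1+a+\dots+a^{n-1}$, hence $w_{n-1}(\mathcal H)=a^{n-1}\neq0$. A rank-$(n-1)$ bundle over an $(n-1)$-complex with nonzero top Stiefel--Whitney class admits no nowhere-zero section (such a section would split off a trivial line summand, forcing $w_{n-1}=0$). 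Consequently $\bar\mu$ vanishes at some line, and any nonzero vector on that line is a geodesic. The substance of the argument --- and the step I expect to be the only real obstacle --- is Step 2: recognising that the quadratic nature of $v\mapsto\ad_v^{*}v$ lets the problem descend from $S^{n-1}$ to $\mathbb{RP}^{n-1}$, where (unlike on the sphere) the obstruction is always nonzero; Step 1 is a one-line unwinding of definitions and Step 3 is a standard Stiefel--Whitney computation for bundles over $\mathbb{RP}^{n-1}$.
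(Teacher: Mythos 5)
Your argument is correct, but note that the paper itself offers no proof of Theorem~\ref{T:somegeo}: it is quoted from the literature with citations to Ka{\u\i}zer, Kowalski--Szenthe and Du\v{s}ek, so there is no in-paper argument to compare against. Your Step 1 is right: with the paper's definition, $v$ is a geodesic iff $[v,v^{\perp}]\subseteq v^{\perp}$, i.e.\ $\langle [w,v],v\rangle=0$ for all $w\perp v$, and since $\langle[v,v],v\rangle=0$ this is exactly $\ad_v^{*}v=0$; the two structural facts ($\mu(\lambda v)=\lambda^{2}\mu(v)$ and $\mu(v)\in v^{\perp}$) follow immediately. The Stiefel--Whitney computation in Step 3 is the standard one: $w(\mathcal H)=(1+a)^{-1}$ gives $w_{n-1}(\mathcal H)=a^{n-1}\neq 0$, and a nowhere-zero section would split off a trivial line summand and kill the top class, so $\bar\mu$ must vanish. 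The one point you should state more carefully is the well-definedness in Step 2: because $\mu$ is homogeneous of degree $2$ rather than $0$, the assignment $[v]\mapsto\mu(v)$ depends on the choice of representative up to the positive factor $\lambda^{2}$; you should either fix unit representatives (the evenness $\mu(-v)=\mu(v)$ is then exactly what makes the descent from $S^{n-1}$ to $\mathbb{RP}^{n-1}$ work) or observe that $\mu$ naturally defines a section of $\mathcal H\otimes\gamma^{\otimes 2}\cong\mathcal H$. Either fix is one line and does not affect the conclusion, since the zero locus is scale-invariant. Your observation that the relevant bundle is $\mathcal H=\gamma^{\perp}$ rather than $T\mathbb{RP}^{n-1}\cong\gamma\otimes\mathcal H$ --- which is why the obstruction survives in even dimensions where the hairy ball theorem gives nothing --- is exactly the right point and is where a naive sphere-based attempt would fail. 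For comparison, the published proofs you would find via the paper's references argue differently in flavour (e.g.\ by separating the case $[\g,\g]\neq\g$, where any nonzero vector orthogonal to the derived algebra is automatically geodesic, from the remaining case, which is handled by an algebraic/variational argument); your proof is uniform, self-contained, and a perfectly acceptable alternative.
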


The existence of geodesics involves a number of interesting subtleties.
 It is easy to see that if a  Lie algebra $\g$ possesses an inner product with an orthonormal geodesic basis, then $\g$ is necessarily unimodular.
All semisimple Lie algebras have an inner product  with an orthonormal geodesic basis \cite{KS}, and so too do all nilpotent Lie algebras \cite{CLNN}. In \cite{CLNN}, it was proved  that for every unimodular Lie algebra of dimension $\leq 4$, every inner product  has an orthonormal geodesic basis, and an example was given  of  a 5-dimensional unimodular Lie algebra that has no orthonormal geodesic basis for any inner product; nevertheless this algebra does have a (nonorthonormal) geodesic basis for a certain inner product.

%\medskip
\begin{question}[\cite{CLNN}] Does every unimodular Lie algebra possess an inner product having a geodesic basis?
\end{question}

%\medskip
\begin{question}[\cite{CHNT}] Which unimodular Lie algebras possess an inner product having an orthonormal geodesic basis?
\end{question}

It is easy to find examples of nonunimodular Lie algebras that possess an inner product having a  geodesic basis, and others that don't.

%\medskip
\begin{question}[\cite{CHNT}] Which nonunimodular Lie algebras possess an inner product having a  geodesic basis?
\end{question}

Here are some of the results of \cite{CHNT}: 
\begin{itemize}
\item  Question 1 is resolved (in the affirmative) for Lie algebras having an Abelian derived algebra,
and for unimodular solvable Lie algebras with an Abelian nilradical, 
\item  Questions 1, 2 and 3 are  resolved for  Lie algebras having a codimension one Abelian ideal, and for  Lie algebras of dimension $2n$ having a codimension one ideal isomorphic to the Heisenberg Lie algebra
of dimension $2n-1$.
\item Question 3 is resolved for  Lie algebras of dimension $\le 4$.
\item Questions 1 and 2 are resolved for  Lie algebras of dimension $\le 5$.
\end{itemize}

That completes our discussion of the geodesic case. Now suppose $(H,V)$ is a respectful decomposition of Lie algebra $\g$ and $V$ is a subalgebra of  $\g$, and $\dim V> 1$. Remarkably little work has been done on this subject. Here is one known result.
Recall that a nilpotent Lie algebra $\g$ of dimension $n$ is said to be {\em filiform} if it possesses an element of maximal nilpotency; that is, there exists $X\in\g$  with $\ad^{n-2}(X)\not=0$, where $\ad(X):\g\to\g$ is the adjoint map $\ad(X)(Y)=[X,Y]$.

\begin{theorem}[{\cite[Theorem 1.16]{CHGN}}]\label{T:fili}
Suppose that $(H,V)$ is a respectful decomposition of a filiform Lie algebra $\g$ and that $V$ is a subalgebra. Then $\dim V\leq \lfloor\dim\g/2\rfloor$.
\end{theorem}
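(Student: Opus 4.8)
Let $\g$ be filiform of dimension $n$, let $X\in\g$ be an element of maximal nilpotency, so that $\{X, \ad(X)Y, \ad^2(X)Y,\dots,\ad^{n-2}(X)Y\}$ is a basis of $\g$ for a suitable $Y$, and write $\g^{(k)}$ for the lower central series, with $\dim\g^{(k)}=n-k$ for $1\le k\le n-1$. The strategy is to play off the filtration $\g\supset\g^{(1)}\supset\g^{(2)}\supset\cdots$ against the subalgebra $V$ using the respectful condition $[V,H]\subseteq H$. First I would record the elementary consequence that, since $\g=H\oplus V$ and $[V,H]\subseteq H$, for any $v\in V$ the adjoint map $\ad(v)$ leaves $H$ invariant, so $\ad(v)$ has a block-triangular form with respect to the splitting; in particular $\ad(v)V\subseteq [V,V]\oplus 0\subseteq V$ (using that $V$ is a subalgebra) while $\ad(v)H\subseteq H$. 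Thus each $\ad(v)$ preserves both summands, and since $\g$ is nilpotent $\ad(v)|_V$ is a nilpotent endomorphism of $V$.

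**Key steps.** The heart of the argument should be a dimension count on how $V$ can sit inside the filtration. I would look at the sequence of subspaces $V\cap\g^{(k)}$. Two facts drive the bound: (i) because $V$ is a nilpotent subalgebra and every element of $V$ acts nilpotently, one can choose $v\in V$ with $\ad(v)|_V$ nilpotent and use $\ad(v)$ to step down the filtration of $V$; (ii) the filiform hypothesis forces the successive quotients $\g^{(k)}/\g^{(k+1)}$ to be one-dimensional for $k\ge 1$ (and $\g/\g^{(1)}$ two-dimensional). The claim $\dim V\le\lfloor n/2\rfloor$ should follow by showing that $V$ can meet "every second step" of the filtration at most: concretely, if $v\in V\setminus\g^{(1)}$ then $\ad(v)$ maps a complement of $V\cap\g^{(1)}$ in $V$ injectively into $V\cap\g^{(1)}\subseteq\g^{(1)}$, but $\ad(v)$ also shifts the filtration down by one degree, so comparing $V$ with $\ad(v)V\subseteq V\cap\g^{(1)}$ and iterating gives roughly $\dim V\le \dim(\ad(v)V)+ (\text{gap})$; tracking the filtration degrees carefully, the even/odd parity of the filtration index produces the floor. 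Alternatively — and this may be cleaner — one argues that $H\supseteq\g^{(1)}$ fails in general, but $[V,\g^{(1)}]\subseteq\g^{(1)}$ combined with $[V,H]\subseteq H$ means $H\cap\g^{(1)}$ is large, and a direct count of $\dim(H\cap\g^{(k)})$ versus $\dim(V\cap\g^{(k)})$ along the one-step-at-a-time filtration yields that $V$ can "gain" a dimension relative to $H$ at most every other step.

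**Main obstacle.** The delicate point is controlling the interaction between $V$ and the two-dimensional bottom quotient $\g/\g^{(1)}$: a priori $V$ could have a large projection to $\g/\g^{(1)}$, or it could lie almost entirely in $\g^{(1)}$, and these extremes need to be handled uniformly to extract the floor function rather than a weaker bound like $\dim V\le n-2$. I expect the argument will hinge on the following trichotomy for $\dim(V\cap\g^{(1)})$: it is $\dim V$, $\dim V-1$, or $\dim V-2$. In the first case $V\subseteq\g^{(1)}$ and one can essentially induct on the filiform-like structure of $\g^{(1)}$ (or use that $\ad(X)$ shifts things and $V$ respects $H$). In the cases where $V$ has a $1$- or $2$-dimensional projection onto the abelianization, one uses a generator $v\in V$ of that projection: $\ad(v)$ has maximal-length Jordan block $n-1$ on all of $\g$ when $v\notin\g^{(1)}$ is a maximal-nilpotency element, and its restriction to $V$ being nilpotent while preserving $H$ forces $V$ to be contained in a single $\ad(v)$-invariant "half" of the flag — essentially $V$ sits inside $\Span(v)\oplus\ad(v)^2V\oplus\cdots$ can only skip, giving $2\dim V\le n$. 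Making this flag/parity bookkeeping rigorous — rather than hand-wavy — is where the real work lies; everything else is the standard filiform dictionary plus the block-triangularity of $\ad(V)$.
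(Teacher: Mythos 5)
First, a point of reference: this paper does not prove Theorem~\ref{T:fili}; it quotes it from \cite[Theorem 1.16]{CHGN}, so there is no in-paper argument to compare yours against. Judged on its own terms, your submission is a strategy outline rather than a proof, and the decisive gap is exactly where you place it yourself: the ``flag/parity bookkeeping'' showing that $V$ can gain a dimension only at every second step of the lower central series \emph{is} the entire content of the theorem, and it is never carried out. The preliminary observations you do establish --- that $\ad(v)$ preserves both $H$ and $V$ for $v\in V$, that $\ad(v)|_V$ is nilpotent, and that the successive quotients of the lower central series are one-dimensional after the first --- are correct but essentially free; the bound $\dim V\leq \lfloor\dim\g/2\rfloor$ does not follow from them without the missing count.

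Moreover, the specific mechanisms you propose for the hard step fail as stated. First, for $v\in V\setminus[\g,\g]$ the map $\ad(v)$ cannot send a complement of $V\cap[\g,\g]$ in $V$ injectively into $V\cap[\g,\g]$: that complement is nonzero and may be chosen to contain $v$, while $\ad(v)v=0$. Second, your trichotomy leans on a $v\in V\setminus[\g,\g]$ having maximal nilpotency, so that $\ad(v)$ has a single Jordan block of size $n-1$; but in a filiform algebra not every element outside the derived algebra has maximal nilpotency (in the model algebra with $[x_1,x_i]=x_{i+1}$ the element $x_2$ lies outside $[\g,\g]$ yet $\ad(x_2)$ has rank one), and nothing forces $V$ to contain such an element --- indeed in the extremal example $V=\Span(x_2,x_4,\dots)$ it does not. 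Third, the case $V\subseteq[\g,\g]$ is waved off by ``inducting on the filiform-like structure of $[\g,\g]$,'' but $[\g,\g]$ need not be filiform, and containment in $[\g,\g]$ alone only gives $\dim V\le n-2$; the stronger bound in this case must exploit $[V,H]\subseteq H$ together with $[V,\g]\subseteq[\g,[\g,\g]]$ and $H\cap V=0$, which your sketch never does. Until these points are repaired and the dimension count is actually executed, this is a plan for a proof, not a proof.
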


In fact, for some algebras, the bound $\dim V\leq \lfloor\dim\g/2\rfloor$ is not attained. The following example can be deduced from \cite[Theorem 1.19]{CHGN}.

\begin{example}\label{E:dim6biz}
For the $6-$dimensional filiform Lie algebra $\g$ with relations
\begin{align*}
[x_1,x_i]&=x_{i+1},\quad {\text{for\ }} i=2,\dots,5,\\
[x_2,x_3]&=-x_6,
\end{align*}
there is no respectful decomposition $(H,V)$ for which $V$ is a subalgebra and $\dim V=3$.
\end{example}

We remark that the respectful decompositions where $V$ is a subalgebra are a special case of a more general structure that has been studied. Suppose that a metric Lie algebra $(\g,\langle,\rangle)$ has a subalgebra $V$. Then $V$ is said to be \emph{totally geodesic} if 
$\langle[h,v_1],v_2\rangle +\langle v_1,[h,v_2]\rangle=0$ for all $v_1,v_2\in V,h\in \vp$. Obviously, $V$ is totally geodesic when $(\vp,V)$ is a respectful decomposition. For results on totally geodesic subalgebras, see \cite{CHGN,CHGN2}.

\section{Open respectful decompositions; basic facts}\label{S:nonint}

Suppose that $(H,V)$ is a respectful decomposition of a Lie algebra $\g$. As we remarked in the Introduction, the case where $H$ is a subalgebra is well understood. In the previous section we surveyed the case where $V$ is a subalgebra. In this section we  suppose that $(H,V)$ is open, i.e., neither $H$ nor $V$ is a subalgebra. 

\begin{lemma}\label{L:f4con}
 $\g$ doesn't have a codimension one Abelian ideal.
\end{lemma}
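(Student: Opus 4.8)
The plan is to argue by contradiction: suppose $(H,V)$ is an open respectful decomposition of $\g$ and that $\g$ has a codimension one Abelian ideal $\aa$. Since $\aa$ is an ideal of codimension one, we may pick $X\in\g\setminus\aa$, so that $\g=\R X\oplus\aa$ as a vector space, and the entire Lie algebra structure is encoded in the single derivation $D:=\ad(X)|_\aa$ of the Abelian algebra $\aa$, which (as $\g$ is a Lie algebra, not necessarily assumed nilpotent here, though it soon will matter) is just a linear endomorphism of $\aa$. The bracket is $[X,a]=Da$ and $[a,a']=0$ for $a,a'\in\aa$. The goal is to show that under these constraints every respectful decomposition is non-open, i.e.\ forces $H$ or $V$ to be a subalgebra.

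First I would analyse where $X$ sits relative to the decomposition $\g=H+V$. Write $X=X_H+X_V$ with $X_H\in H$, $X_V\in V$. The key structural observation is that $[V,V]\subseteq\g$ and $[V,H]\subseteq H$, and since $\aa$ is Abelian, the only source of nonzero brackets is the component of an element along $X$. Concretely, for $v=c_vX+a_v$ and $w=c_wX+a_w$ in $V$ (with $a_v,a_w\in\aa$), one has $[v,w]=c_vDa_w-c_wDa_v\in\aa$; similarly for $h\in H$ of the form $h=c_hX+a_h$, $[v,h]=c_vDa_h-c_hDa_v$, and this must lie in $H$. I would first treat the case $X\in\aa$ is impossible ($X\notin\aa$ by construction), then split according to whether the functional $c\colon\g\to\R$ (the $X$-coordinate) vanishes identically on $V$ or on $H$. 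If $c|_V\equiv 0$ then $V\subseteq\aa$, hence $[V,V]=0$ and $V$ is (trivially) a subalgebra, contradicting openness. So $c$ is nonzero on $V$; by rescaling a basis vector we may assume some $v_0\in V$ has $X$-coordinate $1$, i.e.\ $v_0=X+a_0$. Replacing $X$ by $v_0$ (still a valid complement to $\aa$, still giving a derivation $D'$ conjugate/equal in effect), I can assume $X=v_0\in V$. Then the condition $[V,H]\subseteq H$ becomes: for every $h=c_hX+a_h\in H$, $Da_h=[X,h]+c_h\cdot 0\in H$ — more precisely $[X,h]=Da_h\in H$, and for $v=c_vX+a_v\in V$, $[v,h]=c_vDa_h-c_h Da_v\in H$.

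The heart of the argument is then to show $V$ must be a subalgebra. Let $W:=V\cap\aa$, a subspace of $V$ of codimension $1$ (since $X=v_0\in V$, $V=\R X\oplus W$). For $w\in W$ we have $[X,w]=Dw$, and for $w,w'\in W$, $[w,w']=0$. So $[V,V]=D(W)+[\text{stuff in }\aa]=D(W)$, and $[V,V]\subseteq V$ iff $D(W)\subseteq W$. Meanwhile the respectfulness $[V,H]\subseteq H$: writing $H=H\cap\aa$ possibly plus a one-dimensional piece depending on whether $c|_H\equiv0$. If $c|_H\equiv 0$, then $H\subseteq\aa$ and $\g=\R X+H+W$ with $H\oplus W=\aa$; then $[X,h]=Dh\in H$ for all $h\in H$ means $D(H)\subseteq H$, and since $D$ preserves $H$ and $\aa=H\oplus W$... here nilpotency of $\g$ (hence nilpotency of $D$) is what I expect to invoke: a nilpotent endomorphism preserving $H$ need not preserve a complement $W$, but $[W,H]\subseteq H$ gives $[X\text{-components}]$... actually since $W,H\subseteq\aa$ is Abelian, $[W,H]=0\subseteq H$ automatically, and respectfulness gives no new info, so the constraint is only $D(H)\subseteq H$; but then is $V=\R X\oplus W$ a subalgebra? $[V,V]=D(W)$, which need not be in $W$. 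So openness would survive unless $D(W)\subseteq W$ fails to be forced — meaning I would need to derive a contradiction from the mere existence of such $H$, likely by a dimension count combined with nilpotency forcing a $D$-invariant flag that cannot split off both $H$ and $W$ as claimed. The remaining case $c|_H\not\equiv0$: pick $h_0=X+b_0\in H$; then $v_0-h_0=a_0-b_0\in V\cap\aa$... and $[v_0-h_0, H]\subseteq H$ combined with $[v_0,H]$, $[h_0,H]$... I would chase these to pin down $D$.

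The step I expect to be the main obstacle is precisely this last bookkeeping: showing that the combination of (i) $D$ nilpotent, (ii) $\aa$ Abelian of codimension one, and (iii) the respectfulness relations for a putative open pair is overdetermined. I anticipate the clean route is to show that respectfulness forces both $H\cap\aa$ and $V\cap\aa$ (or their images under $D$) to be $D$-invariant — using that $[h,v]$ and $[v,v']$ all land in $\aa$ and are computed by $D$ applied to $\aa$-components — and then that $D$-invariance of these complementary-type subspaces makes $H$ or $V$ closed under bracket. Getting the case division watertight (which of $H$, $V$ contains an element with nonzero $X$-coordinate, and handling the possibility that both do) is where the care is needed; I would organise it as: if $V\subseteq\aa$ then $V$ is a subalgebra (done); otherwise normalise $X\in V$; then show the relation $[V,H]\subseteq H$ plus $[V,V]\subseteq\aa$ forces $D(V\cap\aa)\subseteq V$, whence $[V,V]=D(V\cap\aa)\subseteq V$ and $V$ is a subalgebra, contradiction.
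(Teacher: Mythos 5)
There is a genuine gap, and it sits exactly where you locate the ``main obstacle'': the bookkeeping never closes, and the clean route you anticipate at the end is in fact false. You propose to show that $[V,H]\subseteq H$ together with $[V,V]\subseteq\aa$ forces $D(V\cap\aa)\subseteq V$, hence that $V$ is a subalgebra. Respectfulness alone does not force this: in $\h_3$ with $[x,y]=z$, take $\aa=\Span(y,z)$, $V=\Span(x,y)$, $H=\Span(z)$. This is a respectful decomposition with $X=x\in V$ and $[V,V]\subseteq\aa$, yet $D(V\cap\aa)=\Span(z)\not\subseteq V$; no contradiction arises there because $H$ happens to be a subalgebra. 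So the contradiction in this lemma cannot be extracted from $V$ alone --- it must use that $H$ is not a subalgebra, and it must land on showing $[H,H]\subseteq H$. Your case analysis also misses the quickest exit in the branch $c|_H\equiv 0$: if $H\subseteq\aa$ then $[H,H]=0$, so $H$ is already a subalgebra and openness fails immediately, with no need to discuss $D$-invariance of $W$.

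The missing idea is a symmetric use of openness. Since $\aa$ is Abelian, every subspace of $\aa$ is a subalgebra; hence $V\not\subseteq\aa$ \emph{and} $H\not\subseteq\aa$, giving $v\in V\setminus\aa$ and $h\in H\setminus\aa$. As $\aa$ has codimension one, $h=av+w$ with $a\ne 0$ and $w\in\aa$, and $H=\Span(h)\oplus H_0$ with $H_0=H\cap\aa$. Because $H_0$, $w$ and $[H_0,H_0]$ all live in the Abelian ideal $\aa$, one computes $[H,H]=[h,H_0]=a[v,H_0]\subseteq[V,H]\subseteq H$, so $H$ is a subalgebra --- the desired contradiction. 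Note that this argument uses neither nilpotency nor the derivation $D$ explicitly; your repeated appeals to nilpotency of $D$ are unnecessary, and indeed the lemma holds for arbitrary Lie algebras.
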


\begin{proof} Suppose that $\g$ has a codimension one Abelian ideal $\aa$. As $V$ is not a subalgebra, $V\not\subseteq \aa$, so there exists $v\in V$ with $v\not\in \aa$. Similarly, there exists $h\in H$ with $h\not\in \aa$. 
Then $h = a v + w$, where $a \ne 0$ and $w \in \mathfrak{a}$, and $H = \Span(h) \oplus H_0$, where $H_0 = H \cap \mathfrak{a}$. But then $H \supseteq [V, H] \supseteq [v, H_0] = [h, H_0] = [H, H]$, a contradiction.
\end{proof}

\begin{definition}
We introduce  three vector subspaces  of $\g$: 
\begin{align*}
H_V&=\Span \{\pi_H [v,v'] \ :\ v,v'\in V\},\\
V_H&=\Span \{\pi_V [h,h'] \ :\ h,h'\in H\},\\
K_H&= \{h\in H\ :\ \pi_V[h,h']=0,\ \text{for all }h'\in H\}.
\end{align*}
\end{definition}

\begin{remark}\label{R:KH} Obviously, if $H$ is not a subalgebra of $\g$, then $\dim K_H\le \dim H-2$.
\end{remark}

Note that the bracket 
$\pi_{V} [.,.]$ makes $V$ into a Lie algebra; indeed, the only point that needs verifying here is the Jacobi identity. If $v_1,v_2,v_3\in V$, then as $V$ respects $H$, one has
\[
\pi_V[v_1,\pi_V[v_2,v_3]]=\pi_V[v_1,[v_2,v_3]].
\]
Hence the Jacobi identity follows from that of $\g$.

\begin{definition}
We denote by $\V$ the Lie algebra structure on $V$ given by the bracket $\pi_{V} [.,.]$.
\end{definition}

Using only the Jacobi identity, one has:

\begin{lemma}\label{L:JI} \ 
\begin{enumerate}
\item\label{L:sa} $K_H$ is a subalgebra of $\g$, 
\item\label{L:VKH} $[V,K_H]\subseteq K_H$, 
\item\label{L:ker} $H_V\subseteq K_H$,
\item\label{L:VH1} $V_H$ is an ideal of $\V$,
\item\label{L:VH2} $\bH:=H+V_H$ is an ideal of $\g$,
\item\label{L:H}  $\dim H\ge 3$,
\item\label{L:KH} if $\dim H= 3$, then $H_V= K_H$ and $\dim H_V=1$.
\end{enumerate}
\end{lemma}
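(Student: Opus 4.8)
The plan is to prove the seven items of Lemma~\ref{L:JI} in roughly the stated order, exploiting the Jacobi identity and the respectful condition $[V,H]\subseteq H$ as the only tools, together with Remark~\ref{R:KH}. For \eqref{L:sa}: given $h,h'\in K_H$, I want $\pi_V[[h,h'],h'']=0$ for all $h''\in H$; expanding $[[h,h'],h'']$ via Jacobi gives $[h,[h',h'']]-[h',[h,h'']]$, and writing $[h',h'']=\pi_H[h',h'']$ (since $h'\in K_H$) and similarly for $[h,h'']$, each resulting bracket is $[h,(\text{element of }H)]$, whose $\pi_V$-part vanishes again by $h\in K_H$ — so $\pi_V[[h,h'],h'']=0$, and $[h,h']\in K_H$. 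For \eqref{L:VKH}: for $v\in V$, $h\in K_H$, we have $[v,h]\in H$ by respectfulness; to see it lies in $K_H$, compute $\pi_V[[v,h],h']$ for $h'\in H$ using Jacobi: $[[v,h],h']=[v,[h,h']]-[h,[v,h']]$; here $[v,h']\in H$ so $[h,[v,h']]\in H$ (as $h\in K_H$ its $\pi_V$-part is zero — wait, I need $\pi_V$ of the whole bracket), so I project: $\pi_V[v,[h,h']]=\pi_V[v,\pi_H[h,h']]=\pi_V[v,0]=0$ using $h\in K_H$, and $\pi_V[h,[v,h']]=0$ since $[v,h']\in H$ and $h\in K_H$; hence $\pi_V[[v,h],h']=0$. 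For \eqref{L:ker}: for $v,v'\in V$ and $h\in H$, I must show $\pi_V[\pi_H[v,v'],h]=0$; from Jacobi $[[v,v'],h]=[v,[v',h]]-[v',[v,h]]$, both $[v',h],[v,h]\in H$, so both terms on the right lie in $[V,H]\subseteq H$, giving $\pi_V[[v,v'],h]=0$; but $[v,v']=\pi_H[v,v']+\pi_V[v,v']$ and the $\pi_V$-part contributes $[\pi_V[v,v'],h]$ which need not be in $H$ — however $\pi_V[v,v']\in V$ and $[\,V,h\,]\subseteq H$ again, so that term is also in $H$, hence $\pi_V[\pi_H[v,v'],h]=0$.

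For \eqref{L:VH1}: $V_H=\Span\{\pi_V[h,h']\}$ is by construction a subspace of $V$; to see it is an ideal of $\V$ I compute, for $v\in V$ and $h,h'\in H$, $\pi_V[v,\pi_V[h,h']]$, which equals $\pi_V[v,[h,h']]$ (because $[v,\cdot]$ on the $\pi_H$-part stays in $H$), and then Jacobi rewrites $[v,[h,h']]=[[v,h],h']+[h,[v,h']]$ with $[v,h],[v,h']\in H$, so $\pi_V[v,[h,h']]=\pi_V[[v,h],h']+\pi_V[h,[v,h']]\in V_H$. Then \eqref{L:VH2} follows: $\bH=H+V_H$, and I need $[\g,\bH]\subseteq\bH$; split $\g=H+V$ and $\bH=H+V_H$ and check the four pieces: $[H,H]\subseteq H+V_H$ by definition of $V_H$; $[V,H]\subseteq H$; $[V,V_H]\subseteq V_H$ by \eqref{L:VH1} combined with the fact that $\pi_H$ of $[V,V_H]$ lands in $H_V\subseteq K_H\subseteq H$ — actually $[V,V_H]\subseteq H+V$ trivially, and its $\pi_V$-component is in $V_H$ by \eqref{L:VH1}, while its $\pi_H$-component is automatically in $H\subseteq\bH$; and $[H,V_H]$ — here I use that $V_H\subseteq\V$, and for $h\in H$, $w\in V_H\subseteq V$, $[h,w]=-[w,h]\in -[V,H]\subseteq H\subseteq\bH$. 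So $\bH$ is an ideal.

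For \eqref{L:H}: since $(H,V)$ is open, $H$ is not a subalgebra, so $\dim H\ge 2$; suppose $\dim H=2$, say $H=\Span(h_1,h_2)$ with $\pi_V[h_1,h_2]\ne 0$. The idea is to derive a contradiction using respectfulness: pick $v\in V$; then $[v,h_1],[v,h_2]\in H$, so $\ad(v)$ restricts to an endomorphism of the $2$-dimensional $H$, and one analyzes $[v,[h_1,h_2]]$ via Jacobi — writing $[h_1,h_2]=\pi_H[h_1,h_2]+\pi_V[h_1,h_2]$ and using that the $\pi_H$-part is a multiple of $h_1,h_2$ and the $\pi_V$-part is a nonzero element $w\in V$ — to show $\pi_H[v,h_1],\pi_H[v,h_2]$ and the action are tightly constrained, ultimately forcing $[v,\cdot]$ to kill $H$ for all $v$ or $V$ to be a subalgebra (contradicting openness); I expect this to be the one genuinely fiddly case and the main obstacle. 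For \eqref{L:KH}: when $\dim H=3$, Remark~\ref{R:KH} gives $\dim K_H\le 1$; since $H$ is not a subalgebra, $V_H\ne 0$, and $V_H$ is an ideal of the $2$-dimensional $\V$; combined with \eqref{L:ker}, $H_V\subseteq K_H$ so $\dim H_V\le 1$. I then argue $H_V\ne 0$: if $H_V=0$ then $\pi_H[v,v']=0$ for all $v,v'\in V$, i.e. $[V,V]\subseteq V$, so $V$ is a subalgebra, contradicting openness. Hence $\dim H_V=1$ and $H_V=K_H$. I would present these in this order, flagging that \eqref{L:H} is the delicate point and everything else is a short Jacobi computation.
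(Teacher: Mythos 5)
Parts \eqref{L:sa}--\eqref{L:VH2} and \eqref{L:KH} of your proposal are correct and follow essentially the same Jacobi-identity computations as the paper (your \eqref{L:VH2} just spells out the four bracket pieces that the paper leaves implicit; the slip ``$\pi_V[v,\pi_H[h,h']]=\pi_V[v,0]$'' in \eqref{L:VKH} is only notational, since what you mean is that $[v,\pi_H[h,h']]\in[V,H]\subseteq H$ has vanishing $V$-component).

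The genuine gap is part \eqref{L:H}. You never prove it: you sketch a plan (``analyse $\ad(v)$ on the $2$-dimensional $H$ \dots\ ultimately forcing $[v,\cdot]$ to kill $H$ for all $v$ or $V$ to be a subalgebra'') and defer it as ``the one genuinely fiddly case.'' That plan is not a proof, and it is not clear it can be completed as stated — in particular, ``$[v,\cdot]$ kills $H$ for all $v\in V$'' is not by itself a contradiction with openness. The point you are missing is that \eqref{L:H} is an immediate consequence of the machinery you have already set up, by exactly the argument you use for \eqref{L:KH}: if $\dim H=2$ and $H$ is not a subalgebra, then Remark~\ref{R:KH} gives $\dim K_H\le\dim H-2=0$, i.e.\ $K_H=0$; but $V$ is not a subalgebra, so $H_V\ne 0$, and $H_V\subseteq K_H$ by part \eqref{L:ker} — a contradiction. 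No case analysis of the $\ad(v)$-action is needed. With that one-line replacement for \eqref{L:H}, your proof is complete and coincides with the paper's.
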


\begin{proof}(a).  Let $h_1,h_2\in K_H$.  Since $h_1\in K_H$, we have $[h_1,h_2]\in H$. Let $h'\in H$. Then $[h_1,h']\in H$ so $[h_2,[h_1,h']]\in H$. Similarly, $[h_1,[h_2,h']]\in H$. Thus the Jacobi identity gives $[[h_1,h_2],h']\in H$, as required.

(b).  Let $h\in K_H, v\in V$.  Since $V$ respects $H$,  we have $[h,v]\in H$. Let $h'\in H$. Since $h\in K_H$, we have $[h,h']\in H$ so $[v,[h,h']]\in H$. Similarly, $[h,[v,h']]\in H$. Thus the Jacobi identity gives $[[h,v],h']\in H$, as required.

(c). Suppose   $v,v'\in V$ and set $h=\pi_H[v,v']$.
Because $V$ respects $H$, for all $h'\in H$ the Jacobi identity gives
\begin{align*}
0&=\pi_V ([[v,v'],h']+[[h',v],v']+[[v',h'],v])=\pi_V\ [[v,v'],h']=\pi_V [\pi_H[v,v'],h']=\pi_V[h,h'],
\end{align*}
so $h\in K_H$. It follows that $H_V\subseteq K_H$.

(d). Let $h_1,h_2\in H$ and let $w:=\pi_V[h_1,h_2]$. Suppose that  $v\in V$.   As $V$ respects $H$, the Jacobi identity gives
\[
\pi_V[v,w]=\pi_V[v,[h_1,h_2]]=\pi_V[[v,h_1],h_2]+\pi_V[h_1,[v,h_2]]\in V_H.
\]
So (d) holds. (e) follows from (d) since $V$ respects $H$.

(f). As $H$ is not Lie subalgebra of $\g$, it has dimension at least two.  We assume that $\dim H=2$ and we will derive a contradiction. 
Since $\dim H=2$ and $H$ is not a Lie subalgebra of $\g$, we have $K_H=0$, by Remark~\ref{R:KH}. But this is impossible by (b), since $H_V\not=0$.

(g). As $V$ is not a subalgebra, we have $H_V\not=0$. As $\dim H= 3$ and $H$ is not a subalgebra, we have $\dim K_H\le 1$, by Remark~\ref{R:KH}. But $H_V\subseteq K_H$ by (b). So $H_V= K_H$. 
\end{proof}

\begin{example}\label{Ex:dim5nonu} Consider the non-unimodular solvable Lie algebra $\g$ of dimension 5 with basis $x_1,x_2,y_1,y_2,z$ and relations $[x_1,y_1]=z, [x_2,y_2]=z+y_2$.
Let $H=\Span(x_1,y_1,z+y_2)$ and $V=\Span(x_2,y_2)$. Clearly, 
$(H,V)$  is an open respectful decomposition of $\g$, the algebra $\V$ is Abelian, and $\dim [\g,\g]=2$.
\end{example}

\begin{example}\label{Ex:dim5} Consider the non-unimodular solvable Lie algebra $\g$ of dimension 5 with basis $v_1,v_2,h_1,h_2,h_3$ and relations:
\[
[v_1,v_2]=v_1+h_3,\quad [h_1,h_2]=v_1+h_3,\quad [h_1,v_2]=h_1.
\]
Let $H=\Span(h_1,h_2,h_3)$ and $V=\Span(v_1,v_2)$. Clearly, 
$(H,V)$  is an open respectful decomposition of $\g$,  the algebra $\V$ is not Abelian, and $\dim [\g,\g]=2$.
\end{example}

\begin{example}\label{Ex:dim52} Consider the non-unimodular solvable Lie algebra $\g$ of dimension 5 with basis $v_1,v_2,h_1,h_2,h_3$ and relations:
\[
[v_1,v_2]=-2v_1-2h_3,\quad [h_1,h_2]=v_1+h_3,\quad [v_2,h_1]=h_1,\quad [v_2,h_2]=h_2.
\]
Let $H=\Span(h_1,h_2,h_3)$ and $V=\Span(v_1,v_2)$. Clearly, 
$(H,V)$  is an open respectful decomposition of $\g$,   $\V$ is not Abelian, and $\dim [\g,\g]=3$.
\end{example}

\begin{problem} Classify all Lie algebras $\g$ of dimension 5 that have an open respectful decomposition.
\end{problem}

\begin{lemma}\label{L:derv} Suppose that a Lie algebra $\g$  has an open respectful decomposition $(H,V)$ and that $\dim V=2$. Then  $\dim [\g,\g]\ge 2$.
\end{lemma}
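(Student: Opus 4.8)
The plan is to argue by contradiction, assuming $\dim[\g,\g]=1$. Since openness of $(H,V)$ forces $\g$ to be non-Abelian, we have $\dim[\g,\g]\ge 1$ automatically, so it suffices to rule out equality. Write $[\g,\g]=\Span(z)$ with $z\ne 0$. Because $V$ is not a subalgebra, $[V,V]\not\subseteq V$, and since $[V,V]$ is a nonzero subspace of the line $\Span(z)$, this gives $[V,V]=\Span(z)$ together with $z\notin V$; the identical argument applied to $H$ yields $[H,H]=\Span(z)$ and $z\notin H$. Consequently, writing $z=h_0+v_0$ with $h_0=\pi_H(z)$ and $v_0=\pi_V(z)$, both $h_0$ and $v_0$ are nonzero.

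The crucial observation is that $[V,H]=0$: for $v\in V$ and $h\in H$ we have $[v,h]\in H$ since $V$ respects $H$, while $[v,h]\in\Span(z)$, and $\Span(z)\cap H=0$ because $z\notin H$. Now choose $h,h'\in H$ with $[h,h']=z$ (possible as $[H,H]=\Span(z)$). For any $v\in V$, the Jacobi identity applied to $h,h',v$, combined with $[H,V]=[V,H]=0$, yields $[z,v]=0$. Expanding $[z,v]=[h_0,v]+[v_0,v]$ and using $[h_0,v]\in[H,V]=0$, we obtain $[v_0,v]=0$ for every $v\in V$.

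At this point I would invoke the hypothesis $\dim V=2$: since $v_0\ne 0$, complete it to a basis $\{v_0,v_2\}$ of $V$; then $[V,V]=\Span([v_0,v_2])=0$, contradicting the fact that $V$ is not a subalgebra. Hence $\dim[\g,\g]\ge 2$. I expect the main obstacle to be purely organisational: one must first extract from $\dim[\g,\g]=1$ the two facts $z\notin H$ and $z\notin V$, since the former is exactly what makes $[V,H]$ vanish and the latter is what makes the restriction $\dim V=2$ bite at the end. It is worth noting that the conclusion genuinely fails for $\dim V\ge 3$ — the argument above only produces $[v_0,V]=0$, which is no obstruction once $\dim V>2$ — so any correct proof must use $\dim V=2$ in an essential way, and the step above is the natural place for it.
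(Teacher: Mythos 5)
Your argument is correct and follows essentially the same route as the paper's proof: assume $\dim[\g,\g]=1$, note that openness forces $z=\pi_H(z)+\pi_V(z)$ with both components nonzero so that $H\cap[\g,\g]=0$ kills all brackets $[V,H]$, apply the Jacobi identity to two elements of $H$ whose bracket is $z$ together with an element of $V$, and then use $\dim V=2$ to reach a contradiction with $[V,V]\ne 0$. The only (cosmetic) difference is that the paper picks a specific $v'$ with $[v',\pi_V(z)]=z$ and derives $z=0$ directly, whereas you first show $[\pi_V(z),V]=0$ and then conclude $V$ would be Abelian.
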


\begin{proof} Suppose that $\dim [\g,\g]=1$, with $[\g,\g]=\Span(z)$, say. Let $z=h+v$, where $h\in H$ and $v\in V$. Since neither $H$ nor $V$ is a Lie subalgebra of $\g$, we have $v$ and $h$ are both nonzero. In particular $H\cap  [\g,\g]=0$. Choose $h_1,h_2\in H$ such that $[h_1,h_2]=z$. Since  $\dim V=2$, we can choose $v'\in V$ such that $[v',v]=z$. For $i=1,2$, since $V$ respects $H$, we have $[v',h_i]\in H$, so $[v',h_i]=0$ since $[v',h_i]\in [\g,\g]$ and $H\cap  [\g,\g]=0$. Similarly, $[v',h]=0$. Therefore, the Jacobi identity gives
\[
0=[v',[h_1,h_2]]+[h_1,[h_2,v']]+[h_2,[v',h_1]]=[v',[h_1,h_2]]=[v',z]=[v',h+v]=[v',v]=z,
\]
which is a contradiction.
\end{proof}

\begin{example}\label{Ex:dim6} Consider the Lie algebra $\g$ of dimension 6 formed from the direct sum of $\R$ with the five dimensional Heisenberg algebra $\h_5$:   $\g$ has  basis $x_i,y_i,z,w$, for $i=1,2$, and relations $[x_i,y_i]=z$.  
 Let $H=\Span(x_1,y_1,z+w)$ and $V=\Span(x_2,y_2,z-w)$. Clearly, 
$(H,V)$ is an open respectful decomposition of $\g$. Note that in this case, 
the derived algebra $[\g,\g]=\Span(z)$ has dimension one.
\end{example}

\begin{lemma}\label{L:derv2} Suppose that a Lie algebra $\g$  has an open respectful decomposition $(H,V)$ and that $\dim H=3$. If $\dim \g=5,6$ or $8$, then  $\dim [\g,\g]\le \dim\g-1$.
\end{lemma}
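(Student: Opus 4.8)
The plan is to prove the contrapositive form: $\dim[\g,\g]\le\dim\g-1$ is equivalent to $[\g,\g]\neq\g$, so I want to show that under the hypotheses $\g$ cannot be perfect. The first step is a containment. Using that $V$ respects $H$, that $H_V\subseteq K_H$ with $[V,K_H]\subseteq K_H$, and that $V_H+[\V,\V]$ is an ideal of $\V$ (all from Lemma~\ref{L:JI}), one checks by a direct bracket computation that
$[\g,\g]\subseteq H+V_H+[\V,\V]=:\mathfrak m$ and that $\mathfrak m$ is an ideal of $\g$. Since $V_H$ and $[\V,\V]$ both lie inside $V$ and $H\cap V=0$, we have $\dim\mathfrak m=\dim H+\dim(V_H+[\V,\V])=3+\dim(V_H+[\V,\V])$. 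Hence, if $V_H+[\V,\V]\neq V$ — equivalently, if the Lie algebra $\V/V_H\cong\g/\bH$ fails to be perfect — then $\dim[\g,\g]\le\dim\mathfrak m\le\dim\g-1$ and we are done.

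It remains to rule out the case in which $\V/V_H$ is perfect. Since $H$ is not a subalgebra, $V_H\neq0$, so $\dim(\V/V_H)\le\dim V-1=\dim\g-4$; and since $V_H=\operatorname{Sp}\{\pi_V[h,h']\}$ is the image of $\Lambda^2 H$, we have $\dim V_H\le 3$, so $\dim(\V/V_H)\ge\dim V-3=\dim\g-6$. A nonzero perfect Lie algebra has dimension $\ge 3$, and (by a short Levi-decomposition argument) there is no perfect Lie algebra of dimension $4$; thus the only nonzero perfect Lie algebras of dimension $\le 4$ are the $3$-dimensional simple ones. Consequently: for $\dim\g=5,6$ the perfect algebra $\V/V_H$ must be $0$, i.e. $V_H=V$; for $\dim\g=8$ either $V_H=V$ (impossible, since then $\dim V_H=5>3$) or $\V/V_H\cong\mathfrak{sl}_2(\R)$ or $\mathfrak{su}_2$ with $\dim V_H=2$. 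So what is left is to exclude (i) $\g$ perfect with $V_H=V$ in dimensions $5$ and $6$, and (ii) $\g$ perfect in dimension $8$ with $\dim V_H=2$ and $\V/V_H$ a $3$-dimensional simple algebra.

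For the last step I would use the subalgebra $\mathfrak m':=\langle V\rangle$ generated by $V$. Since $[V,H]\subseteq H$, one gets $[\mathfrak m',H]\subseteq H$, so $\mathfrak m'$ normalises $H$ as a subspace; if $\mathfrak m'=\g$ then $H$ is an ideal, hence a subalgebra, contradicting openness, so $\mathfrak m'$ is a proper subalgebra, necessarily of codimension $1$ or $2$, with $V\subseteq\mathfrak m'$, with $\mathfrak m'=(H\cap\mathfrak m')\oplus V$, and with $H\cap\mathfrak m'$ an ideal of $\mathfrak m'$ that contains $K_H$ and satisfies $\mathfrak m'/(H\cap\mathfrak m')\cong\V$. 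Feeding this into the structure forced by $\g=[\g,\g]$ (whence $\g=\mathfrak s\ltimes\mathfrak r$ with $\mathfrak s\neq0$, $\mathfrak r$ nilpotent, and $\g$ unimodular) pins $\g$ down to a short list of perfect Lie algebras in each of the dimensions $5$, $6$, $8$, and one checks directly that none of them carries such a decomposition (alternatively, one may invoke the classification of perfect Lie algebras of dimension $\le 8$ and check case by case). The easy cases are dimension $5$, where $\g\cong\mathfrak{sl}_2(\R)\ltimes\R^2$ is the only candidate, and the semisimple dimension-$6$ algebras, where compactness/eigenvalue considerations or the small ideal lattice give the contradiction quickly; the main obstacle is case (ii), a semidirect product of a $3$-dimensional simple algebra with a $5$-dimensional nilpotent radical, where excluding the open respectful decomposition requires the most delicate bookkeeping with $V_H$, $K_H$ and the induced bracket on $\V$.
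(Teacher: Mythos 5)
Your overall strategy is the same as the paper's: pass to the quotient of $\g$ by the ideal $\bH=H+V_H$ (your $\mathfrak m$ is precisely the preimage of $[\g/\bH,\g/\bH]$) and invoke the nonexistence of perfect Lie algebras in low dimensions. The first half of your argument is correct. The problem is that you never pin down $\dim V_H$, only bounding it by $1\le\dim V_H\le 3$, and this is exactly what leaves you with the residual cases (i) and (ii), which you do not actually resolve: the final paragraph is a programme (``pins $\g$ down to a short list\dots one checks directly\dots requires the most delicate bookkeeping''), not a proof, and you concede as much. In particular the dimension-$8$ case with $\V/V_H$ a $3$-dimensional simple algebra acting on a $5$-dimensional radical is left entirely open. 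As written, the argument is incomplete.

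The missing ingredient is Lemma~\ref{L:JI}\eqref{L:KH} together with Remark~\ref{R:KH}: since $H$ is not a subalgebra and $\dim H=3$, the radical $K_H$ of the alternating map $(h,h')\mapsto\pi_V[h,h']$ satisfies $\dim K_H\le\dim H-2=1$, while $0\ne H_V\subseteq K_H$ forces $\dim K_H=1$; the map then descends with trivial radical to the $2$-dimensional space $H/K_H$, whose second exterior power is $1$-dimensional, so $\dim V_H=1$ exactly (not merely $\le 3$). With this, $\bH=H+V_H$ is a $4$-dimensional ideal and, were $\g$ perfect, $\g/\bH$ would be a perfect Lie algebra of dimension $\dim\g-4\in\{1,2,4\}$, which does not exist. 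Your cases (i) and (ii) therefore never arise, and no classification of perfect algebras or case-by-case exclusion is needed.
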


\begin{proof} 
Arguing by contradiction, suppose that $[\g,\g]=\g$; i.e., $\g$ is perfect.
Since $\dim H= 3$ and neither $H$ nor $V$ is a Lie subalgebra, by Lemma~\ref{L:JI}\eqref{L:KH} we necessarily have $\dim H_V=1$ and $\dim V_H=1$. 
So $\dim(H+V_H)=4$. By Lemma~\ref{L:JI}\eqref{L:VH2}, $H+V_H$ is an ideal of $\g$. Consider the quotient algebra $\q:=\g/(H+V_H)$, which has dimension $\dim\g-4$. 
Since, $\g$ is perfect, $\q$ is perfect. But it is well known and easy to prove that there are no perfect Lie algebras of dimension 1, 2 or 4. This gives the required condition on $\dim \g$.
 \end{proof}

\begin{theorem}\label{T:derv} Suppose that a unimodular Lie algebra $\g$ of dimension five has an open respectful decomposition $(H,V)$. Then the derived algebra $[\g,\g]$ has dimension 3 or 4.
\end{theorem}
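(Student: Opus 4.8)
The goal is to pin down $\dim[\g,\g]\in\{3,4\}$ for a five-dimensional unimodular $\g$ with an open respectful decomposition $(H,V)$. The dimension constraints force $\dim H=3$, $\dim V=2$, so Lemma~\ref{L:JI}\eqref{L:KH} applies: $\dim H_V=\dim K_H=1$, and by Lemma~\ref{L:JI}\eqref{L:VH1}--\eqref{L:VH2} $\dim V_H=1$ and $\bH=H+V_H$ is a $4$-dimensional ideal. The plan is to rule out $\dim[\g,\g]\le 2$ from below and $\dim[\g,\g]=5$ from above, leaving only $3$ and $4$.

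\textbf{Upper bound.} That $\dim[\g,\g]\ne 5$ is immediate from Lemma~\ref{L:derv2} (with $\dim\g=5$), so $\dim[\g,\g]\le 4$. So the real work is the lower bound.

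\textbf{Lower bound.} First, $\dim[\g,\g]\ge 2$ is exactly Lemma~\ref{L:derv} (here $\dim V=2$). It remains to exclude $\dim[\g,\g]=2$. I would argue by contradiction. Since $(H,V)$ is open, $H_V\ne 0$ and $V_H\ne 0$, so $[\g,\g]$ meets both $H$ and $V$ nontrivially; as $\dim[\g,\g]=2$ this gives $[\g,\g]=H_V'\oplus V_H'$ for one-dimensional pieces, and in fact (comparing with the one-dimensional spaces $H_V\subseteq K_H$ and $V_H$) one gets $[\g,\g]=H_V\oplus V_H$, a two-dimensional subspace of the four-dimensional ideal $\bH$. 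Now here is the key structural point: $\bH$ has codimension one in $\g$, so $\g/\bH$ is one-dimensional, hence $\g$ is \emph{not} perfect and, more importantly, $[\g,\g]\subseteq\bH$ automatically; the content is that $[\g,\g]$ is already two-dimensional inside this four-dimensional ideal, so $\bH/[\g,\g]$ is two-dimensional abelian and central-ish in the quotient. I would then extract a derivation/unimodularity contradiction: pick $v_1,v_2$ spanning $V$ with $\pi_V[v_1,v_2]$ spanning $V_H$ (using that $\V$ is the one relevant quotient bracket), and a basis $h_1,h_2,h_3$ of $H$ with $K_H=\Span(h_3)$ and $\pi_H[h_1,h_2]$ spanning $H_V=K_H$. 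Unimodularity of $\g$ means $\tr\ad(X)=0$ for all $X$; applying this to $v_1,v_2$ and analyzing the action of $\ad(v_i)$ on $H$ (which preserves $H$ since $V$ respects $H$) versus on $V$, together with the fact that $[\g,\g]$ is only two-dimensional, should force $\ad(v_i)$ to be nilpotent on $H$ and hence, combined with the constraint that $[V,V]\not\subseteq\bH^{\perp}$-type relations, collapse $V$ into being a subalgebra — contradicting openness. Concretely: if $\dim[\g,\g]=2$ with $[\g,\g]=H_V\oplus V_H$, then $[V,H]\subseteq H\cap[\g,\g]=H_V=K_H$ is one-dimensional, so $[V,K_H]\subseteq K_H$ and $[V,H_0]\subseteq K_H$ where $H=K_H\oplus H_0$; this makes the maps $\ad(v)|_H$ rank $\le 1$ with image in $K_H$, so $\tr\ad(v)|_H=\langle[v,\cdot],\cdot\rangle$ restricted appropriately; meanwhile $\tr\ad(v)|_V$ is the trace of the $\V$-action plus the $\pi_H$-correction, and unimodularity $\tr\ad(v)|_H+\tr\ad(v)|_V=0$ combined with nilpotency considerations pushes $\V$ to be unimodular two-dimensional, i.e.\ abelian; but then one checks directly that $V$ closes up as a subalgebra, the desired contradiction.

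\textbf{Main obstacle.} The delicate step is the exclusion of $\dim[\g,\g]=2$: one must organize the bracket relations so that unimodularity genuinely bites. The bookkeeping of how $\ad(v)$ splits along $H\oplus V$ and how its trace interacts with the two-dimensional $[\g,\g]$ is where I expect to spend the effort; a clean way is probably to note that $[\g,\g]=H_V\oplus V_H$ forces $\g$ to be a central extension of a $3$-dimensional algebra by $[\g,\g]$, classify the possibilities in low dimension, and check each fails openness or unimodularity. Everything else reduces to citing Lemmas~\ref{L:derv} and~\ref{L:derv2} and Lemma~\ref{L:JI}.
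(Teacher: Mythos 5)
Your frame is right and two of the three bounds are exactly the paper's: $\dim[\g,\g]\le 4$ is Lemma~\ref{L:derv2} and $\dim[\g,\g]\ge 2$ is Lemma~\ref{L:derv}, and the paper likewise reduces everything to excluding $\dim[\g,\g]=2$ via a unimodularity/trace contradiction. But that exclusion is the entire content of the theorem, and your argument for it has a genuine gap at its very first structural step: the claim that $[\g,\g]$ meets both $H$ and $V$ nontrivially and hence equals $H_V\oplus V_H$. Neither assertion follows. $H_V=\pi_H[V,V]$ and $V_H=\pi_V[H,H]$ are \emph{projections} of subsets of $[\g,\g]$, and projections of a subspace need not lie in that subspace. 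In fact the paper's own analysis shows your identity is false in one of the two cases it must treat: when $\V$ is non-Abelian one is forced (after normalization) to $[v_1,v_2]=v_2+h_3$ and $[\g,\g]=\Span(v_2+h_3,h)$ with $h\in H$, and a Jacobi-identity computation shows $h\notin\Span(h_3)=H_V$; consequently $[\g,\g]\cap V=0$ and $H\cap[\g,\g]=\Span(h)\ne H_V$. Everything you build on the identity $[\g,\g]=H_V\oplus V_H$ — in particular ``$[V,H]\subseteq H\cap[\g,\g]=H_V=K_H$'' and the claim that $\ad(v)|_H$ has rank $\le 1$ with image in $K_H$ — therefore collapses.

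Beyond that, the remainder of the exclusion is not a proof but a sequence of unverified expectations (``should force'', ``pushes'', ``one checks directly''), and it aims at the wrong contradiction: you predict that $V$ closes up into a subalgebra, whereas the actual obstruction is unimodularity itself. The paper splits into the cases $\V$ non-Abelian and $\V$ Abelian and, in each, runs explicit Jacobi-identity computations (equations \eqref{E:vhht3b}--\eqref{E:vhht3e}) to pin down the brackets well enough to exhibit a specific element ($v_1$ in the first case, $h'$ in the second) whose adjoint has trace $2$, respectively $1$. Note also that $\V$ non-Abelian is not excluded by unimodularity of $\g$ (unimodularity of $\g$ does not pass to $\V$), so your hope that ``$\V$ is unimodular two-dimensional, i.e.\ Abelian'' cannot be used to shortcut the first case; indeed Examples~\ref{Ex:dim5} and~\ref{Ex:dim52} show that non-Abelian $\V$ genuinely occurs in dimension $5$ once unimodularity is dropped. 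To repair your write-up you would essentially have to reconstruct the paper's two-case computation.
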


\begin{proof} From Lemma~\ref{L:derv2}, we have $\dim [\g,\g]\le 4$, and from Lemma~\ref{L:derv}, we have $\dim [\g,\g]\ge 2$. Suppose that $\dim [\g,\g]=2$. 
From Lemma~\ref{L:JI}\eqref{L:H}, we have $\dim H\ge 3$ and so as $\dim \g= 5$ and neither $H$ nor $V$ is a Lie subalgebra, $\dim H= 3$ and $\dim V=2$. 
As $V$ is not a Lie subalgebra and $\dim V=2$, we have that $\dim H_V=1$. 
Choose a basis $h_1,h_2,h_3$  for $H$ so that $H_V =\Span(h_3)$. So by Lemma~\ref{L:JI}\eqref{L:ker}, $V_H=\Span(\pi_V[h_1,h_2])$. In particular, $\dim V_H=1$. 

First suppose that  $\V$ is not Abelian. So the derived algebra $\pi_V[\V,\V]$ has dimension one. Thus, as  $\dim V_H=1$ and by  Lemma \ref{L:JI}\eqref{L:VH1}, $V_H$ is an ideal of $\V$, we have $V_H=\pi_V[\V,\V]$. Choose basis elements $v_1,v_2$ for $V$ such that 
$\pi_V[h_1,h_2]=v_2$ and $[v_1,v_2]=v_2+h_3$.

Since $[\g,\g]\subseteq \Span(h_1,h_2,h_3,v_2)$ and $\dim [\g,\g]=2$, we have $[\g,\g]=\Span(v_2+h_3,h)$, for some $h\in H$.
So $H\cap [\g,\g]=\Span(h)$.
Thus, as $V$ respects $H$,  we have $[v_1,h_i]\in \Span(h)$ for $i=1,2,3$.
The Jacobi identity gives
\begin{equation}\label{E:vhht3b}
[[v_1,h_1],h_2]+[h_1,[v_1,h_2]]=[v_1,[h_1,h_2]].
\end{equation}
If $h\in \Span(h_3)$, then
 the LHS of \eqref{E:vhht3b} belongs to $H$ by Lemma~\ref{L:JI}\eqref{L:ker}, while for the RHS we have
$\pi_V[v_1,[h_1,h_2]]=\pi_V[v_1,v_2]=v_2$, giving a contradiction. So $h\not\in \Span(h_3)$.

We have $[v_1,h]=rh$ for some $r\in \R$. Choose $h'\in H$ such that $h,h',h_3$ is a basis for $H$, and such that $\pi_V[h',h]=v_2$.  The Jacobi identity gives
\begin{equation}\label{E:vhht3c}
[[v_1,h'],h]+[h',[v_1,h]]=[v_1,[h',h]].
\end{equation}
As  $[v_1,h']\in\Span(h)$, the $V$ component of the LHS of \eqref{E:vhht3c} is $\pi_V[h',[v_1,h]]=\pi_V[h',rh]=rv_2$.
The $V$ component of the RHS of \eqref{E:vhht3c} is $\pi_V[v_1,v_2]=v_2$. So $r=1$.
From the above considerations,  we have 
\[
[v_1,v_2]=v_2+h_3,\quad [v_1,h]=h,\quad [v_1,h']\in \Span(h),\quad [v_1,h_3]\in \Span(h), 
\]
so $\ad(v_1)$ has trace 2, contradicting the hypothesis that $\g$ is unimodular. We conclude that $\V$ is Abelian.

Choose basis elements $v_1,v_2$ for $V$ so that $[v_1,v_2]=h_3$ and $[h_1,h_2]=v_2+h$, for some $h\in H$.
We have $[\g,\g]=\Span(v_2+h,h_3)$.
In particular, $H\cap [\g,\g]=\Span(h_3)$.
Thus, as $V$ respects $H$,  we have $[v,h_i]\in \Span(h_3)$ for all $v\in V$ and $i=1,2,3$.
Since $\g$ is unimodular and $\V$ is Abelian, it follows that $[v,h_3]=0$ for all $v\in V$.

For all $f\in H$, the Jacobi identity gives
\begin{equation}\label{E:fvv}
[[f,v_1],v_2]+[v_1,[f,v_2]]=[f,[v_1,v_2]].
\end{equation}
The LHS is 0, since $[f,v_i]\in \Span(h_3)$ for $i=1,2$, and $[v,h_3]=0$ for all $v\in V$.
The RHS is $[f,h_3]$. Hence $[f,h_3]=0$ for all $f\in H$.

The Jacobi identity also gives
\begin{equation}\label{E:vhht3e}
[[v_1,h_1],h_2]+[h_1,[v_1,h_2]]=[v_1,[h_1,h_2]].
\end{equation}
Since $[v_1,h_i]\in \Span(h_3)$ for $i=1,2$ and $[f,h_3]=0$  all  $f\in H$, the LHS of \eqref{E:vhht3e} is $0$. The RHS is
$[v_1,[h_1,h_2]]=[v_1,v_2+h]=h_3+[v_1,h]$. So $[v_1,h]=-h_3$. In particular, since $[v_1,h_3]=0$, we have $h\not\in \Span(h_3)$. Choose $h'\in H$ such that $h,h',h_3$ is a basis for $H$, and furthermore, that $[h',h]=[h_1,h_2]=v_2+h$. 
From the above considerations,  we have 
\[
[h',v_1]\in \Span(h_3),\quad [h',v_2]\in \Span(h_3),\quad [h',h]=v_2+h
,\quad [h',h_3]=0, 
\]
so $\ad(h')$ has trace 1, contradicting the hypothesis that $\g$ is unimodular. This conclude the proof.
\end{proof}

\begin{remark} Note that $\dim [\g,\g]=2$ can occur in the non-unimodular case; see Examples~\ref{Ex:dim5nonu}  and \ref{Ex:dim5}.
\end{remark}

We will show below in Theorem~\ref{T:dim>=6}  that not all unimodular Lie algebras of dimension five having a derived algebra of dimension 3 or 4, possess an open respectful decomposition.
First note that even though in general $\V$ is not a subalgebra of $\g$, and the map $\pi_V:\g\to \V$ is not a Lie algebra homomorphism, we have:

\begin{lemma}\label{L:nil} If $\g$ is nilpotent, then $\V$ is nilpotent. 
\end{lemma}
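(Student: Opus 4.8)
The plan is to show that if $\g$ is nilpotent then the bracket $[v,v']_{\V}:=\pi_V[v,v']$ makes $\V$ a nilpotent Lie algebra. The natural strategy is to relate the lower central series of $\V$ to that of $\g$, using the fact that $V$ respects $H$. The key observation, which I would establish first, is that for $v_1,\dots,v_k\in V$ the iterated $\V$-bracket $[v_1,[v_2,[\dots,[v_{k-1},v_k]_\V\dots]_\V]_\V]_\V$ equals $\pi_V$ applied to the corresponding iterated $\g$-bracket $[v_1,[v_2,[\dots,[v_{k-1},v_k]\dots]]]$. This is exactly the computation already used in the excerpt to verify the Jacobi identity for $\V$: since $V$ respects $H$, we have $\pi_V[v,\pi_V[\dots]]=\pi_V[v,[\dots]]$ whenever the inner bracket is a bracket of elements of $V$, so an easy induction on $k$ lets one strip off all the intermediate $\pi_V$'s.

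Granting this, the argument is short: the $k$-th term $\V^k$ of the lower central series of $\V$ (with $\V^1=\V$, $\V^{k+1}=[\V,\V^k]_\V$) is spanned by elements of the form displayed above, hence is contained in $\pi_V(\g^k)$, where $\g^k$ is the $k$-th term of the lower central series of $\g$. Since $\g$ is nilpotent, $\g^k=0$ for $k$ large, and therefore $\V^k\subseteq\pi_V(\g^k)=0$ for the same $k$, so $\V$ is nilpotent. One small point to be careful about: $\V^k$ is spanned not just by the "left-normed" brackets but by all iterated brackets; however, by the Jacobi identity every iterated bracket in a Lie algebra is a linear combination of left-normed ones, so it suffices to control the left-normed brackets, and the identity from the first paragraph handles precisely those. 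Alternatively, and perhaps cleanly, one can avoid the left-normed reduction by proving directly by induction on $k$ that $\V^k\subseteq\pi_V(\g^k)$: if $w\in\V^k$ is a bracket $[v,u]_\V$ with $v\in V$ and $u\in\V^{k-1}\subseteq\pi_V(\g^{k-1})$, write $u=\pi_V(g)$ with $g\in\g^{k-1}$; then because $V$ respects $H$, $\pi_V[v,\pi_V(g)]=\pi_V[v,g]\in\pi_V(\g^k)$, which is the inductive step, and the base case $\V^1=V=\pi_V(\g)\supseteq\pi_V(\g^1)$ is trivial (or one starts the induction at $\V^2$).

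I do not expect a serious obstacle here; the only thing requiring the slightest care is the passage from spanning sets to the full descending series, i.e. making sure one is allowed to assume the inner element $u$ already lies in $\pi_V(\g^{k-1})$. The second formulation above sidesteps this by doing the induction at the level of the subspaces $\V^k$ directly, using bilinearity, and it is the version I would write out.
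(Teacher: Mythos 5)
Your argument is correct, but it takes a genuinely different route from the paper. The paper's proof is a two-line application of Engel's theorem: since $[v,H]\subseteq H$, the operator $\ad(v)$ is block upper-triangular with respect to the decomposition $\g=H\oplus V$, so its nilpotency forces the diagonal block $\pi_V\circ\ad(v)|_V=\ad_{\V}(v)$ to be nilpotent for every $v\in V$, and Engel's theorem then gives nilpotency of $\V$. You instead track the lower central series directly, proving $\V^k\subseteq\pi_V(\g^k)$ by induction; the key identity $\pi_V[v,\pi_V(g)]=\pi_V[v,g]$ for arbitrary $g\in\g$ is valid precisely because $[v,\pi_H(g)]\in H$, and your second formulation (induction on the subspaces $\V^k$ rather than on left-normed brackets) correctly sidesteps the spanning-set issue you flag. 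Your version is more elementary, avoiding Engel, and yields slightly more: the nilpotency class of $\V$ is bounded by that of $\g$. The paper's version is shorter and requires no bookkeeping of iterated brackets. Both are complete proofs.
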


\begin{proof}  If $\g$ is nilpotent, then for all $v\in V$ the map $\ad(v)$ is nilpotent  and hence the map $\pi_V\circ\ad(v)$ is nilpotent. So $\g$ is nilpotent by Engel's theorem.
 \end{proof}

\begin{example}\label{Ex:h3x2} Consider the Lie algebra $\g=\h_3\oplus\h_3$ of dimension 6 formed from the direct sum of two copies of the 3-dimensional Heisenberg algebra $\h_3$:   $\g$ has  basis $x_i,y_i,z_i$, for $i=1,2$, and relations $[x_i,y_i]=z_i$.  We give two examples:

(a). Let $H=\Span(x_1,y_1,z_2)$ and $V=\Span(x_2,y_2,z_1)$. Clearly, 
$(H,V)$ is an open respectful decomposition of $\g$. In this case, 
the Lie algebra $\V$ is Abelian.

(b). Let $H=\Span(x_1,y_1,z_1+z_2)$ and $V=\Span(x_2,y_2,z_1-z_2)$. Clearly, 
$(H,V)$ is an open respectful decomposition of $\g$. In this case, 
$\V$ is isomorphic to the 3-dimensional Heisenberg algebra $\h_3$.
\end{example}

%%%%%%%%%%%%%%%%%%%%%%%%%%%%%%%%%%%%%%%%%%%%%%%%%%%%

\section{Open respectful decompositions with $\g$ nilpotent and $\dim V=2$}\label{S:dimH4}

\begin{proof}[Proof of Theorem \ref{T:v2}]
  Suppose $(H,V)$ is an open respectful decomposition of $\g$ with $\dim V = 2$. Assertion~\eqref{it:VV} follows by definition. Moreover, from Lemma~\ref{L:nil} we have $[V,V] \subseteq H$. Then $[V,\g] \subseteq H$ which implies assertion~\eqref{it:VVg}. If $[V,\g] = [\g,\g]$, then $[\g,\g] \subseteq H$, and so $[H,H] \subseteq H$. This contradiction proves assertion~\eqref{it:Vg}. 
  
  Conversely, suppose $V \subseteq \g$ is a subspace of dimension 2 that satisfies \eqref{it:VV}, \eqref{it:VVg} and \eqref{it:Vg}. If $V$ were a subalgebra, then by \eqref{it:VVg} we would have had $[V,V]=0$ contradicting~\eqref{it:VV}. To construct $H$, consider the quotient linear space $W=\g/[V,\g]$. Let $\pi:\g \to W$ be the natural projection, and denote $m = \dim W$. Note that the codimension of $[\g,\g]$ in $\g$ is at least $2$, as $\g$ is nilpotent. As $[V,\g]$ is a proper subspace of $[\g,\g]$, we get $m \ge 3$. The subspace $\pi(V) \subseteq W$ has dimension $2$ by \eqref{it:VVg}, and so the set $S_1$ of $(m-2)$-dimensional subspaces which complement $\pi(V)$ to $W$ is an open and dense subset in the Grassmannian $G(m-2,W)$. Moreover, by \eqref{it:Vg} we have $\pi [\g,\g] \ne 0$, and so the set $S_2$ of $(m-2)$-dimensional subspaces which do not contain $\pi [\g,\g]$ is also open and dense in $G(m-2,W)$. Take any element $H' \in S_1 \cap S_2$ and define $H = \pi^{-1}(H')$. We have $H \oplus V = \g$, as $\pi(V) \oplus H' = W$ and the restriction of $\pi$ to $V$ is injective. Furthermore, $[V,H] \subseteq [V,\g] =\pi^{-1}(0) \subseteq H$, and $\pi [H,H] \subseteq \pi [\g,\g] \not\subseteq H'=\pi(H)$, as $H' \in S_2$. This implies that $[H,H] \not\subseteq H$ which completes the proof.
\end{proof} 
We remark that any linear subspace that complements $V$ in $\g$, contains $[V,\g]$ and does not contain $[\g,\g]$ can serve as $H$.

As an application of Theorem~\ref{T:v2}, we prove the following proposition which covers a fairly large class of nilpotent algebras.

\begin{proposition} \label{P:derna}
  Suppose the derived algebra $[\g,\g]$ of a nilpotent algebra $\g$ is non-Abelian, and that $\dim [\g,\g] - \dim [\g,[\g,\g]] \ge 2$. Then $\g$ admits an open respectful decomposition $(H,V)$ with $\dim V = 2$.
\end{proposition}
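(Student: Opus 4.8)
The plan is to apply Theorem~\ref{T:v2}, which reduces the problem to exhibiting a 2-dimensional subspace $V \subseteq [\g,\g]$ — or more precisely, a 2-dimensional subspace $V$ of $\g$ — satisfying the three conditions \eqref{it:VV}, \eqref{it:VVg}, \eqref{it:Vg}. The natural place to look for $V$ is inside $[\g,\g]$ itself: if $v_1, v_2 \in [\g,\g]$, then automatically $[V,\g] \subseteq [\g,[\g,\g]]$, which immediately helps with both \eqref{it:VVg} (since we want $V \cap [V,\g] = 0$, and $[V,\g]$ lives in the smaller space $[\g,[\g,\g]]$) and \eqref{it:Vg} (since $[V,\g] \subseteq [\g,[\g,\g]] \subsetneq [\g,\g]$, the last inclusion being strict because $\g$ is nilpotent and $[\g,\g] \ne 0$, as $[\g,\g]$ is non-Abelian hence nonzero).

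First I would use the hypothesis that $[\g,\g]$ is non-Abelian to find $v_1, v_2 \in [\g,\g]$ with $[v_1, v_2] \ne 0$; this will give condition \eqref{it:VV} for $V = \Span(v_1, v_2)$, provided $v_1, v_2$ are linearly independent, which they are since $[v_1,v_2]\ne 0$. The key remaining issue is to arrange simultaneously that $V \cap [\g,[\g,\g]] = 0$, which forces \eqref{it:VVg} because $[V,\g] \subseteq [\g,[\g,\g]]$. Here is where the dimension hypothesis $\dim[\g,\g] - \dim[\g,[\g,\g]] \ge 2$ enters: the image of $[\g,\g]$ in the quotient $[\g,\g]/[\g,[\g,\g]]$ has dimension at least $2$, and the bracket descends to a well-defined alternating bilinear map $\overline{[\,\cdot\,,\cdot\,]} : [\g,\g]/[\g,[\g,\g]] \times [\g,\g]/[\g,[\g,\g]] \to [\g,[\g,\g]]$ — here I need to check that $[[\g,\g],[\g,[\g,\g]]] \subseteq [\g,[\g,\g]]$, which is clear since $[\g,[\g,\g]]$ is invariant under $\ad$ of anything. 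So I want a $2$-dimensional subspace of the quotient space $U := [\g,\g]/[\g,[\g,\g]]$ on which this alternating form is nonzero, and then lift it.

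The main obstacle, and the place requiring a genuine (if short) argument, is this: the alternating bilinear map $\overline{[\,\cdot\,,\cdot\,]}$ on $U$ need not be nonzero on \emph{every} 2-plane, so I must argue that it is nonzero on \emph{some} 2-plane, using only that it is not identically zero (which follows from $[\g,\g]$ being non-Abelian, since if $\overline{[\,\cdot\,,\cdot\,]} \equiv 0$ then $[[\g,\g],[\g,\g]] \subseteq [\g,[\g,\g]]$ — wait, this needs care: $[[\g,\g],[\g,\g]]$ is already contained in $[\g,[\g,\g]]$, so non-Abelianity of $[\g,\g]$ does \emph{not} by itself make the quotient form nonzero). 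The correct approach is instead: pick $v_1, v_2 \in [\g,\g]$ with $[v_1,v_2] \ne 0$ directly, set $W_0 = \Span(v_1,v_2)$, and note $[v_1,v_2] \in [\g,[\g,\g]]$. Then I want to perturb $v_1, v_2$ within $[\g,\g]$ so that the span becomes transverse to $[\g,[\g,\g]]$ while keeping $[v_1,v_2]\ne 0$. Since $\dim[\g,\g]/[\g,[\g,\g]] \ge 2$, the set of 2-planes $V \subseteq [\g,\g]$ with $V \cap [\g,[\g,\g]] = 0$ is open and dense (nonempty) in the Grassmannian $G(2,[\g,\g])$, by the same Grassmannian genericity argument used in the proof of Theorem~\ref{T:v2}; and the set of 2-planes $V \subseteq [\g,\g]$ with $[V,V] \ne 0$, i.e. $V \not\subseteq$ (some maximal abelian subspace), is also open and dense — it is nonempty precisely because $[\g,\g]$ is non-Abelian. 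Two open dense subsets of an irreducible variety intersect, so there is a $2$-plane $V \subseteq [\g,\g]$ satisfying \eqref{it:VV} and \eqref{it:VVg}; and \eqref{it:Vg} holds automatically since $[V,\g] \subseteq [\g,[\g,\g]] \subsetneq [\g,\g]$. Applying Theorem~\ref{T:v2} then produces the desired $H$, completing the proof. I would write this up in about a page, with the Grassmannian density claims stated cleanly and a one-line justification that $\{V : [V,V] \ne 0\}$ is nonempty.
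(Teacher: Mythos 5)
Your proof is correct and follows essentially the same route as the paper: place $V$ inside $[\g,\g]$ so that $[V,\g]\subseteq[\g,[\g,\g]]$ takes care of conditions \eqref{it:VVg} and \eqref{it:Vg} of Theorem~\ref{T:v2}, and use a Grassmannian genericity argument (transversality to $[\g,[\g,\g]]$ is open and dense by the dimension hypothesis, while non-Abelianness of $[\g,\g]$ makes $[V,V]\ne 0$ on a nonempty, hence dense, open set) to meet all three conditions at once. The paper phrases the last step contrapositively---if $[V,V]=0$ for every transverse $2$-plane then $[\g,\g]$ would be Abelian---but this is the same argument, and your self-correction discarding the (identically zero) induced form on $[\g,\g]/[\g,[\g,\g]]$ is the right call.
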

Note that $\dim [\g,\g] - \dim [\g,[\g,\g]] \ge 1$ for any nilpotent, non-Abelian Lie algebra $\g$.
\begin{proof}
  Choose two vectors $v_1, v_2 \in [\g,\g]$ which are linearly independent modulo $[\g,[\g,\g]]$ and define $V=\Span(v_1, v_2)$. The set of so constructed 2-dimensional subspaces $V \subseteq [\g,\g]$ is open and dense in the Grassmanian $G(2,[\g,\g])$, and so if $[V,V]=0$ for all of them, then $[\g,\g]$ is Abelian contradicting the assumption. Therefore without loss of generality we can assume that $[V,V] \ne 0$, and so $V$ satisfies condition~\eqref{it:VV} of Theorem~\ref{T:v2}. Moreover, as $V \subseteq [\g,\g]$, we have $[V,\g] \subseteq [\g,[\g,\g]]$, which shows that $V$ also satisfies conditions~\eqref{it:VVg} and \eqref{it:Vg} of Theorem~\ref{T:v2}.
\end{proof}
A similar construction works if we replace $[\g,\g]$ by any other term of the central descending series of $\g$ (except for $\g$ itself), and $[\g,[\g,\g]]$, by the subsequent term. %We note that although the assumptions in Proposition~\ref{P:derna} are relatively mild in higher dimensions, the algebras $\g$ which satisfy them must be at least 4-step nilpotent.  

\begin{remark}\label{R:nonV2} 
Examples of Lie algebras admitting no open respectful decomposition with $(H,V)$ with $\dim V =2$ include algebras whose derived algebra is 1-dimensional (by Lemma~\ref{L:derv}) and algebras having a codimension 1 Abelian ideal (by  Lemma~\ref{L:f4con}). There are many other 2-step nilpotent Lie algebras having no open respectful decomposition $(H,V)$ with $\dim V =2$, for example, the nonsingular 2-step nilpotent Lie algebras \cite{Ebe}.
\end{remark}

% include factor by ideal; include direct sum (helps with 4 + 2)?
% anything to say about 2-step?
% 6 22 unique up to \pm and adding the centre V; depends on the field; and 6 24 too

\begin{table}
\begin{tabular}{ c | c | c | c | c }
  \hline
Algebra & Relations & $[\g,\g]$ basis & $V$ basis & $[V,\g]$ basis \\ \hline
$L_{6,10}$ & $[x_1,x_2]=x_3,[x_1,x_3]=x_6,[x_4,x_5]=x_6$ & $x_3,x_6$ & $x_4,x_5$ & $x_6$ \\\hline
$L_{6,11}$ & $[x_1,x_2]=x_3,[x_1,x_3]=x_4,[x_1,x_4]=x_6,$ & $x_3,x_4,x_6$ & $x_2,x_5$ & $x_3,x_6$ \\
 & $[x_2,x_3]=x_6,[x_2,x_5]=x_6$ &  &  &   \\ \hline
$L_{6,12}$ & $[x_1,x_2]=x_3,[x_1,x_3]=x_4,$ & $x_3,x_4,x_6$ & $x_2,x_5$ & $x_3,x_6$  \\
 & $[x_1,x_4]=x_6,[x_2,x_5]=x_6$ &  &  &     \\ \hline
$L_{6,13}$ & $[x_1,x_2]=x_3,[x_1,x_3]=x_5,[x_2,x_4]=x_5,$ & $x_3,x_5,x_6$ & $x_3,x_4$ & $x_5,x_6$  \\
 & $[x_1,x_5]=x_6,[x_3,x_4]=x_6$ &  &  &     \\ \hline
$L_{6,14}$ & $[x_1,x_2]=x_3, [x_1,x_3]=x_4, [x_1,x_4]=x_5,$ & $x_3,x_4,x_5,x_6$ & $x_2+x_3,x_4$ & $x_3+x_4,x_5,x_6$  \\
 & $[x_2,x_3]=x_5, [x_2,x_5]=x_6,[x_3,x_4]=-x_6$ &  &  &     \\ \hline
$L_{6,15}$ & $[x_1,x_2]=x_3, [x_1,x_3]=x_4, [x_1,x_4]=x_5,$ & $x_3,x_4,x_5,x_6$ & $x_2,x_4$ & $x_3,x_5,x_6$  \\
 & $[x_2,x_3]=x_5,[x_1,x_5]=x_6,[x_2,x_4]=x_6$ &  &  &    \\ \hline
$L_{6,16}$ & $[x_1,x_2]=x_3, [x_1,x_3]=x_4, [x_1,x_4]=x_5,$ & $x_3,x_4,x_5,x_6$ & $x_2,x_5$ & $x_3,x_6$  \\
 & $[x_2,x_5]=x_6, [x_3,x_4]=-x_6$ &  &  &     \\ \hline
$L_{6,17}$ & $[x_1,x_2]=x_3, [x_1,x_3]=x_4, [x_1,x_4]=x_5,$ & $x_3,x_4,x_5,x_6$ & $x_2+x_4,x_3$ & $x_3+x_5,x_4,x_6$  \\
 & $[x_1,x_5]=x_6, [x_2,x_3]= x_6$ &  &  &     \\ \hline
$L_{6,19}(\epsilon)$ & $[x_1,x_2]=x_4,[x_1,x_3]=x_5,$ & $x_4,x_5,x_6$ & $x_2,x_4+x_5$ & $x_4,x_6$  \\
$\epsilon=0, \pm 1$ & $[x_2,x_4]=x_6, [x_3,x_5]=\epsilon x_6$ &  &  &    \\ \hline
$L_{6,20}$ & $[x_1,x_2]=x_4,[x_1,x_3]=x_5,$ & $x_4,x_5,x_6$ & $x_2,x_4+x_5$ & $x_4,x_6$  \\
 & $[x_1,x_5]=x_6, [x_2,x_4]= x_6$ &  &  &   \\ \hline
$L_{6,21}(\epsilon)$ & $[x_1,x_2]=x_3, [x_1,x_3]=x_4, [x_2,x_3]=x_5,$ & $x_3,x_4,x_5,x_6$ & $x_1,x_4+x_5$ & $x_3,x_4,x_6$  \\
$\epsilon=0, \pm 1$  & $[x_1,x_4]=x_6, [x_2,x_5]= \epsilon x_6$ &  &  &    \\ \hline
$L_{6,22}(1)$ & $[x_1,x_2]=x_5,  [x_1,x_3]=x_6,$ & $x_5,x_6$ & $x_1 + x_4,x_2- x_3$ & $x_5 - x_6$   \\
 & $ [x_2,x_4]= x_6,  [x_3,x_4]=x_5$ &  &  &   \\\hline
$L_{6,24}(1)$  & $[x_1,x_2]=x_3,[x_1,x_3]=x_5,[x_1,x_4]= x_6,$ & $x_3,x_5,x_6$ & $x_1+ x_2,x_3 + x_4$ & $x_3,x_5+x_6$ \\
  & $[x_2,x_3]=x_6,[x_2,x_4]=x_5$ &  &  &    \\  \hline
%
%$L_{6,22}(\epsilon)$ & $[x_1,x_2]=x_5,  [x_1,x_3]=x_6,$ & $x_5,x_6$ & $\alpha x_1 + x_4,x_2-\alpha x_3$ & $x_5 - \alpha x_6$   \\
%$\epsilon = \alpha^2 >0$ & $ [x_2,x_4]=\epsilon x_6,  [x_3,x_4]=x_5$ &  &  &   \\\hline
%
%$L_{6,24}(\epsilon)$  & $[x_1,x_2]=x_3,[x_1,x_3]=x_5,[x_1,x_4]=\epsilon x_6,$ & $x_3,x_5,x_6$ & $x_1+ \alpha x_2,\alpha x_3+ x_4$ & $x_3,x_5+\alpha x_6$ \\
%$\epsilon = \alpha^2 >0$  & $[x_2,x_3]=x_6,[x_2,x_4]=x_5$ &  &  &  &   \\  \hline
\end{tabular}
\bigskip
\caption{The 17 nilpotent 6-dimensional algebras admitting an open respectful decomposition $(H,V)$ with $\dim V = 2$}\label{TableV2}
\end{table}

\begin{proof}[Proof of Theorem \ref{T:dimV=2}]
 See Table~\ref{TableV2}, which uses the classification and notation of de Graaf \cite{deG}. For each of these 17 algebras,  Table~\ref{TableV2} gives the subspace $V$; one can take any linear subspace which complements $V$ in $\g$, contains $[V,\g]$ and does not contain $[\g,\g]$ as $H$.
  It is easy to verify that for every algebra $\g$ listed in Table~\ref{TableV2}, the corresponding subspace $V$ satisfies all three conditions in Theorem~\ref{T:v2}.  To see that the other algebras in de Graaf's classification do not possess an open respectful decomposition, we first note that the algebras $\R^2 \oplus L_{4,1}, \, \R^2 \oplus L_{4,2}$ and $\R \oplus L_{5,4}$ have derived algebras of dimension at most 1, and the algebras $\R^2 \oplus L_{4,3}, \, \R \oplus L_{5,7}, \, \R \oplus L_{5,8}, \, L_{6,18}$ and $L_{6,25}$ have codimension 1 Abelian ideal.
  
  In the remaining cases (for algebras $\R \oplus L_{5,5}, \, \R \oplus L_{5,6}, \, \R \oplus L_{5,9}, \, L_{6,22}(0), \, L_{6,22}(-1)$, $L_{6,23}, \, L_{6,24}(0)$, $L_{6,24}(-1)$ and $L_{6,26}$), one can verify that conditions~\eqref{it:VV} and \eqref{it:Vg} of Theorem~\ref{T:v2} cannot simultaneously be satisfied. To facilitate the calculation, we note that it suffices to check this for 2-dimensional subspaces $V$ lying in the span of the non-central coordinate vectors $x_i$ in de Graaf's presentations.
  
  The calculations for all the algebras follow the same scheme, which we demonstrate for  the two most involved cases, for the algebras $L_{6,24}(\epsilon), \, \epsilon = 0, -1$. From the defining relations for these algebras (given in the last row of Table~\ref{TableV3}) we see that $[\g,\g]=\Span(x_3,x_5,x_6)$, and that $Z(\g)=\Span(x_5,x_6)$. For a vector $v = \sum_{i=1}^{6} \mu_i x_i$ we obtain $[v,\g] = \Span(\mu_2 x_3 +\mu_3 x_5 + \epsilon \mu_4 x_6, \mu_1 x_3 -\mu_4 x_5 - \mu_3 x_6, \mu_1 x_5 + \mu_2 x_6, \mu_2 x_5 + \epsilon \mu_1 x_6)$. For $[v,\g]$ not to 
   contain $[\g,\g]$, we must have $\mu_2^2 - \epsilon \mu_1^2=0$. If $\epsilon = -1$, then any such $v$ lies in $\Span(x_3,x_4,x_5,x_6)$, which is an Abelian ideal, so for any 2-dimensional subspace of $L_{6,24}(-1)$, either condition~\eqref{it:VV} or condition \eqref{it:Vg} of Theorem~\ref{T:v2} is violated. If $\epsilon = 0$, we have $\mu_2=0$. Two vectors $v_1= \sum_{i=1}^{6} \mu_i x_i, v_2= \sum_{i=1}^{6} \nu_i x_i$ with $\mu_2=\nu_2=0$ do not commute when $\mu_1\nu_3 - \mu_3 \nu_1 \ne 0$. But then $[v_1,\g] + [v_2,\g] = [\g,\g]$, so no 2-dimensional subspace $V \subseteq L_{6,24}(0)$ may satisfy both condition~\eqref{it:VV} and condition~\eqref{it:Vg} of Theorem~\ref{T:v2}.
\end{proof}

It is curious to observe the sensitivity of the existence of the decomposition to the ground field: the algebras $L_{6,22}(-1)$ and $L_{6,24}(-1)$ over $\mathbb{C}$ are isomorphic to the algebras $L_{6,22}(1)$ and $L_{6,24}(1)$, respectively, and as such, admit open respectful decompositions $(H,V)$ with $\dim V = 2$.

%%%%%%%%%%%%%%%%%%%%%%%%%%%%%%%%%%%%%%%%%%%

\section{Open respectful decompositions with $\g$ nilpotent and $\dim H=3$}\label{S:dimH3}

In this section we will suppose that  $\g$ is a nilpotent Lie algebra  having an open respectful decomposition $(H,V)$, and furthermore that $H$ has dimension 3. We have $\dim H_V=1$ by Lemma~\ref{L:JI}\eqref{L:KH}. Choose $h_3\in H$  so that $H_V =\Span(h_3)$. 

\begin{lemma}\label{L:VHV}
 If $\g$ is nilpotent and $H$ has dimension 3,  then  $V$ commutes with $H_V$.
So the subspace $\bV:=V+H_V$ is a subalgebra of $\g$ and $H_V$ is contained in the center of $\bV$.
\end{lemma}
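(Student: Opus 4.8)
The plan is to combine the structural facts already extracted from the Jacobi identity in Lemma~\ref{L:JI} with the nilpotency of $\g$ in order to pin down the action of $V$ on the one-dimensional space $H_V$. Recall that since $\dim H = 3$ and $(H,V)$ is open, Lemma~\ref{L:JI}\eqref{L:KH} gives $H_V = K_H$ with $\dim H_V = 1$, so $H_V = \Span(h_3)$ as fixed above.

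First I would invoke Lemma~\ref{L:JI}\eqref{L:VKH}, which asserts $[V,K_H] \subseteq K_H$. Since $K_H = H_V = \Span(h_3)$, this means that for every $v \in V$ we may write $[v,h_3] = \lambda(v)\,h_3$ for some scalar $\lambda(v) \in \R$; that is, $h_3$ is an eigenvector of $\ad(v)$ with eigenvalue $\lambda(v)$. Because $\g$ is nilpotent, the operator $\ad(v)$ is nilpotent for each $v \in V$, and a nilpotent operator has no nonzero eigenvalue. Hence $\lambda(v) = 0$ for all $v \in V$, i.e. $[V,H_V] = 0$, which is exactly the statement that $V$ commutes with $H_V$.

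The subalgebra claim then follows by a direct check of brackets on $\bV = V + H_V$. By the very definition of $H_V$, for $v,v' \in V$ we have $\pi_H[v,v'] \in H_V$, so $[v,v'] = \pi_H[v,v'] + \pi_V[v,v'] \in H_V + V = \bV$; we have just shown $[V,H_V] = 0$; and $[H_V,H_V] = 0$ as $H_V$ is one-dimensional. Thus $[\bV,\bV] \subseteq \bV$, so $\bV$ is a subalgebra, and since $H_V$ commutes with all of $\bV$ it lies in the center of $\bV$. (One also has $\dim\bV = 3$ because $H_V \subseteq H$ and $H \cap V = 0$, although this is not needed for the statement.)

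I do not expect a genuine obstacle here: the whole content is the elementary observation that a one-dimensional $\ad(V)$-invariant subspace of a nilpotent Lie algebra must be annihilated by $V$. The only point requiring care is correctly importing the identification $H_V = K_H$ and the invariance $[V,K_H] \subseteq K_H$ from Lemma~\ref{L:JI}; once those are in hand the argument is a two-line computation.
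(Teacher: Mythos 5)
Your proof is correct and follows essentially the same route as the paper: both reduce the statement to showing $[v,h_3]\in K_H=H_V=\Span(h_3)$ and then kill the eigenvalue using nilpotency of $\ad(v)$. The only difference is that you obtain the invariance $[V,K_H]\subseteq K_H$ by citing Lemma~\ref{L:JI}\eqref{L:VKH}, whereas the paper repeats the underlying Jacobi-identity computation inline; your version is slightly more economical but not substantively different.
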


\begin{proof} Let $h\in H, v\in V$. Since $K_H =H_V=\Span(h_3)$, we have  $[h_3,h]\in H$, so $[v,[h_3,h]]\in H$. Similarly, $[h,v]\in H$ so $[h_3,[h,v]]\in H$. Hence, by the Jacobi identity,
\[
0=\pi_V([h,[v,h_3]]+[h_3,[h,v]]+[v,[h_3,h]])=\pi_V[h,[v,h_3]],
\]
so $[v,h_3]\in K_H$. So $[v,h_3]$ is a multiple of $h_3$. Hence as  $\g$ is nilpotent, $v$ commutes with $h_3$. The required result follows.
\end{proof}

Since $h_3\in H_V=K_H$, we have $[h_3,h]\in H$ for all $h\in H$.
Since $V$ respects $H$, we have $[v,h]\in H$ for all $v\in V,h\in H$.
Hence the adjoint  action of  $\bV$ respects $H$. Furthermore,  by Lemma~\ref{L:VHV}, we have $h_3\in Z(\bV)$; i.e., the adjoint  action of  $\bV$ on $H$ sends $h_3$ to $0$.
Let $\underline H$ denote the quotient vector space $H/\Span(h_3)$. The adjoint  action of  $\bV$ induces an action of $\bV$ of $\underline H$. Since $\bV$ is nilpotent, there is a basis $\{\underline h_1,\underline h_2\}$ for $\underline H$ on which the action is lower triangular. In other words, there is  a basis  $\{h_1,h_2,h_3\}$ for $H$ for which 
\begin{equation}\label{E:tria}
[z,h_1]\in \Span(h_2,h_3),\qquad [z,h_2]\in \Span(h_3),\qquad [z,h_3] =0,
\end{equation}
for all $z\in \bV$. In particular, as $h_3\in  \bV$, we have $[h_3,h_2]\in \Span(h_3)$, and so, as $\bH$ is nilpotent, $[h_3,h_2]=0$. 
Notice also that as $h_3\in H_V$ and $H$ is not a subalgebra, we have $[h_1,h_2]\not\in H$. We will use the basis $\{h_1,h_2,h_3\}$ for $H$   for the rest of this paper.

\begin{theorem}\label{T:dim>=6} If $\g$ is nilpotent Lie algebra and has an open respectful decomposition $(H,V)$,  then $\dim \g\ge 6$.
\end{theorem}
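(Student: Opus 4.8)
The plan is to combine the dimension lower bounds already available with the structural facts collected in Lemma~\ref{L:JI}. We know $\dim H\ge 3$ by Lemma~\ref{L:JI}\eqref{L:H} and $\dim V\ge 2$ since $(H,V)$ is open. So $\dim\g\ge 5$, and the whole content of the theorem is to rule out $\dim\g=5$. In that case we must have $\dim H=3$ and $\dim V=2$. First I would invoke Lemma~\ref{L:JI}\eqref{L:KH} to get $\dim H_V=1=\dim K_H$, so $H_V=K_H=\Span(h_3)$, and then use the normalization \eqref{E:tria} together with Lemma~\ref{L:VHV}: there is a basis $\{h_1,h_2,h_3\}$ of $H$ with $[h_3,h_2]=0$, $[h_3,h_1]\in\Span(h_2,h_3)$ but actually (nilpotency again, as $[h_3,h_1]$ modulo $\Span(h_3)$ must be a nilpotent-action image — here since $\bV$ is nilpotent and $H/\Span(h_3)$ is 2-dimensional with $h_3$ acting by a lower triangular nilpotent matrix, $[h_3,h_1]\in\Span(h_3)$, hence $=0$). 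So $h_3$ is central in $\bV=V+\Span(h_3)$, a 3-dimensional nilpotent subalgebra.

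Next I would analyze the two remaining brackets that are not yet constrained: $[h_1,h_2]$ and $[h_1,h_3]$ are the only ones with a possible $V$-component among $H$-brackets, but in fact by definition of $K_H$ and since $h_3\in K_H$, only $[h_1,h_2]$ can fail to lie in $H$; write $[h_1,h_2]=w+h$ with $0\ne w\in V_H$ and $h\in H$, where $\dim V_H=1$ by Lemma~\ref{L:JI}\eqref{L:ker} (since $V_H=\Span(\pi_V[h_1,h_2])$). Then $\bH=H+V_H$ is a $4$-dimensional ideal of $\g$ (Lemma~\ref{L:JI}\eqref{L:VH2}), and the quotient $\g/\bH$ is $1$-dimensional. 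Pick $v_1\in V$ spanning $V$ modulo $V_H$, i.e., $V=\Span(v_1)\oplus V_H$ after adjusting, with $\{v_1\}$ mapping to a basis of $\g/\bH$. The key observation is that $\ad(v_1)$ preserves $\bH$ and acts on it nilpotently; I would then exploit the Jacobi identity applied to $v_1,h_1,h_2$ (as in the proof of Theorem~\ref{T:derv}, equations of the form \eqref{E:vhht3b}) to force a contradiction between the nilpotency of $\g$ and the nonvanishing of $\pi_V[h_1,h_2]$. Concretely, $[v_1,h_i]\in H$ for all $i$, and iterating $\ad(v_1)$ on $h_1,h_2$ stays in $H$, so the $V$-component of $[v_1,[h_1,h_2]]$ computed two ways must vanish; but $\pi_V[v_1,[h_1,h_2]]=\pi_V[v_1,w]=[v_1,w]_{\V}$, and one must show this is nonzero, or else track how $w$ appears. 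The finiteness of the descending central series forces the $V$-part to eventually drop, and counting the available dimensions ($\dim\V=2$, $\dim V_H=1$) leaves no room.

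The main obstacle, I expect, is organizing the Jacobi-identity bookkeeping cleanly enough to extract the contradiction without a long case split on whether $\V$ is Abelian or not, and on where $h$ sits relative to $\Span(h_2,h_3)$ — essentially a miniature version of the two-case argument in Theorem~\ref{T:derv} but now without the unimodularity crutch. The nilpotency hypothesis should be strong enough to replace unimodularity: any nonzero trace of $\ad(v_1)$ on an invariant subspace would already violate nilpotency, so the analogue of the ``$\operatorname{tr}\ad(v_1)=1$ or $2$'' contradictions becomes ``$\ad(v_1)$ has a nonzero eigenvalue,'' which is even easier. I would therefore structure the proof as: (i) reduce to $\dim H=3,\dim V=2$; (ii) fix the basis via \eqref{E:tria} and Lemma~\ref{L:VHV}; (iii) show $\dim V_H=1$ and $\bH$ is a codimension-one ideal; (iv) use Jacobi on $(v_1,h_1,h_2)$ together with nilpotency of $\ad(v_1)\!\restriction_{\bH}$ to contradict $\pi_V[h_1,h_2]\ne 0$. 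Alternatively, and perhaps more slickly, one could note that a $5$-dimensional nilpotent $\g$ with a codimension-one ideal $\bH$ of dimension $4$ containing $H$, combined with the constraints, falls into the short list of $5$-dimensional nilpotent Lie algebras, and check directly that none admits an open respectful decomposition — but I would prefer the intrinsic Jacobi-identity argument to keep the proof classification-free.
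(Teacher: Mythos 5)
Your reduction to $\dim\g=5$, $\dim H=3$, $\dim V=2$, the adapted basis from \eqref{E:tria}, and the choice of the Jacobi triple $\{v_1,h_1,h_2\}$ with $v_1$ complementing $V_H$ in $V$ all match the paper's proof. But step (iv), where the contradiction is supposed to appear, does not work as you describe it, and the appeal to ``the finiteness of the descending central series'' does not fill the hole. If you compare only the $V$-components of $[v_1,[h_1,h_2]]$ computed two ways, you get $0=0$: on the Jacobi side, $[v_1,h_1]\in\Span(h_2,h_3)$ and $[v_1,h_2]\in\Span(h_3)$ force both terms $[[v_1,h_1],h_2]$ and $[h_1,[v_1,h_2]]$ into $H$ (since $h_3\in K_H$), while on the direct side $\pi_V[v_1,w]=0$ holds automatically because $\V$ is $2$-dimensional nilpotent, hence Abelian. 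So no contradiction comes from the $V$-component.

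The actual contradiction lives in the $h_3$-component, and to extract it you need two facts you never establish. First, $h_3\in Z(\g)$ --- and your shortcut here is incorrect: the triangularization \eqref{E:tria} only gives $[h_3,h_1]\in\Span(h_2,h_3)$, not $[h_3,h_1]\in\Span(h_3)$, since a strictly lower triangular action on the $2$-dimensional space $H/\Span(h_3)$ sends $\underline h_1$ to a multiple of $\underline h_2$, not to $0$. The paper instead proves $Z(\g)=\Span(h_3)$ by combining the nontriviality of the center of a nilpotent algebra with the constraint $Z(\g)\subseteq\Span(v_1,v_2,h_3)$. Second, you need $\pi_H[h_1,h_2]\in\Span(h_3)$; otherwise the term $[v_1,\pi_H[h_1,h_2]]$ could a priori contribute an $h_3$-component cancelling the one coming from $[v_1,w]=\pm h_3$. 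The paper obtains this from a trace argument applied to $\ad(h_1)$ and $\ad(h_2)$ (their traces vanish by nilpotency), not to $\ad(v_1)$ as you propose. Once both facts are in place, $[h_1,h_2]=w+fh_3$, so $[v_1,[h_1,h_2]]=[v_1,w]=\pm h_3\neq 0$, whereas the Jacobi identity together with $[h_3,h_1]=[h_3,h_2]=0$ gives $[v_1,[h_1,h_2]]=0$; that is the contradiction. Your fallback of checking the classified list of $5$-dimensional nilpotent Lie algebras would work, but the intrinsic argument as you have sketched it is incomplete.
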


\begin{proof} From Lemma~\ref{L:JI}\eqref{L:H}, we have $\dim H\ge 3$ and so $\dim \g\ge 5$. Suppose that $\dim \g=5$ and $\dim H=3$. 
As  $\V$ is Abelian by Lemma~\ref{L:nil}, and $H_V =\Span(h_3)$, we may choose a basis $\{v_1,v_2\}$ for $V$ so that $[v_1,v_2]=h_3$ and 
$\pi_V[h_1,h_2]=v_1$.
Thus $\bH=\Span(h_1,h_2,h_3,v_1)$. Since $\g$  is nilpotent, its center $Z(\g)$ is nonzero. As   
$\pi_V[h_1,h_2]\not=0$ and $h_3\in K_H$, and as $V$ respects $H$, it follows that  $Z(\g)\subseteq \Span(v_1,v_2,h_3)$. As $[v_1,v_2]=h_3$, one has $Z(\g)=\Span(h_3)$. 
Indeed, if  $av_1+bv_2+ch_3$ is a nonzero element of $Z(\g)$, for some constants $a,b,c$, then obviously $c\not=0$, so let us take $c=1$. Then $0=[av_1+bv_2+h_3,v_2]=ah_3+[h_3,v_2]=ah_3$, by \eqref{E:tria}, and hence $a=0$. Similarly, $b=0$. So $Z(\g)=\Span(h_3)$.

Let $[h_1,h_2]=v_1+dh_1+eh_2+fh_3$ for some $d,e,f\in\R$. 
Since $h_3\in Z(\g)$, we have  $[h_1,h_3]=0$. From \eqref{E:tria} we have $[h_1,v_1]\in \Span(h_2,h_3)$. 
Thus $\tr(\ad(h_1))=e$. So, as $\bH$ is nilpotent and hence unimodular, $e=0$.
Similarly,  $[h_2,h_3]=0$ and from \eqref{E:tria} we have $[h_2,v_1]\in \Span(h_3)$. 
Hence $\tr(\ad(h_2))=d$ and so $d=0$. Thus $[h_1,h_2]=v_1+fh_3$.
Consequently, as $h_3\in Z(\g)$,
\begin{equation}\label{E:fix1}
[[h_1,h_2],v_2]=[v_1+fh_3,v_2]=[v_1,v_2]=h_3.
\end{equation}
The Jacobi identity gives
\begin{equation}\label{E:fix2}
[[h_1,h_2],v_2]=[[h_1,v_2],h_2]+[h_1,[h_2,v_2]].
\end{equation}
But from \eqref{E:tria}, we have
$[h_1,v_2]\in \Span(h_2,h_3)$ and $ [h_2,v_2]\in \Span(h_3)$. So, as $[h_3,h_1]=[h_3,h_2]=0$, we obtain $[[h_1,h_2],v_2]=0$. But then \eqref{E:fix1} gives $h_3=0$, which is a contradiction.
\end{proof}

\begin{lemma}\label{L:cen}
 If $\V$ is nilpotent and $H$ has dimension 3,  then  $V_H\subseteq Z(\bV)$.
\end{lemma}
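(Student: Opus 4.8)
The plan is to show that for every $w \in V_H$ and every $z \in \bV = V + H_V$, the bracket $\pi_V[z,w]$ vanishes; this will place $w$ in the center of $\bV$ once we account for the $H_V$-component of the bracket as well. Recall that by construction $V_H = \Span(\pi_V[h_1,h_2])$ with $\dim V_H = 1$ (Lemma~\ref{L:JI}\eqref{L:KH} and \eqref{L:ker}), and that $\bV$ is a subalgebra with $h_3 \in Z(\bV)$ (Lemma~\ref{L:VHV}). So it suffices to take $w = \pi_V[h_1,h_2]$ and show $[z,w] = 0$ in $\bV$ for all $z$ in a spanning set of $\bV$.

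The key computation is a Jacobi-identity argument analogous to Lemma~\ref{L:JI}\eqref{L:VH1} but pushed one step further. Write $[h_1,h_2] = w + k$ where $k = \pi_H[h_1,h_2] \in H_V = \Span(h_3)$ by Lemma~\ref{L:JI}\eqref{L:ker}. For $z \in V$, apply the Jacobi identity to $z, h_1, h_2$:
\[
[z,[h_1,h_2]] = [[z,h_1],h_2] + [h_1,[z,h_2]].
\]
Since $V$ respects $H$, both $[z,h_1]$ and $[z,h_2]$ lie in $H$, so the right-hand side lies in $[H,H] \subseteq \bH = H + V_H$; applying $\pi_V$ and using \eqref{E:tria} (which constrains the brackets $[z,h_i]$ within the triangular basis) together with Lemma~\ref{L:VHV} ($z$ commutes with $h_3$), one computes that the $V$-component of the right-hand side vanishes. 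The left-hand side is $\pi_V[z, w+k] = \pi_V[z,w]$ since $[z,k]$ is a multiple of $[z,h_3] = 0$. Hence $\pi_V[z,w] = 0$ for all $z \in V$, i.e. $w$ is central in $\V$ — but this is already contained in Lemma~\ref{L:JI}\eqref{L:VH1} applied to the one-dimensional ideal $V_H$, so the genuinely new content is handling $z = h_3$, which is immediate: $[h_3, w]$ has vanishing $V$-component because $h_3 \in H_V = K_H$ kills the $V$-component of any bracket with an element of $H$, and $w + k = [h_1,h_2] \in [H,H]$, while $[h_3,k]$ is a multiple of $[h_3,h_3]=0$.

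The last thing to check is that $[z,w]$ has no $H_V$-component either, so that $[z,w]$ is genuinely $0$ in $\bV$ rather than merely killed by $\pi_V$. Here we use nilpotency of $\bV$: $[z,w]$ lies in $\bV$ (as $\bV$ is a subalgebra), its $\V$-component vanishes, so $[z,w] \in H_V = \Span(h_3)$; thus $\ad(z)$ restricted to the $\bV$-submodule $\Span(w, h_3)$ (note $h_3 \in Z(\bV)$) is nilpotent and sends $w \mapsto \lambda h_3 \mapsto 0$, forcing $\lambda = 0$ when $w \notin \Span(h_3)$, and if $w \in \Span(h_3)$ the statement is trivial since $h_3 \in Z(\bV)$.

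The main obstacle I anticipate is the bookkeeping in the second paragraph: correctly tracking the $V$-, $\Span(h_3)$-, and remaining $H$-components of $[[z,h_1],h_2] + [h_1,[z,h_2]]$ using the triangular normalization \eqref{E:tria}, and making sure the relation $[h_3,h_2]=0$ (established just after \eqref{E:tria}) and the commuting relation of Lemma~\ref{L:VHV} are invoked at the right places. Everything else is a routine application of the Jacobi identity plus Engel-type nilpotency arguments that are already used repeatedly in the preceding lemmas.
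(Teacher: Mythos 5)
Your opening computation agrees with the paper's and is correct: writing $w=\pi_V[h_1,h_2]$, the Jacobi identity together with the triangular normalization \eqref{E:tria} (namely $[v,h_1]\in\Span(h_2,h_3)$, $[v,h_2]\in\Span(h_3)$, $[h_3,h_2]=0$ and $h_3\in K_H$) gives $\pi_V[v,w]=\pi_V[[v,h_1],h_2]+\pi_V[h_1,[v,h_2]]=0$ for all $v\in V$, and the case $z=h_3$ is Lemma~\ref{L:VHV}. The gap is in your final paragraph. Having reduced to $[v,w]=\lambda h_3$, you assert that nilpotency of $\ad(v)$ on the invariant subspace $\Span(w,h_3)$ ``forces $\lambda=0$.'' It does not: for any $\lambda$ the operator sending $w\mapsto\lambda h_3\mapsto 0$ is nilpotent of index two, so an Engel-type argument yields nothing. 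Contrast this with Lemma~\ref{L:VHV}, where $[v,h_3]$ is shown to be a multiple of $h_3$ \emph{itself}, so that $h_3$ is an eigenvector of a nilpotent operator and the eigenvalue must vanish; here $w$ is not an eigenvector, and killing the $h_3$-component is the entire content of the lemma rather than a bookkeeping afterthought.

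The paper closes this gap by a genuinely different device. If $\lambda\ne 0$, then $v\notin\Span(w)$ and $\g':=\Span(v,w,h_1,h_2,h_3)$ is a $5$-dimensional nilpotent subalgebra of $\g$ (one checks $[v,w]=\lambda h_3$, $[V,H]\subseteq H$, and $[h_1,h_2]\in\Span(w)+H$), on which $(H,\Span(v,w))$ is an \emph{open} respectful decomposition: $\Span(v,w)$ is not a subalgebra because $[v,w]=\lambda h_3\ne 0$, and $H$ is not because $\pi_V[h_1,h_2]=w\ne 0$. This contradicts Theorem~\ref{T:dim>=6}, which excludes open respectful decompositions of nilpotent algebras of dimension at most $5$. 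You would need to import this reduction (or some equivalent substitute) to complete your proof; as written, the last step fails.
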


\begin{proof} 
Let $w=\pi_V[h_1,h_2]$, so $V_H=\Span(w)$. We will show that $w\in Z(\bV)$.
Suppose that  $v\in V$.   
Using the notation introduced at the beginning of this section, we have $[v,h_1]=ah_2+bh_3$ and $[v,h_2]=ch_3$, for some $a,b,c\in \R$.
 As $V$ respects $H$, the Jacobi identity gives
\[
\pi_V[v,w]=\pi_V[v,[h_1,h_2]]=\pi_V[[v,h_1],h_2]+\pi_V[h_1,[v,h_2]]=\pi_V[bh_3,h_2]+\pi_V[h_1,ch_3]=0,
\]
since $h_3\in K_H$. Thus $[v,w]=\lambda h_3$, for some $\lambda\in \R$. Suppose $\lambda\not=0$.
Thus $v\not\in \Span(w)$. Let $\g':=\Span(v,w,h_1,h_2,h_3)$. Note that $\g'$ is a 5-dimensional subalgebra of $\g$. In particular, $\g'$ is  nilpotent.  Set $V'=\Span(v,w)$. Then $(H,V')$ is  an open respectful decomposition of $\g'$. But this is impossible by Theorem~\ref{T:dim>=6}. Hence $\lambda=0$ and so $[v,w]=0$. Since $v$ was arbitrary, we have $w\in Z(\bV)$, as required.
\end{proof}

%remark: above holds when $\g$ is nilpotent and $K_H$ has dim one. eg, when $\dim V=2$ and $V_H=V$.

If $\g$ is nilpotent with  $\dim \g=6$ and $\dim H=3$, then  $\bV$ is a 4-dimensional non-Abelian nilpotent subalgebra of $\g$. So  $\bV$ is isomorphic to either $\R\oplus \h_3$ or the 4-dimensional filiform algebra $\f_4$. In fact, the second possibility does not occur.

\begin{lemma}\label{L:VHV6}
Suppose that $\g$ is nilpotent and that $\dim \g=6$ and $\dim H=3$. Then  $\bV$ is isomorphic to $\R\oplus \h_3$. 
\end{lemma}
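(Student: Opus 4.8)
The plan is to combine the classification of $4$-dimensional non-Abelian nilpotent Lie algebras with a lower bound on $\dim Z(\bV)$. As observed just before the statement, $\bV$ is a $4$-dimensional non-Abelian nilpotent subalgebra of $\g$, hence isomorphic to either $\R\oplus\h_3$ or the filiform algebra $\f_4$. These two algebras are distinguished by the dimension of their centres — $2$ for $\R\oplus\h_3$ and $1$ for $\f_4$ — so it suffices to produce two linearly independent elements of $Z(\bV)$.

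First I would exhibit a central element coming from $H$: by Lemma~\ref{L:VHV}, $h_3$ lies in $H_V$, and $H_V$ is contained in $Z(\bV)$. Next, a central element coming from $V$: since $\g$ is nilpotent, $\V$ is nilpotent by Lemma~\ref{L:nil}, so Lemma~\ref{L:cen} applies and yields $V_H\subseteq Z(\bV)$; in particular $w:=\pi_V[h_1,h_2]$ lies in $Z(\bV)$. The element $w$ is nonzero: this is exactly the property of the basis $\{h_1,h_2,h_3\}$ recorded at the start of this section, namely $[h_1,h_2]\notin H$, which expresses that $H$ is not a subalgebra. Finally, since $h_3\in H$, $w\in V$ and $H\cap V=0$, the vectors $h_3$ and $w$ are linearly independent, so $\dim Z(\bV)\ge 2$; this rules out $\f_4$ and forces $\bV\cong\R\oplus\h_3$.

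There is no serious obstacle here — the two preparatory lemmas have already done the work — and the only point requiring a moment's care is that the two central vectors we produce are genuinely independent, one lying in $H$ and the other in $V$, which is immediate from the direct sum decomposition $\g=H\oplus V$. If one preferred to avoid invoking the list of $4$-dimensional nilpotent Lie algebras explicitly, the same computation shows $\bV$ is $2$-step nilpotent (its derived algebra is $\Span(h_3,w)\subseteq Z(\bV)$), which again excludes the $3$-step algebra $\f_4$.
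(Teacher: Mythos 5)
Your argument is correct and is essentially identical to the paper's proof: both distinguish $\R\oplus\h_3$ from $\f_4$ by the dimension of the centre and exhibit the two independent central elements $h_3\in H_V\subseteq Z(\bV)$ (Lemma~\ref{L:VHV}) and $\pi_V[h_1,h_2]\in V_H\subseteq Z(\bV)$ (Lemma~\ref{L:cen}). Your added remarks on the nonvanishing and independence of these two vectors are fine, though the parenthetical two-step-nilpotency alternative would need an extra line to justify $[\V,\V]\subseteq V_H$.
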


\begin{proof} 
Note that the center of $\R\oplus \h_3$ has dimension two, while
the center of $\f_4$ has dimension one.
Using the notation introduced at the beginning of this section, we have a basis $\{h_1,h_2,h_3\}$ for $H$ for which $\Span(h_3)=H_V$ and $\Span(\pi_V[h_1,h_2])=V_H$. 
By Lemma~\ref{L:VHV}, $h_3\in Z(\bV)$, and by Lemma~\ref{L:cen}, $\pi_V[h_1,h_2]\in Z(\bV)$. So $\dim Z(\bV)>1$ and hence $\bV\cong\R\oplus \h_3$.
\end{proof}

Suppose that $\dim\g=6$ and $\dim H=3$. Using the notation introduced at the beginning of this section,  we have a basis $\{h_1,h_2,h_3\}$ for $H$ for which 
\begin{equation}\label{E:tria2}
[h_2,h_3] =0,\qquad \pi_V[h_1,h_2]\not=0,\qquad [z,h_1]\in \Span(h_2,h_3),\qquad [z,h_2]\in \Span(h_3),\qquad [z,h_3] =0,
\end{equation}
for all $z\in \bV$.
Choose a  basis  $\{v_1,v_2,v_3\}$ for $\V$ such that 
$\pi_V[h_1,h_2]=v_3$. Since 
$V_H\subseteq Z(\bV)$ by Lemma~\ref{L:cen}, we have $[v_1,v_3]=[v_2,v_3]=0$. So, as $V$ is not a subalgebra, $\pi_H[v_1,v_2]\not=0$. So, by rescaling the elements $v_1,v_2$ if necessary, we may assume that $\pi_H[v_1,v_2]=h_3$. Notice that $\V$ is either Abelian or isomorphic to $\h_3$.
So, as 
$V_H\subseteq Z(\bV)$,  by rescaling the elements $v_1,h_3$ if necessary we may assume $[v_1,v_2]=h_3+\delta v_3$, where  $\delta=1$ when $\V$ is isomorphic to $\h_3$, and $\delta=0$ when $\V$ is Abelian.
By the previous lemma, $\bV\cong \R\oplus \h_3$, so its center has dimension two. From Lemma~\ref{L:cen},  $v_3\in Z(\bV)$ and by Lemma~\ref{L:VHV}, $h_3\in Z(\bV)$.
So $Z(\bV)=\Span(h_3,v_3)$.
Hence, in addition to \eqref{E:tria2}, we have the relations
\begin{equation}\label{E:vs}
[v_1,v_2]=h_3+\delta v_3, \qquad Z(\bV)=\Span(h_3,v_3), \qquad\pi_V[h_1,h_2]=v_3.
\end{equation}

The algebra $\V$ is either (A) Abelian or (B) isomorphic to $\h_3$. The algebra $\bH$ is four dimensional, nilpotent and non-Abelian. So either (I) $\bH\cong \f_4$ or (II) $\bH\cong\R\oplus \h_3$. So we have four cases to consider. 

\begin{lemma}\label{L:ceng}
Suppose that $\g$ is nilpotent and that $\dim \g=6$ and $\dim H=3$. Then  $Z(\g)\subseteq \Span(h_3,v_3)$.
\end{lemma}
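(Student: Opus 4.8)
The plan is to use the structural setup just established for the case $\dim\g=6$, $\dim H=3$: we have the basis $\{h_1,h_2,h_3\}$ for $H$ with relations \eqref{E:tria2} and a basis $\{v_1,v_2,v_3\}$ for $\V$ with relations \eqref{E:vs}, and in particular $\bV=V+H_V=\Span(v_1,v_2,v_3,h_3)$ is a subalgebra with $Z(\bV)=\Span(h_3,v_3)$. Since $\g$ is nilpotent, $Z(\g)\ne 0$. First I would show $Z(\g)\subseteq\bV$. Indeed, any $x\in Z(\g)$ commutes with everything, in particular $\pi_V[x,h_1]=\pi_V[x,h_2]=0$; combined with $h_3\in K_H$ (so $[h_3,h]\in H$), the only elements of $\g$ whose bracket with all of $h_1,h_2,h_3$ lands in $H$ that we need to control are... here one uses that $V$ respects $H$ to write $x=h+v$ with $h\in H$, $v\in V$, and then $[v,H]\subseteq H$ forces $[h,H]\subseteq H$, i.e. $h\in K_H=H_V=\Span(h_3)$, so $x\in V+\Span(h_3)=\bV$.

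Next, given $x\in Z(\g)\subseteq\bV$, I would argue $x\in Z(\bV)$ trivially since $\bV\subseteq\g$, hence $x\in Z(\bV)=\Span(h_3,v_3)$ by \eqref{E:vs}. That already gives the claim. So the real content is the first step, $Z(\g)\subseteq\bV$, and within that the key move is: an element $x$ that is central in $\g$ has $\pi_V$-component lying in $\bV$ automatically, and its $\pi_H$-component $h$ must satisfy $[h,H]\subseteq H$ because $x$ commutes with $H$ and $[v,H]\subseteq H$, whence $h\in K_H=H_V=\Span(h_3)$.

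The main obstacle is making the decomposition argument clean: one must be careful that $[x,h']=0$ decomposes as $[h,h']+[v,h']=0$ with $[v,h']\in H$ (since $V$ respects $H$), so $\pi_V[h,h']=-\pi_V[v,h']=0$ — wait, that only gives $\pi_V[h,h']=0$ directly, which is exactly $h\in K_H$, so actually this is immediate and there is no obstacle; the proof is short. I would therefore present it as: let $x\in Z(\g)$, write $x=h+v$ with $h\in H$, $v\in V$; for any $h'\in H$, $0=\pi_V[x,h']=\pi_V[h,h']+\pi_V[v,h']=\pi_V[h,h']$ since $[v,h']\in H$, so $h\in K_H=H_V=\Span(h_3)$; thus $x\in\Span(h_3)+V\subseteq\bV$, and since $\bV$ is a subalgebra, $x\in Z(\bV)=\Span(h_3,v_3)$ by \eqref{E:vs}, proving $Z(\g)\subseteq\Span(h_3,v_3)$.
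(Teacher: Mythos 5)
Your proof is correct, and it takes a somewhat more structural route than the paper's. The paper argues by writing a central element $z=ah_1+bh_2+ch_3+dv_1+ev_2+fv_3$ in coordinates and killing the coefficients one at a time: $\pi_V[h_1,z]=0$ and $\pi_V[h_2,z]=0$ force $b=a=0$ (since $\pi_V[h_1,h_2]=v_3\ne0$ and all other brackets with $h_1,h_2$ have zero $V$-component), and then $[v_1,z]=0$ and $[v_2,z]=0$ force $e=d=0$ via $[v_1,v_2]=h_3+\delta v_3$. Your first step is the same computation in disguise --- showing $\pi_V[\pi_H x,h']=0$ for all $h'\in H$ is exactly what kills $a$ and $b$ --- but you package it as ``$\pi_H x\in K_H=H_V=\Span(h_3)$'' via Lemma~\ref{L:JI}\eqref{L:KH}, which is cleaner. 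Your second step replaces the explicit brackets $[v_1,z]$, $[v_2,z]$ by the observation that $Z(\g)\cap\bV\subseteq Z(\bV)$ together with the already-established fact $Z(\bV)=\Span(h_3,v_3)$ from \eqref{E:vs}. This is legitimate since \eqref{E:vs} is in place before the lemma, though note it makes your proof depend on Lemma~\ref{L:VHV6} (which pins down $\bV\cong\R\oplus\h_3$ and hence $\dim Z(\bV)=2$), whereas the paper's direct computation only needs $[v_1,v_2]=h_3+\delta v_3$ and $h_3,v_3\in Z(\bV)$. Either way the argument is complete; your version is shorter and makes the role of $K_H$ and $Z(\bV)$ more transparent.
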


\begin{proof}
Let $z:=ah_1+bh_2+ch_3+dv_1+ev_2+fv_3\in Z(\g)$. 
Then as $V$ respects $H$ and $[h_i,h_3]\in H$ for $i=1,2$, we have that $\pi_V[h_1,z]=0$ gives $b=0$ and $\pi_V[h_2,z]=0$ gives $a=0$.
So $0=[v_1,z]=[v_1,ev_2+fv_3]=e(h_3+\delta v_3)$, and so $e=0$. Similarly $0=[v_1,z]$ gives $d=0$. The required result follows.
\end{proof}

\begin{lemma}\label{L:Vab}
Suppose that $\g$ is nilpotent and that $\dim \g=6$ and $\dim H=3$. If $\V$ is Abelian, then  $h_3\in Z(\g)$.
\end{lemma}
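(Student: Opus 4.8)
The plan is to reduce the statement to a single bracket and then eliminate it by nilpotency. Using the bases $\{h_1,h_2,h_3\}$ of $H$ and $\{v_1,v_2,v_3\}$ of $V$ fixed above, we have $\g=\Span(h_1,h_2,h_3,v_1,v_2,v_3)$. Relations \eqref{E:tria2} give $[z,h_3]=0$ for every $z\in\bV$, hence $[v_1,h_3]=[v_2,h_3]=[v_3,h_3]=0$, and \eqref{E:tria2} also gives $[h_2,h_3]=0$; trivially $[h_3,h_3]=0$. Thus $h_3\in Z(\g)$ is equivalent to the single identity $[h_1,h_3]=0$, and this is all that remains to prove.

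To establish $[h_1,h_3]=0$, I would use the hypothesis that $\V$ is Abelian: by \eqref{E:vs} this forces $\delta=0$, so $[v_1,v_2]=h_3$. The Jacobi identity applied to $v_1,v_2,h_1$ then gives
\[
[h_3,h_1]=\bigl[[v_1,v_2],h_1\bigr]=\bigl[v_1,[v_2,h_1]\bigr]-\bigl[v_2,[v_1,h_1]\bigr].
\]
Now I would substitute the triangularity relations \eqref{E:tria2} with $z=v_1$ and $z=v_2$: since $[v_j,h_1]\in\Span(h_2,h_3)$, $[v_i,h_2]\in\Span(h_3)$ and $[v_i,h_3]=0$, each iterated bracket $[v_i,[v_j,h_1]]$ lies in $\Span(h_3)$. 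Hence $[h_3,h_1]=\gamma h_3$ for some $\gamma\in\R$.

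Finally, I would kill $\gamma$ using nilpotency of $\g$ through the operator $\ad(h_1)$: from $[h_1,h_3]=-\gamma h_3$ one gets $\ad(h_1)^k(h_3)=(-\gamma)^k h_3$ for all $k\ge 1$, and since $\ad(h_1)$ is nilpotent this forces $\gamma=0$. Therefore $[h_1,h_3]=0$, and combined with the brackets recorded in the first paragraph, $h_3$ commutes with a basis of $\g$, so $h_3\in Z(\g)$.

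I do not expect a genuine obstacle. The points requiring care are that the hypothesis "$\V$ Abelian" is used exactly --- it is what makes $[v_1,v_2]$ equal to $h_3$ with no extra $v_3$ term, and hence what makes $\bigl[[v_1,v_2],h_1\bigr]$ literally equal $[h_3,h_1]$ --- and that the double brackets collapse all the way into $\Span(h_3)$ rather than merely into $\Span(h_2,h_3)$; both are immediate from \eqref{E:tria2}.
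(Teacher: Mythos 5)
Your proof is correct and follows essentially the same route as the paper's: reduce to showing $[h_1,h_3]=0$, use $[v_1,v_2]=h_3$ (from $\delta=0$ when $\V$ is Abelian) together with the Jacobi identity and the triangularity relations \eqref{E:tria2} to place $[h_1,h_3]$ in $\Span(h_3)$, and then invoke nilpotency to kill it. The only cosmetic difference is that the paper applies nilpotency of $\bH$ rather than of $\g$ at the last step, which is the same argument.
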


\begin{proof}
Suppose that $\V$ is Abelian.
Applying the Jacobi identity gives
\begin{equation}\label{E:h3}
[h_1,h_3]=[h_1,[v_1,v_2]]=[[h_1,v_1],v_2]+[v_1,[h_1,v_2]].
\end{equation}
From \eqref{E:tria2} we have $[[h_1,v_1],v_2],[v_1,[h_1,v_2]]\in \Span(h_3)$, so \eqref{E:h3}
gives $[h_1,h_3]\in \Span(h_3)$. Thus, as $\bH$ is nilpotent, $[h_1,h_3]=0$. So, since $h_3\in Z(\bV)$ by Lemma~\ref{L:VHV},  and $[h_2,h_3] =0$ by \eqref{E:tria2}, we have  $h_3\in Z(\g)$.
\end{proof}

\begin{lemma}\label{L:HRh}
Suppose that $\g$ is nilpotent and that $\dim \g=6$ and $\dim H=3$. Then the following conditions are equivalent:
\begin{enumerate}
\item  $\bH\cong \R\oplus \h_3$,
\item $\dim Z(\g)=2$,
\item $Z(\g)=\Span(h_3,v_3)$.
\end{enumerate}
\end{lemma}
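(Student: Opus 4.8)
The plan is to prove a cycle of implications, say $(a)\Rightarrow(c)\Rightarrow(b)\Rightarrow(a)$, exploiting the setup established in equations~\eqref{E:tria2} and \eqref{E:vs}, together with Lemmas~\ref{L:ceng}, \ref{L:Vab} and \ref{L:VHV6}. First note that by Lemma~\ref{L:ceng} we always have $Z(\g)\subseteq\Span(h_3,v_3)$, so $(b)$ and $(c)$ are equivalent as soon as we know $Z(\g)$ has dimension $2$ forces $Z(\g)=\Span(h_3,v_3)$ — which is immediate since $\Span(h_3,v_3)$ is $2$-dimensional. Thus the only real content is the equivalence of $(a)$ with either of the others; I would prove $(a)\Rightarrow(c)$ and $(c)\Rightarrow(a)$.

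For $(c)\Rightarrow(a)$: if $Z(\g)=\Span(h_3,v_3)$, then in particular $h_3$ and $v_3$ are both central in $\g$, hence central in $\bH=H+V_H=H+\Span(v_3)$; since $h_3,v_3$ are linearly independent, $\dim Z(\bH)\ge 2$. As $\bH$ is a $4$-dimensional non-Abelian nilpotent algebra, it is $\cong\f_4$ or $\cong\R\oplus\h_3$, and only the latter has a $2$-dimensional center (the center of $\f_4$ is $1$-dimensional, namely $\Span(x_4)$). Hence $\bH\cong\R\oplus\h_3$.

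For $(a)\Rightarrow(c)$: assume $\bH\cong\R\oplus\h_3$, so $\dim Z(\bH)=2$. I would argue that $Z(\bH)=\Span(h_3,v_3)$. Indeed $h_3\in Z(\bH)$ by Lemma~\ref{L:VHV} (it is central even in $\bV$, and $[h_1,h_3]=[h_2,h_3]=0$ in $H$; combined with $[v_3,h_3]=0$ from \eqref{E:vs}), and $v_3=\pi_V[h_1,h_2]\in V_H$, which is central in $\bV$ by Lemma~\ref{L:cen}, while the relations \eqref{E:tria2} give $[h_i,v_3]\in H\cap[\g,\g]$ controlled by triangularity — here I need to check $[h_1,v_3]$ and $[h_2,v_3]$ land in $\Span(h_3)$ and then vanish by nilpotency, so $v_3\in Z(\bH)$. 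Since $\dim Z(\bH)=2$ and $h_3,v_3$ are independent central elements, $Z(\bH)=\Span(h_3,v_3)$. It then remains to promote centrality in $\bH$ to centrality in $\g$: one must show $[v_i,h_3]=[v_i,v_3]=0$ for $i=1,2$. The brackets $[v_i,v_3]=0$ come directly from \eqref{E:vs}; and $[v_i,h_3]=0$ because $[v_i,h_3]\in K_H=\Span(h_3)$ (as in the proof of Lemma~\ref{L:VHV}) and $\g$ is nilpotent. Hence $h_3,v_3\in Z(\g)$, so $\Span(h_3,v_3)\subseteq Z(\g)$, and combined with Lemma~\ref{L:ceng} we get equality, i.e.\ $(c)$.

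The main obstacle I anticipate is the step showing $v_3\in Z(\bH)$ in the direction $(a)\Rightarrow(c)$: one must carefully compute $[h_1,v_3]$ and $[h_2,v_3]$ using the Jacobi identity with $v_3=\pi_V[h_1,h_2]$, and use the triangular relations \eqref{E:tria2} together with $h_3\in K_H$ to force these into $\Span(h_3)$ and then to zero by nilpotency of $\bH$. Everything else is bookkeeping with the established normal forms. An alternative, possibly cleaner, route for $(a)\Rightarrow(c)$ is to skip $Z(\bH)$ entirely and directly verify $[x,h_3]=[x,v_3]=0$ for $x$ running over a basis $\{h_1,h_2,h_3,v_1,v_2,v_3\}$ of $\g$, using \eqref{E:tria2}, \eqref{E:vs}, Lemmas~\ref{L:VHV}, \ref{L:cen}, and the hypothesis $\bH\cong\R\oplus\h_3$ only to rule out the $\f_4$ case where, as a sanity check, $v_3$ would fail to be central; I would present whichever comes out shorter.
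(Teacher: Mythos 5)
Your overall architecture is sound and largely matches the paper's: the equivalence of (b) and (c) via Lemma~\ref{L:ceng} is exactly the paper's argument, and your $(c)\Rightarrow(a)$ (two independent central elements of $\g$ lying in $\bH$ force $\dim Z(\bH)\ge 2$, ruling out $\f_4$) is the contrapositive of the paper's $(b)\Rightarrow(a)$. The promotion from centrality in $\bH$ to centrality in $\g$ at the end of your $(a)\Rightarrow(c)$ is also fine: $[v_i,v_3]=0$ comes from $Z(\bV)=\Span(h_3,v_3)$ and $[v_i,h_3]=0$ from Lemma~\ref{L:VHV}.

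The genuine gap is in the step you yourself flag as the main obstacle, namely showing $v_3\in Z(\bH)$ (and, implicitly, $[h_1,h_3]=0$, which you assert without justification: \eqref{E:tria2} only gives $[h_3,h_1]\in\Span(h_2,h_3)$, not $0$). Your proposed mechanism --- force $[h_1,v_3]$ and $[h_2,v_3]$ into $\Span(h_3)$ by triangularity and then kill them ``by nilpotency'' --- does not work. Nilpotency of $\ad$ only kills a bracket $[x,y]$ that is a scalar multiple of $y$ itself (as in Lemma~\ref{L:VHV}, where $[v,h_3]\in\Span(h_3)$); it says nothing about $[h_1,v_3]\in\Span(h_2,h_3)$. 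Worse, your mechanism makes no use of the hypothesis $\bH\cong\R\oplus\h_3$, and the conclusion is genuinely false without it: when $\bH\cong\f_4$ the derived algebra $[\bH,\bH]$ is $2$-dimensional and $v_3$ is \emph{not} central in $\bH$. The correct argument, which is the paper's, is structural: under (a) the derived algebra of $\bH$ is $1$-dimensional, so $[\bH,\bH]=\Span([h_1,h_2])$, and since $\pi_V[h_1,h_2]=v_3\ne 0$ we get $H\cap[\bH,\bH]=0$; each of the brackets $[v_3,h_i]$ and $[h_3,h_i]$ lies in $H$ (by \eqref{E:tria2} and $h_3\in K_H$) and in $[\bH,\bH]$, hence vanishes, and then \eqref{E:vs} gives $Z(\g)=\Span(h_3,v_3)$ directly, without needing to identify $Z(\bH)$ first. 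Your ``alternative route'' gestures at using the hypothesis to rule out the $\f_4$ case but supplies no argument, so as written the proposal does not close this step.
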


\begin{proof} (c) $\implies$ (b) is obvious and Lemma~\ref{L:ceng} gives (b) $\implies$ (c).

(a) $\implies$ (c).
The derived algebra of $\bH$ has dimension one, so $[\bH,\bH]=\Span([h_1,h_2])$. In particular,
$H\cap[\bH,\bH]=0$. Hence $[v_3,h_i]=0$ for all $i=1,2,3$, and $[h_3,h_i]=0$ for all $i=1,2$. It follows from \eqref{E:vs} that $Z(\g)=\Span(h_3,v_3)$.

(b) $\implies$ (a). We prove the contrapositive. Suppose $\bH\cong \f_4$. So $\dim Z(\bH)=1$.  Note that by Lemma~\ref{L:ceng},
$Z(\g)\subseteq \Span(h_3,v_3)\subseteq \bH$. Hence $ Z(\g)\subseteq Z(\bH)$ and so  $\dim Z(\g)=1$.
\end{proof}

\begin{lemma}\label{L:centers}
Suppose that $\g$ is nilpotent and that $\dim \g=6$ and $\dim H=3$. Then $Z(\bH)= Z(\g)$.
\end{lemma}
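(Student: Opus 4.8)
The plan is to show the two inclusions $Z(\bH)\subseteq Z(\g)$ and $Z(\g)\subseteq Z(\bH)$. The second one is immediate: by Lemma~\ref{L:ceng} we have $Z(\g)\subseteq\Span(h_3,v_3)\subseteq\bH$, and any element of $\g$ that commutes with all of $\g$ certainly commutes with all of the subalgebra $\bH$, so $Z(\g)\subseteq Z(\bH)$. So the real content is the inclusion $Z(\bH)\subseteq Z(\g)$: an element $z$ of $\bH=H+V_H=\Span(h_1,h_2,h_3,v_3)$ that commutes with $h_1,h_2,h_3,v_3$ must also commute with $v_1$ and $v_2$, and hence with all of $\g=\bH+\Span(v_1,v_2)$.

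First I would take $z\in Z(\bH)$ and write $z=ah_1+bh_2+ch_3+fv_3$. Using that $\bH$ is nilpotent and non-Abelian with a one-dimensional derived algebra (since $\bH\cong\f_4$ or $\R\oplus\h_3$), together with the structure relations in \eqref{E:tria2}, I would pin down which of $a,b,c,f$ can be nonzero. Concretely, $[h_1,z]=0$ and $[h_2,z]=0$ combined with the lower-triangular form \eqref{E:tria2} of the $\bH$-action on $H$ (recall $h_3\in H_V=K_H$, so $\pi_V[h_i,h_3]=0$, and $[h_2,h_3]=0$) should force $a=b=0$, so that $z\in\Span(h_3,v_3)$; this is exactly the content already extracted in the proof of Lemma~\ref{L:ceng}. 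Thus it remains to check that every element of $\Span(h_3,v_3)$ which lies in $Z(\bH)$ in fact lies in $Z(\g)$, i.e.\ commutes with $v_1$ and $v_2$.

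The key step is then to compute $[v_1,h_3]$, $[v_2,h_3]$, $[v_1,v_3]$ and $[v_2,v_3]$. By \eqref{E:vs} we already have $v_3\in Z(\bV)$, so $[v_1,v_3]=[v_2,v_3]=0$. For $h_3$: by Lemma~\ref{L:VHV} we have $h_3\in Z(\bV)$, so $[v_1,h_3]=[v_2,h_3]=0$ as well. Hence \emph{every} element of $\Span(h_3,v_3)$ automatically commutes with $v_1$ and $v_2$, and with $h_1,h_2,h_3,v_3$ by hypothesis that $z\in Z(\bH)$; since $\g$ is spanned by $h_1,h_2,h_3,v_1,v_2,v_3$ (note $v_3\in V_H\subseteq V$ is a combination of $h_3$ and a genuine $V$-vector, but in any case $\{h_1,h_2,h_3,v_1,v_2,v_3\}$ spans $\g$), we conclude $z\in Z(\g)$. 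Combined with $Z(\g)\subseteq Z(\bH)$, this gives $Z(\bH)=Z(\g)$.

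I expect the only subtlety, rather than a genuine obstacle, to be bookkeeping: making sure that the basis $\{h_1,h_2,h_3,v_1,v_2,v_3\}$ really does span $\g$ (which follows since $\{h_1,h_2,h_3\}$ is a basis of $H$ and $\{v_1,v_2,v_3\}$ is a basis of the Lie algebra $\V$ built on $V$, and $H\oplus V=\g$), and that in identifying $Z(\bH)$ I correctly use the one-dimensionality of $[\bH,\bH]$ together with \eqref{E:tria2} rather than re-deriving it. Everything else is a direct application of Lemmas~\ref{L:VHV}, \ref{L:cen}, and~\ref{L:ceng} and the relations \eqref{E:tria2}, \eqref{E:vs}.
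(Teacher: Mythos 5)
Your proof is correct, but it takes a genuinely different route from the paper. The paper splits into the two cases $\dim Z(\g)=1$ and $\dim Z(\g)=2$; via Lemma~\ref{L:HRh} these correspond to $\bH\cong\f_4$ and $\bH\cong\R\oplus\h_3$, and in the $\f_4$ case the key device is that $Z(\bH)$ is a characteristic ideal of the ideal $\bH$, hence an ideal of $\g$, hence annihilated by every $\ad(z)$ by nilpotency, which forces the one-dimensional $Z(\bH)$ to coincide with the one-dimensional $Z(\g)$. You instead argue uniformly: the inclusion $Z(\g)\subseteq Z(\bH)$ is immediate from Lemma~\ref{L:ceng}, and for the converse you take $z=ah_1+bh_2+ch_3+fv_3\in Z(\bH)$ and project $[h_1,z]$ and $[h_2,z]$ onto $V$ (using $\pi_V[h_1,h_2]=v_3\ne 0$, $h_3\in K_H$ and $[v_3,h_i]\in H$) to get $a=b=0$, so $z\in\Span(h_3,v_3)\subseteq Z(\bV)$ by Lemmas~\ref{L:VHV} and~\ref{L:cen}, whence $z$ commutes with $v_1,v_2$ as well and lies in $Z(\g)$. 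This buys you a case-free, purely computational argument that recycles the calculation from Lemma~\ref{L:ceng}, at the cost of redoing a small bracket computation that the paper's structural argument avoids. One slip to fix: you twice assert that $[\bH,\bH]$ is one-dimensional ``since $\bH\cong\f_4$ or $\R\oplus\h_3$'', but $\f_4$ has a two-dimensional derived algebra; fortunately this claim is parenthetical and plays no role in your actual computation, which relies only on \eqref{E:tria2}, \eqref{E:vs} and $h_3\in K_H$. The stray remark that $v_3$ ``is a combination of $h_3$ and a genuine $V$-vector'' is also off ($v_3=\pi_V[h_1,h_2]$ lies in $V$ outright), but again harmless since $\{h_1,h_2,h_3,v_1,v_2,v_3\}$ does span $\g$.
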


\begin{proof} From Lemma~\ref{L:ceng}, $\dim Z(\g)\le 2$.
If $\dim Z(\g)=1$, then $\bH\cong \f_4$ by Lemma~\ref{L:HRh}. Thus $\bH$ has a one dimensional center, which is a characteristic ideal. So, as $\bH$ is an ideal of $\g$, for all $z\in \g$ we have $[z,Z(\bH)]\subseteq Z(\bH)$ and so, as $\g$ is nilpotent, $[z,Z(\bH)]=0$. Hence as $\dim Z(\g)=1$, we have $Z(\bH) = Z(\g)$. 

If $\dim Z(\g)=2$, then  by Lemma~\ref{L:HRh}, $\bH\cong \R\oplus \h_3$ and so $Z(\bH)$ has dimension two. By  Lemma~\ref{L:ceng}, $Z(\g)=\Span(h_3,v_3)\subseteq \bH$, so $Z(\g)\subseteq Z(\bH)$. Thus, for dimension reasons, $Z(\bH)= Z(\g)$.
\end{proof}

\begin{lemma}\label{L:der}
Suppose that $\g$ is nilpotent and that $\dim \g=6$ and $\dim H=3$. Then $\Span(h_2,h_3,v_3)$ is an ideal of $\g$ with Abelian quotient. In particular, the derived algebra $[\g,\g]$ is contained in $\Span(h_2,h_3,v_3)$ and hence has dimension at most three. 
\end{lemma}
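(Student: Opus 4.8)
The plan is to prove the sharper statement that $[\g,\g]\subseteq\m$, where $\m:=\Span(h_2,h_3,v_3)$; everything in the lemma then follows formally. Indeed $\m$ has dimension $3$ (the vectors $h_2,h_3$ are independent in $H$, and $v_3\in V\setminus\{0\}$ with $H\cap V=0$), so once $[\g,\g]\subseteq\m$ is known we get $[\g,\m]\subseteq[\g,\g]\subseteq\m$, i.e.\ $\m$ is an ideal; the quotient $\g/\m$ satisfies $[\g/\m,\g/\m]=0$ and is Abelian; and $\dim[\g,\g]\le 3$. Since the bracket is bilinear, to establish $[\g,\g]\subseteq\m$ it suffices to check that $[e_i,e_j]\in\m$ for every pair from the basis $h_1,h_2,h_3,v_1,v_2,v_3$ of $\g$.

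Most of these brackets are immediate from the relations already assembled. For any $z\in\bV=\Span(h_3,v_1,v_2,v_3)$, formula \eqref{E:tria2} gives $[z,h_1]\in\Span(h_2,h_3)\subseteq\m$, $[z,h_2]\in\Span(h_3)\subseteq\m$, and $[z,h_3]=0$; taking $z$ to be each of $h_3,v_1,v_2,v_3$ in turn disposes of every bracket involving $h_3$, $v_1$, $v_2$ or $v_3$ against one of $h_1,h_2,h_3$. Among $v_1,v_2,v_3$ themselves, $[v_1,v_2]=h_3+\delta v_3\in\m$ by \eqref{E:vs}, while $[v_1,v_3]=[v_2,v_3]=0$ because $v_3\in V_H\subseteq Z(\bV)$ by Lemma~\ref{L:cen}. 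Hence the only bracket of basis vectors not visibly contained in $\m$ is $[h_1,h_2]$, and this is the single point that requires a genuine argument.

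By the choice of basis of $\V$ we have $\pi_V[h_1,h_2]=v_3$, so write $[h_1,h_2]=v_3+ah_1+bh_2+ch_3$ for some $a,b,c\in\R$; then $[h_1,h_2]\in\m$ is equivalent to $a=0$. I would obtain this from nilpotency, exactly as in the dimension-$5$ trace computations above: since $\g$ is nilpotent, $\ad(h_2)$ is a nilpotent operator, so $\tr(\ad(h_2))=0$. Computing the diagonal of $\ad(h_2)$ in the basis $h_1,h_2,h_3,v_1,v_2,v_3$: by \eqref{E:tria2} the vectors $[h_2,v_1],[h_2,v_2],[h_2,v_3]$ all lie in $\Span(h_3)$ and $[h_2,h_3]=0$, so the only possibly nonzero diagonal entry is the $h_1$-coefficient of $[h_2,h_1]=-[h_1,h_2]$, namely $-a$. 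Thus $0=\tr(\ad(h_2))=-a$, so $a=0$, giving $[h_1,h_2]=v_3+bh_2+ch_3\in\m$ and hence $[\g,\g]\subseteq\m$.

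In short, the proof is a short case check against \eqref{E:tria2} and \eqref{E:vs}; the only nontrivial ingredient is the vanishing of the $h_1$-component of $[h_1,h_2]$, which is forced by $\tr(\ad(h_2))=0$, and I do not anticipate any real obstacle.
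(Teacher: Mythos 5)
Your proof is correct, and it takes a genuinely different route from the paper's. You reduce everything to the single bracket $[h_1,h_2]$ via \eqref{E:tria2} and \eqref{E:vs} (this reduction is also implicit in the paper) and then kill the $h_1$-component of $[h_1,h_2]$ by observing that $\ad(h_2)$ is a nilpotent operator on $\g$, hence traceless, and that $-a$ is its only possibly nonzero diagonal entry in the basis $h_1,h_2,h_3,v_1,v_2,v_3$. The paper instead argues structurally: it splits into the cases $\bH\cong\R\oplus\h_3$ (where $\bH$ is two-step nilpotent, so $[\bH,\bH]\subseteq Z(\bH)=\Span(h_3,v_3)$ by Lemmas~\ref{L:HRh} and \ref{L:centers}) and $\bH\cong\f_4$ (where it further subdivides according to whether $\m$ is Abelian or isomorphic to $\h_3$, using the unique $3$-dimensional Abelian subalgebra of $\f_4$ in the first subcase and the relation $[h_2,[h_1,h_2]]\in Z(\bH)=\Span(h_3)$ in the second to force $a=0$). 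Your trace argument is shorter and self-contained: it needs only the nilpotency of $\g$ and none of the classification of $\bH$ or the center lemmas, and it is the same unimodularity trick the paper itself deploys in Theorems~\ref{T:derv} and \ref{T:dim>=6}. The paper's version is more structural and, in the $\R\oplus\h_3$ case, yields the slightly stronger conclusion $[H,H]\subseteq Z(\g)$, but that extra information is not needed for the lemma. Both proofs are valid.
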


\begin{proof} Let $\m=\Span(h_2,h_3,v_3)$. We will show that $[\g,\g]\subseteq\m$, which shows that $\m$ is an ideal and $\g/\m$ is Abelian, as required. Note that $[V,V]\subseteq\m$ and $[V,H]\subseteq\m$, by \eqref{E:tria2} and \eqref{E:vs}. So it suffices to show that $[H,H]\subseteq\m$.

If $\bH\cong \R\oplus \h_3$, then
 $\bH$ is two-step nilpotent and so $[\bH,\bH]\subseteq Z(\bH)$. So $[H,H]\subseteq[\bH,\bH]\subseteq\m$,  since $Z(\bH)=Z(\g)=\Span(h_3,v_3)$, by Lemmas~\ref{L:HRh} and \ref{L:centers}. So we may suppose that $\bH\cong \f_4$. From \eqref{E:tria2}, $\m$ is a Lie algebra in which the element $h_3$ is central with $[h_2,v_3]=\lambda h_3$ for some $\lambda\in\R$. If
 $\lambda=0$, then $\m$ is Abelian. But $\f_4$ has a unique 3-dimensional Abelian subalgebra, which contains the derived algebra $[\f_4,\f_4]$, and thus  $[\bH,\bH]\subseteq \m$.  
 So we may suppose $\lambda\not=0$. Thus $\m\cong \h_3$. By Lemma~\ref{L:ceng}, $Z(\g)\subseteq \Span(h_3,v_3)$. So as $[h_2,v_3]\not=0$ and $h_3$ is central in $\m$, we have $Z(\g)=\Span(h_3)$. Hence it remains to show that $[h_1,h_2]\in\m$.
 Let $[h_1,h_2]=v_3+a h_1+b h_2+c h_3$ for some $a,b,c\in \R$. 
As $\bH\cong \f_4$, which is three-step nilpotent, we have $[h_2,[h_1,h_2]]\in Z(\bH)=\Span(h_3)$. So 
$\pi_V[h_2, [h_1,h_2]]=0$. Hence, as $[h_2,v_3]=\lambda h_3\in H$ and $[h_2,h_3]=0$,
 \[
0=\pi_V[h_2, [h_1,h_2]]=\pi_V[h_2,v_3+a h_1+b h_2+c h_3]=-av_3,
\]
 so $a=0$. Thus $[h_1,h_2]\in\m$, as required.
 \end{proof}

\begin{remark} If the derived algebra $[\g,\g]$ of a nilpotent Lie algebra of dimension 6 has dimension $\le 3$, then $[\g,\g]$ is Abelian. This can be observed from the classification of 6-dimensional nilpotent Lie algebras \cite{deG}, or can be easily proved directly.
%If $[\g,\g]$ is a Heisenberg, its center is in the center of $\g$. So $\g/ Z([\g,\g])$ has a derived algebra of dimension two; so it is $L_{5,8}$; $[x_1,x_2]=x_4,\quad [x_1,x_3]=x_5$. Then consider Jacobi on $[[x_1,x_2],x_5$ in $\g$.
\end{remark}

\begin{lemma}\label{L:trick}
Suppose that $\g$ is nilpotent and that $\dim \g=6$ and $\dim H=3$. If $\dim Z(\g)=1$, then $[\g,\Span(h_2,h_3,v_3)]\not\subseteq Z(\g)$.
\end{lemma}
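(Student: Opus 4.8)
The plan is to argue by contradiction: assume $\dim Z(\g)=1$ and that $[\g,\m]\subseteq Z(\g)$, where $\m:=\Span(h_2,h_3,v_3)$. By Lemma~\ref{L:HRh}, $\dim Z(\g)=1$ forces $\bH\cong \f_4$, and by Lemma~\ref{L:ceng} together with Lemma~\ref{L:centers} we have $Z(\g)=Z(\bH)$, a one-dimensional space sitting inside $\Span(h_3,v_3)$. The first sub-step is to pin down $Z(\g)$ precisely. Since $\bH\cong\f_4$ is three-step nilpotent with $[\bH,\bH,\bH]$ equal to its center, and since $[\bH,\bH]\subseteq\m$ by (the proof of) Lemma~\ref{L:der}, the center $Z(\bH)$ is a specific line in $\Span(h_3,v_3)$; I expect that $h_3\in Z(\g)$ precisely when the $\h_3$-case of Lemma~\ref{L:der} does \emph{not} occur, and otherwise $v_3$ or a combination lies in the center. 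The cleanest route is to keep track of the bracket $[h_2,v_3]=\lambda h_3$ from \eqref{E:tria2}/\eqref{E:vs}: if $\lambda\neq 0$ then (as in Lemma~\ref{L:der}) $\m\cong\h_3$ and $Z(\g)=\Span(h_3)$; if $\lambda=0$ then $\m$ is abelian and one checks $Z(\g)=\Span(v_3)$ (using that $h_3$ must then fail to be central, as $\bH\cong\f_4$ has only a one-dimensional center which is $[\bH,\bH,\bH]\ni$ something with nonzero $v_3$-component). Handling these two cases in parallel is the bookkeeping cost.

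Next I would exploit the assumption $[\g,\m]\subseteq Z(\g)$ directly. This says $[v,h_2],[v,h_3],[v,v_3]\in Z(\g)$ for all $v\in V$ and $[h_i,h_2],[h_i,h_3]\in Z(\g)$ for $i=1,2,3$. Combined with \eqref{E:tria2}, which already tells us $[z,h_2]\in\Span(h_3)$, $[z,h_3]=0$ and $[z,h_1]\in\Span(h_2,h_3)$ for all $z\in\bV$, the assumption forces extra collapse. In particular, since $\bH\cong\f_4$ the derived flag $[\bH,\bH]=\Span([h_1,h_2])$ and $[\bH,[\bH,\bH]]=\Span([h_1,[h_1,h_2]])$ are one-dimensional and nested; but $[h_1,[h_1,h_2]]$ is, up to the $Z(\g)$ ambiguity, $[h_1,(\text{the }h_2\text{ or }v_3\text{ part of }[h_1,h_2])]$, and $[\g,\m]\subseteq Z(\g)$ squeezes $[h_1,h_2]$ itself to have its entire $\m$-part inside $Z(\g)$ after one more bracket with $h_1$. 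The point is to show this is incompatible with $\bH$ being genuinely three-step nilpotent: three-step nilpotency of $\f_4$ needs $\operatorname{ad}(h_1)$ to act with a Jordan block of size $3$ on $\bH$, which requires $[h_1,h_2]$ to have nonzero $h_2$-component modulo the center — and that component, when bracketed again with $h_1$, must land outside $Z(\g)$, contradicting $[\g,\m]\subseteq Z(\g)$.

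The main obstacle, and where I expect to spend the most care, is the interplay between the two unknown lines $Z(\g)$ and $[\bH,[\bH,\bH]]$ inside the $2$-plane $\Span(h_3,v_3)$, together with the off-$\m$ brackets $[v_1,h_i]$ and $[v_2,h_i]$ which I have not yet constrained. I would organize it as follows: (1) fix the basis as in \eqref{E:tria2}, \eqref{E:vs}; (2) write $[h_1,h_2]=v_3+\alpha h_1+\beta h_2+\gamma h_3$ and use Lemma~\ref{L:der} (its proof) to kill $\alpha$ in the $\f_4$ case, so $[h_1,h_2]=v_3+\beta h_2+\gamma h_3$; (3) compute $[h_1,[h_1,h_2]]=[h_1,v_3]+\beta[h_1,h_2]+\gamma[h_1,h_3]$ and note $[h_1,v_3],[h_1,h_3]\in Z(\g)$ by the contradiction hypothesis, while $\beta[h_1,h_2]$ contributes $\beta v_3+\beta^2 h_2+\dots$; (4) since $\bH\cong\f_4$ is three-step we need $[h_1,[h_1,h_2]]\neq 0$ but $[h_1,[h_1,[h_1,h_2]]]=0$, forcing relations on $\beta$; (5) derive that either $[h_1,[h_1,h_2]]\notin Z(\g)$ outright (contradiction, since it lies in $[\g,\m]$ once we know $[h_1,h_2]-v_3\in\m$ implies $[h_1,h_2]\in\m+Z(\g)$ hence $[h_1,[h_1,h_2]]\in[\g,\m]+[\g,Z(\g)]=[\g,\m]\subseteq Z(\g)$ — wait, that would make it central, so three-step nilpotency of $\f_4$ collapses), or $\bH$ is only two-step, contradicting $\bH\cong\f_4$. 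The cleanest finish is step~(5): $[h_1,h_2]\in\m\oplus Z(\g)$ by step~(2), so $[\bH,[\bH,\bH]]\subseteq[\g,\m]\subseteq Z(\g)$, whence $[\bH,[\bH,\bH]]$ is central in $\g$ — but then $\operatorname{ad}(h_1)^2$ maps $\bH$ into the center and $\bH$ cannot be three-step nilpotent, contradicting $\bH\cong\f_4$, which was forced by $\dim Z(\g)=1$.
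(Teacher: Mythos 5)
Your reduction to the case $\bH\cong\f_4$ (via Lemma~\ref{L:HRh}) and the idea of contradicting the structure of $\f_4$ are on the right track, but the contradiction you actually derive in step~(5) is based on a false claim. You conclude from $[\bH,[\bH,\bH]]\subseteq[\g,\m]\subseteq Z(\g)$ that ``$\ad(h_1)^2$ maps $\bH$ into the center and $\bH$ cannot be three-step nilpotent.'' This is not a contradiction: in $\f_4$ with relations $[x_1,x_2]=x_3$, $[x_1,x_3]=x_4$, the third term of the lower central series is $\Span(x_4)$, which \emph{is} the center and is nonzero, and $\ad(x_1)^2$ maps $\f_4$ exactly into that center. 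Three-step nilpotency requires $\ad^2\neq 0$ and $\ad^3=0$; having the image of $\ad(h_1)^2$ land in $Z(\bH)$ is precisely what happens in $\f_4$, so nothing collapses. The same error appears in your parenthetical remark that centrality of $[h_1,[h_1,h_2]]$ would make ``three-step nilpotency of $\f_4$ collapse.'' Your argument as written therefore does not close.

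The correct contradiction sits one step higher in the lower central series, and it is what the paper uses. Since $\m=\Span(h_2,h_3,v_3)$ has codimension one in $\bH=\Span(h_1)\oplus\m$, one has $[\bH,\bH]=[h_1,\m]+[\m,\m]\subseteq[\bH,\m]$, so $[\bH,\m]=[\bH,\bH]$, which is two-dimensional because $\bH\cong\f_4$. (The paper phrases this via $\m$ being the distinguished three-dimensional Abelian ideal of $\f_4$, for which $[\f_4,\m]=[\f_4,\f_4]$.) Hence $[\g,\m]\supseteq[\bH,\m]$ has dimension at least $2$ and cannot be contained in the one-dimensional $Z(\g)$ --- no contradiction hypothesis, basis bookkeeping, or case split on $\lambda$ and the location of $Z(\g)$ is needed. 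You had all the ingredients ($[\bH,\bH]\subseteq\m$ from Lemma~\ref{L:der}, the hypothesis $[\g,\m]\subseteq Z(\g)$); you only needed to apply the hypothesis to $[\bH,\bH]=[\bH,\m]$ itself rather than to $[\bH,[\bH,\bH]]$.
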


\begin{proof} If $\dim Z(\g)=1$, then $\bH\cong \f_4$ by Lemma~\ref{L:HRh}. As in the proof of Lemma~\ref{L:f4con}, $\Span(h_2,h_3,v_3)$ is the unique 3-dimensional Abelian ideal of $\bH$. Hence $[\bH,\Span(h_2,h_3,v_3)] =[\bH,\bH]$, which has dimension two  as $\bH\cong \f_4$. Thus $[\bH,\Span(h_2,h_3,v_3)]\not\subseteq Z(\g)$. The required result follows.
\end{proof}

\begin{proof}[Proof of Theorem \ref{T:main}]
Suppose that a 6-dimensional nilpotent Lie algebra $\g$ possesses an open respectful decomposition $(H,V)$ with $\dim H=3$. 
Parts (a), (b) are given by Lemmas~\ref{L:f4con} and \ref{L:ceng} respectively. Lemma~\ref{L:der}
gives $[\g,\g]\subseteq \Span(h_2,h_3,v_3)$ and Lemma~\ref{L:ceng} gives 
  $Z(\g)\subseteq \Span(h_3,v_3)$, so $[\g,\g]+Z(\g)\subseteq \Span(h_2,h_3,v_3)$, which has dimension $3$. This proves (c).
By Lemma~\ref{L:der}, $[\g,\g]\subseteq\Span(h_2,h_3,v_3)$, so if  $\dim [\g,\g]=3$, then $[\g,\g]=\Span(h_2,h_3,v_3)$.
If $\dim Z(\g)=1$, then Lemma~\ref{L:trick} gives  $[\g,\Span(h_2,h_3,v_3)]\not\subseteq Z(\g)$. This establishes (d).

 From the classification of real nilpotent Lie algebras of dimension 6, we see that up to isomorphism, there are exactly 14 nilpotent Lie algebras of dimension 6 satisfying the above four conditions; see Table~\ref{TableV3}, which uses the classification and notation of de Graaf \cite{deG}.  For each of these  14  algebras,  Table~\ref{TableV3} gives an open respectful decomposition $(H,V)$ with $\dim H=3$. (Note that there may be more than one such decomposition for each algebra; see Example~\ref{Ex:h3x2}). To see that the other algebras in de Graaf's classification do not possess an open respectful decomposition, note that:
\begin{itemize}
\item Algebras $L_{5,8}\oplus\R$ and $L_{6,25}$ fail condition (a).

\item Algebra $L_{6,26}$ and algebras of the form $\h\oplus \R^2$, where $\dim\h=4$, fail condition~(b).
(Note that $\h_3\oplus\h_3$ is isomorphic to the algebra $L_{6,22}(1)$).
% use $x_1-x_4,x_1+x_4,x_2+x_3,x_5+x_6,x_5-x_6$ ; $[x_1-x_4,x_2+x_3]=x_5+x_6$ $[x_1+x_4,x_2-x_3]=x_5-x_6$

\item Algebras $L_{5,i}\oplus\R$ for $i=6,7,9$, and 
$L_{6,i}$ for $i=14,15,16,17,18$, and $L_{6,21}(\epsilon)$ for $\epsilon\in\{-1,0,1\}$, fail condition (c).

\item Algebras $L_{6,19}(\pm1)$ and $L_{6,20}$ fail condition (d).
\end{itemize}
\end{proof}

\begin{table}
\begin{tabular}{ c | c | c | c | c | c }
  \hline
Algebra & Relations & $H$ basis & $V$ basis & $\bH$& $\V$    \\\hline
$L_{5,4}\oplus\R$ & $[x_1,x_2]=x_5,[x_3,x_4]=x_5$ & $x_1,x_2,x_5+x_6$ & $x_3,x_4,x_5-x_6$ & $\R\oplus \h_3$ & $\h_3$  \\\hline
$L_{5,5}\oplus\R$ & $[x_1,x_2]=x_3,[x_1,x_3]=x_5,[x_2,x_4]=x_5$ & $x_1,x_3,x_5+x_6$ & $x_2,x_4,x_6$ & $\R\oplus \h_3$ & $\h_3$  \\\hline
$L_{6,10}$ & $[x_1,x_2]=x_3,[x_1,x_3]=x_6,[x_4,x_5]=x_6$ & $x_1,x_2,x_6$ & $x_3,x_4,x_5$ & $\f_4$ & $\R^3$  \\\hline
$L_{6,11}$ & $[x_1,x_2]=x_3,[x_1,x_3]=x_4,[x_1,x_4]=x_6,$ & $x_1,x_3,x_6$ & $x_2,x_4,x_5$ & $\f_4$ & $\R^3$  \\
 & $[x_2,x_3]=x_6,[x_2,x_5]=x_6$ &  &  & &   \\\hline
$L_{6,12}$ & $[x_1,x_2]=x_3,[x_1,x_3]=x_4,$ & $x_1,x_3,x_6$ & $x_2,x_4,x_5$ & $\f_4$ & $\R^3$  \\
 & $[x_1,x_4]=x_6,[x_2,x_5]=x_6$ &  &  &  &   \\\hline
$L_{6,13}$ & $[x_1,x_2]=x_3,[x_1,x_3]=x_5,[x_2,x_4]=x_5,$ & $x_1,x_3,x_6$ & $x_2,x_4,x_5+x_6$ & $\f_4$ & $\h_3$  \\
 & $[x_1,x_5]=x_6,[x_3,x_4]=x_6$ &  &  &  &   \\\hline
$L_{6,19}(0)$ & $[x_1,x_2]=x_4,[x_1,x_3]=x_5,[x_2,x_4]=x_6$ & $x_2,x_4,x_5$ & $x_1,x_3,x_6$ & $\R\oplus\h_3$ & $\R^3$  \\\hline
$L_{6,22}(\epsilon)$ & $[x_1,x_2]=x_5,  [x_1,x_3]=x_6,$ & $x_1,x_2,x_6$ & $x_3,x_4,x_5+x_6$ & $\R\oplus\h_3$ & $\h_3$  \\
$\epsilon=0, \pm 1$ & $ [x_2,x_4]=\epsilon x_6,  [x_3,x_4]=x_5$ & &  &  &   \\\hline
$L_{6,23}$ & $[x_1,x_2]=x_3,[x_1,x_3]=x_5,$ & $x_1,x_3,x_6$ & $x_2,x_4,x_5+x_6$ & $\R\oplus\h_3$ & $\h_3$  \\ 
& $[x_1,x_4]=x_6,[x_2,x_4]=x_5$ &  &  &  &   \\ \hline
$L_{6,24}(\epsilon)$  & $[x_1,x_2]=x_3,[x_1,x_3]=x_5,[x_1,x_4]=\epsilon x_6,$ & $x_1,x_3,x_6$ & $x_2,x_4,x_5+x_6$ & $\R\oplus\h_3$ & $\h_3$  \\  
$\epsilon=0, \pm 1$  & $[x_2,x_3]=x_6,[x_2,x_4]=x_5$ &  &  &  &   \\  \hline
\end{tabular}
\bigskip
\caption{The 14 algebras of Theorem \ref{T:main}}\label{TableV3}
 \end{table}

\begin{proof}[Proof of Corollary \ref{cor}] Applying Lemma~\ref{L:JI}\eqref{L:H} to $H$ and $V$,
we have $\dim H=\dim V=3$.
As above, choose a basis $\{h_1,h_2,h_3\}$ for $H$ and $\{v_1,v_2,v_3\}$ for $V$ such that $\pi_H[h_1,h_2]=v_3$ and $\pi_H[v_1,v_2]=h_3$.
Since $H$ respects $V$, Lemma~\ref{L:VHV6} gives $\bH\cong \R\oplus \h_3$. 
Hence Lemma~\ref{L:HRh} gives  $Z(\g)=\Span(h_3,v_3)$. As $(H,V)$ is mutually respectful, $[h,v]=0$ for all $h\in H, v\in V$. So the only nontrivial relations are
\[
[h_1,h_2]=ah_3+v_3,\qquad [v_1,v_2]=h_3+bv_3,
\]
for some $a,b\in \R$. If $ab=1$, then obviously $\g\cong \R\oplus \h_5$. If $ab\not=1$, then the elements $ah_3+v_3,h_3+bv_3$ are linearly independent and it follows that $\g\cong \h_3\oplus \h_3$.
\end{proof}

%%%%%%%%%%%%%%%%%%%%%%%%%%%%%%%%%%%%%%%%%%%%%%%%%%%%
\bibliographystyle{amsplain}

\begin{thebibliography}{}


\bibitem{C}
Grant Cairns, \emph{A general description of totally 
geodesic foliations},
T\^ohoku Math. J.
\textbf{38} (1986),  37--55.

\bibitem{CHGN}
Grant Cairns, Ana Hini\'c Gali\'c, and Yuri Nikolayevsky, \emph{Totally
  geodesic subalgebras of nilpotent Lie algebras}, J. Lie Theory \textbf{23} (2013), no.~4, 1023--1049.
  
\bibitem{CHGN2}
\bysame, \emph{Totally
  geodesic subalgebras of filiform nilpotent Lie algebras}, J. Lie Theory \textbf{23} (2013), no.~4, 1051--1074.

\bibitem{CHNT}
Grant Cairns, Ana Hini\'c Gali\'c, Yuri Nikolayevsky and Ioannis Tsartsaflis,
\emph{Geodesic bases for Lie algebras},
Linear Multilinear Algebra \textbf{63} (2015),  no.~6, 1176--1194.

\bibitem{CLNN}
Grant Cairns, Nguyen Thanh Tung Le, Anthony Nielsen and Yuri Nikolayevsky, \emph{On the existence of orthonormal geodesic bases for Lie algebras},  Note di Matematica  \textbf{33} (2013), no.~2, 11--18.



\bibitem{CM}{G. Cairns and P. Molino}, \emph{Weakly involutive totally geodesic distributions of constant rank},   
in {Essays on geometry and related topics, {V}ol. 1},
{Monogr. Enseign. Math.}
  Vol. 38,
  177--204,
 Enseignement Math., Geneva,
  2001.


\bibitem{deG}{Willem A. de Graaf},
\emph{Classification of 6-dimensional nilpotent Lie algebras over fields of characteristic not 2},
  {Linear Algebra Appl.}
 \textbf{309} (2007),
    no.~2, 640--653.

 \bibitem{Du}
Zden{\v{e}}k Du{\v{s}}ek, \emph{The existence of homogeneous geodesics in
  homogeneous pseudo-{R}iemannian and affine manifolds}, J. Geom. Phys.
  \textbf{60} (2010), no.~5, 687--689.

\bibitem{Ebe}
Patrick Eberlein,
\emph{Geometry of 2-step nilpotent groups with a left invariant metric. II}, Trans. Amer. Math. Soc., \textbf{343}(1994), no.~2, 805--828.

\bibitem{F} Edmond Fedida, 
\emph{Sur les feuilletages de Lie},
C. R. Acad. Sci. Paris S\'er. A-B {\bf 272} (1971), A999--A1001. 

\bibitem{Ka}
V.~V. Ka{\u\i}zer, \emph{Conjugate points of left-invariant metrics on {L}ie
  groups}, Soviet Math. (Iz. VUZ) \textbf{34} (1990), no.~11, 32--44.


\bibitem{KS}
Old{\v{r}}ich Kowalski and J{\'a}nos Szenthe, \emph{On the existence of
  homogeneous geodesics in homogeneous {R}iemannian manifolds}, Geom. Dedicata
  \textbf{81} (2000), no.~1-3, 209--214, Erratum, Geom. Dedicata, {\bf 84}
  (2001), {no. 1-3}, {331--332}.

\bibitem{M} Pierre Molino,
\emph{Riemannian Foliations},
Birkh\"auser, 1988.


\bibitem{ML} Miguel-C. Mu\~noz-Lecanda, 
\emph{On some aspects of the geometry of non integrable distributions and applications}, 
J. Geom. Mech.  \textbf{10} (2018), no.~4, 445--465. 

\end{thebibliography}
{}

\end{document}